\theoremstyle{plain} {
%\swapnumbers
  \newtheorem{thm}{Theorem}[section]
  \newtheorem{defn}[thm]{Definition}
  \newtheorem{cor}[thm]{Corollary}
  \newtheorem{lem}[thm]{Lemma}
  \newtheorem{prop}[thm]{Proposition}
  \theoremstyle{definition}
  \newtheorem{rem}[thm]{Remark}
    \newtheorem{constr}[thm]{Construction}
  \theoremstyle{plain}
  \newtheorem{clm}[thm]{Claim}

}
\renewcommand{\subsubsection}{\sssection\rm}
\newcommand{\bG}{\mathbf G}
\renewcommand{\P}{\mathbb P}
\newcommand{\can}{\text{\rm can}}
\newcommand{\id}{\text{\rm id}}
\newcommand{\pr}{\text{\rm pr}}
\newcommand{\inc}{\text{\rm inc}}
\newcommand{\const}{\text{\rm const}}
\newcommand{\Spec}{\text{\rm Spec}}
\newcommand{\Aff}{\mathbf {A}}
\newcommand{\Pro}{\mathbf {P}}
\newcommand \xra {\xrightarrow }
\newcommand \hra {\hookrightarrow }
\newcommand{\ttf}{{\text{f}}}
\renewcommand{\P}{\mathbb P}
\newcommand\mydim{\text{\rm dim}}
\renewcommand \id{\operatorname{id}}
\renewcommand \phi\varphi
\newcommand{\et}{\text{\rm\'et}}
\newcommand{\ZZ}{\mathbb Z}
\begin{document}

\title{Two purity theorems and the Grothendieck--Serre's conjecture concerning
principal $G$-bundles
}

\author{Ivan Panin\footnote{The author acknowledges support of the
RNF-grant 14-11-00456.}
%RFBR grant 13-01-00429-a}
}

%\date{22.11.2012}

%\c{C}
\maketitle

\begin{abstract}
In a series of papers \cite{Pan0}, \cite{Pan1}, \cite{Pan2}, \cite{Pan3} we give a detailed and better structured
proof of the Grothendieck--Serre's conjecture
for semi-local regular rings containing a finite field. The outline of the proof
is the same as in \cite{P1},\cite{P2},\cite{P3}.
If the semi-local regular ring contains an infinite field,
then the conjecture is proved in \cite{FP}. {\it Thus the conjecture
is true for regular local rings containing a field.
}

A proof of Grothendieck--Serre conjecture on principal bundles over a semi-local regular
ring containing an arbitrary field is given in \cite{P3}. That proof is {\it heavily based}
on Theorem \ref{MainThmGeometric} stated below in the Introduction and proven in the present paper.

Theorem
\ref{MainThmGeometric}
itself is a consequence of {\it two purity theorems}
\ref{TheoremA}
and
\ref{TheoremA1}
which are of completely independent interest and which are
proven below in the present paper.
For the case when the semi-local regular
ring contains an infinite field those purity theorems
are proved in \cite{Pa2}.

%%%The proof of theorem 1.0.2 does not use any kind of norm principle. This is why the result
%%%is so general.

%The geometric part of paper \cite{PSV}
%and the main result of
%\cite{C-T-S}
%are used significantly in proofs of those two purity theorems.
%One of that purity result is this.
%Let $\mathcal O$ be a semi-local ring of finitely many closed points
%on a $k$-smooth irreducible affine scheme, where $k$ is infinite.
%Given a smooth $\mathcal O$-group scheme morphism
%$\mu: G \to C$ of reductive
%$\mathcal O$-group schemes, with a torus $C$
%one can form a functor
%from $\mathcal O$-algebras to abelian groups
%$S \mapsto {\cal F}(S):=C(S)/\mu(G(S))$.
%Assuming additionally that the kernel
%$ker(\mu)$ of $\mu$
%is a reductive $\mathcal O$-group scheme,
%we prove that this functor satisfies
%a purity theorem for $\mathcal O$.

%%%Examples to mentioned purity results are given at the very end of the paper.
%%%The present paper is an improved version of the preprint
%\cite{Pa}.
\end{abstract}

\section{Main results}\label{Introduction}
%\label{SectIntroduction}
Recall
\cite[Exp.~XIX, Defn.2.7]{D-G}
that an $R$-group scheme $G$ is called reductive
(respectively, semi-simple; respectively, simple), if it is affine and smooth
as an $R$-scheme and if, moreover, for each ring homomorphism
$s:R\to\Omega(s)$ to an algebraically closed field $\Omega(s)$,
its scalar extension $G_{\Omega(s)}$
is connected and reductive (respectively, semi-simple; respectively, simple) algebraic
group over $\Omega(s)$.
{\it Stress that all the groups $G_{\Omega(s)}$ are connected}.
The class of reductive group schemes
contains the class of semi-simple group schemes which in turn
contains the class of simple group schemes. This notion of a
simple $R$-group scheme coincides with the notion of a {simple semi-simple
$R$-group scheme from Demazure---Grothendieck
\cite[Exp.~XIX, Defn.~2.7 and Exp.~XXIV, 5.3]{D-G}.} {\it
Throughout the paper $R$ denotes an integral noetherian domain and $G$
denotes a reductive $R$-group scheme, unless explicitly stated
otherwise}.
\par
A well-known conjecture due to J.-P.~Serre and A.~Grothendieck
\cite[Remarque, p.31]{Se},
\cite[Remarque 3, p.26-27]{Gr1},
and
\cite[Remarque 1.11.a]{Gr2}
asserts that given a regular local ring $R$ and its field of
fractions $K$ and given a reductive group scheme $G$ over $R$ the
map
$$ H^1_{\text{\'et}}(R,G)\to H^1_{\text{\'et}}(K,G), $$
\noindent
induced by the inclusion of $R$ into $K$, has trivial kernel.

A proof of Grothendieck--Serre conjecture on principal bundles over a semi-local regular
ring containing a finite field is given in \cite{Pan3}. That proof is {\it heavily based}
on Theorem \ref{MainThmGeometric} stated below in the Introduction and proven in the present paper.

Theorem
\ref{MainThmGeometric}
itself is a consequence of {\it two purity theorems}
\ref{TheoremA}
and
\ref{TheoremA1}
which are of completely independent interest and which are
proven below in the present paper.
\begin{thm}[Theorem A]
\label{TheoremA}
%\label{PurityGeometric}
Let $k$ be a field.
Let $\mathcal O$ be the semi-local ring of finitely many closed points
on a $k$-smooth irreducible affine $k$-variety $X$.
%where $k$ is {\bf a finite field}.
Let $K=k(X)$.
Let
$$\mu: \mathbf G \to \mathbf C$$
be a smooth $\mathcal O$-morphism of reductive
$\mathcal O$-group schemes, with a torus $\mathbf C$.
%Set
%$H= ker (\mu)$ and suppose additionally that
%$H$
Suppose additionally that
the kernel of $\mu$ is a reductive $\mathcal O$-group scheme.
Then the following sequence
\begin{equation}
\label{Aus_Buks_sequence}
 \mathbf C(\mathcal O)/\mu(\mathbf G(\mathcal O)) \to
\mathbf C(K)/\mu(\mathbf G(K)) \xrightarrow{\sum res_{\mathfrak p}} \bigoplus_{\mathfrak p}
\mathbf C(K)/[\mathbf C(\mathcal O_{\mathfrak p})\cdot \mu(\mathbf G(K))]
\end{equation}
is exact,
%in the middle term%,
where $\mathfrak p$ runs over
the height $1$ primes of $\mathcal O$ and $res_{\mathfrak p}$ is the natural map (the projection to the factor group).

%Let $\mathcal O$ be a semi-local ring of finitely many closed points
%on a $k$-smooth irreducible affine scheme $X$ with an infinite field $k$.
%Let
%$$\mu: G\to C$$
%be a smooth $\mathcal O$-morphism of reductive
%$\mathcal O$-group schemes, with a torus $C$. Set
%$H= ker (\mu)$ and suppose additionally that
%$H$ is a reductive $\mathcal O$-group scheme.
%Then the functor
%$$\mathcal F: S\mapsto C(S)/\mu(G(S))$$
%defined on the category of $\mathcal O$-algebras
%satisfies purity for $\mathcal O$.
\end{thm}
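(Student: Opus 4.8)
The first arrow makes sense because $\mathcal O\subseteq K$, and the composite is trivial because $\mathbf C(\mathcal O)\subseteq\mathbf C(\mathcal O_{\mathfrak p})$ for every height $1$ prime $\mathfrak p$; so the whole content is exactness in the middle, i.e.\ the inclusion
\[
\bigcap_{\mathfrak p}\bigl[\mathbf C(\mathcal O_{\mathfrak p})\cdot\mu(\mathbf G(K))\bigr]\ \subseteq\ \mathbf C(\mathcal O)\cdot\mu(\mathbf G(K))
\]
inside $\mathbf C(K)$. This is a purity statement: a point of $\mathbf C$ over $K$ which is $\mathcal O_{\mathfrak p}$-integral modulo $\mu(\mathbf G)$ along every divisor of $\Spec\mathcal O$ is, modulo $\mu(\mathbf G(K))$, already $\mathcal O$-integral. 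I would proceed in three stages: (i) formal reductions and a cohomological rephrasing; (ii) reduction to a relative affine curve over a lower-dimensional semilocal base by a geometric presentation; (iii) an $\mathbf A^1$-moving argument on that curve. Making (ii)--(iii) unconditional over \emph{finite} base fields will be the main difficulty.

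\textbf{Stage (i).} As $\mu$ is smooth it is flat, hence an open map, so its image is an open and closed subgroup scheme of the connected scheme $\mathbf C$; thus $\mu$ is faithfully flat and $1\to\mathbf T\to\mathbf G\xrightarrow{\mu}\mathbf C\to1$ is a short exact sequence of $\mathcal O$-group schemes with $\mathbf T=\Ker\mu$ reductive. For any $\mathcal O$-algebra $A$, the fibre $\mu^{-1}(c)$ of a point $c\in\mathbf C(A)$ is a torsor under $\mathbf T_A$, and $c\mapsto[\mu^{-1}(c)]$ identifies $\mathbf C(A)/\mu(\mathbf G(A))$ with $\Ker\bigl(H^1_{\et}(A,\mathbf T)\to H^1_{\et}(A,\mathbf G)\bigr)$; moreover $[\mu^{-1}(c\,\mu(g))]=[\mu^{-1}(c)]$ for $g\in\mathbf G(A)$, since right translation by $g$ is an isomorphism of $\mathbf T$-torsors. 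A given $c\in\mathbf C(K)$ is $\mathcal O_{\mathfrak p}$-integral for all but the finitely many $\mathfrak p$ in its polar divisor $D$ ($\mathbf C$ being affine of finite type), so the hypothesis on $c$ is a condition only at the primes of $D$. Finally, each $\mathcal O_{\mathfrak p}$ ($\mathfrak p$ of height $1$) is a discrete valuation ring essentially of finite type over $k$, and over such a ring the Grothendieck--Serre conjecture is known (Nisnevich): a torsor under a reductive group scheme trivial over $K$ is trivial. Applied to $\mathbf T$ this gives $H^1_{\et}(\mathcal O_{\mathfrak p},\mathbf T)\hookrightarrow H^1_{\et}(K,\mathbf T)$, so that, after a standard twisting, the local conditions in \eqref{Aus_Buks_sequence} acquire a clean cohomological meaning and the theorem becomes: a $\mathbf T_K$-torsor trivialized by $\mathbf G$ over $K$ which extends over $\mathcal O_{\mathfrak p}$ for all $\mathfrak p\in D$ extends over $\mathcal O$ (compatibly with that trivialization).

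\textbf{Stages (ii)--(iii): the geometric core, and the obstacle.} When $k$ is infinite this is Theorem~A of \cite{Pa2}; the new content here is to run the same outline over an arbitrary field. Spread $\mathcal O=\mathcal O_{X,\{x_1,\dots,x_s\}}$ and the datum $(\mathbf G,\mathbf C,\mu,c)$ out to a shrinking of $X$, and apply a Gabber--Quillen type geometric presentation lemma --- in the finite-field version developed in this series --- to present a shrinking of $X$ as a smooth relative affine curve $q\colon\mathcal X\to S$ over the semilocalization $S$ of $\mathbf A^{\dim X-1}$, so that the closure $\overline D$ of the polar divisor of $c$ is finite over $S$ and disjoint from a chosen $S$-section $s_\infty$ of a relative smooth completion of $\mathcal X$. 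Then $\mu^{-1}(c)$ extends to a $\mathbf T$-torsor over $\mathcal X\smallsetminus\overline D$; using the elementary Nisnevich square given by $\mathcal X\smallsetminus\overline D$ and the semilocalization of $\mathcal X$ along $\overline D$, together with homotopy invariance of the presheaf $U\mapsto\mathbf C(U)/\mu(\mathbf G(U))$ along the fibres of $q$, one shows that this torsor agrees, modulo $\mathbf C(\mathcal O)\cdot\mu(\mathbf G(K))$, with its restriction to $s_\infty(S)$ --- which is pulled back from $S$ and so extends over $\mathcal O$, as needed. The main obstacle is to make stages (ii)--(iii) work without assuming $k$ infinite: the presentation may not use Bertini or a generic linear projection and must be replaced by its finite-field refinement, and one must verify the requisite ``good functor'' properties of $U\mapsto\mathbf C(U)/\mu(\mathbf G(U))$ --- Nisnevich excision and homotopy invariance. (Surjectivity of $\mathbf C(A)/\mu(\mathbf G(A))\twoheadrightarrow\mathbf C(A[t])/\mu(\mathbf G(A[t]))$ is formal, using $\mathbf C(A[t])=\mathbf C(A)$ for regular $A$; its injectivity, i.e.\ $\mathbf C(A)\cap\mu(\mathbf G(A[t]))=\mu(\mathbf G(A))$, reduces to injectivity of $H^1_{\et}(A,\mathbf T)\to H^1_{\et}(A[t],\mathbf T)$ for regular $A$ and the reductivity of $\mathbf T$.) An alternative for the finite-field case would be a coprime-transfer reduction to an infinite (pro-cyclic) extension of $k$, using the norm maps on $\mathbf C$ and a norm principle for $\mu$.
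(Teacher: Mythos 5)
There is a genuine gap: your Stage (i) is a routine reformulation, but the actual content of Theorem \ref{TheoremA} lives entirely in your Stages (ii)--(iii), and there you only describe a plan and explicitly defer the two points on which it hinges. First, the geometric presentation over a possibly finite $k$ is not something you may simply invoke: what is actually needed (and what the paper constructs in Theorems \ref{equating3_triples} and \ref{equating3}, via Poonen's finite-field Bertini) is a finite surjective $U$-morphism $\sigma\colon\mathcal X\to\mathbb A^1\times U$ with the delicate properties (a)--(g), including the \emph{equating} of the non-constant data $(\mathbf G,\mathbf C,\mu)$ pulled back from $X$ with the constant data pulled back from $U=\Spec(\mathcal O)$, compatibly with $\mu$; your sketch never addresses this equating step, without which the "constant-case" argument cannot be transported. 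Second, your moving argument rests on unverified "good functor" properties of $\mathcal F(S)=\mathbf C(S)/\mu(\mathbf G(S))$ (Nisnevich excision, homotopy invariance along the fibres of $q$, and an implicit descent of the conclusion from the relative curve to the base $S$). The paper deliberately avoids requiring $\mathcal F$ to be a presheaf with transfers --- it stresses that the norm principle for $\mu$ is an open question --- and instead takes norms only on $\mathbf C$: it pushes the element forward along $\sigma$ to $\mathbb A^1_K$, proves by Lemma \ref{KeyUnramifiedness} that the normed class is $K[t]$-unramified, applies homotopy invariance over the field (Theorem \ref{HomInvNonram}) and equality of specializations at $t=0,1$ (Corollary \ref{TwoSpecializations}), and reads off the lift from the two fibres $\mathcal D_1$, $\mathcal D'_0$. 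None of these steps appears, even in outline, in your proposal, and your closing "alternative" (a coprime-transfer reduction using a norm principle for $\mu$) would require precisely the statement the paper singles out as unknown, so it cannot serve as a fallback.

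In short, what you have is a correct identification of the shape of the problem (purity for $\mathcal F$, reduction to the middle term, the DVR input from Theorem \ref{NisnevichCor}), together with an honest list of the obstacles; but the finite-field presentation with group-scheme equating, the unramifiedness-of-norms lemma, and the homotopy-invariance-plus-specialization mechanism are exactly the proof, and they are missing.
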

%In this Section we prove another purity theorem for reductive group schemes.
%Let $k$ be an infinite field.
Let $\mathcal O$ and $K$ be as in Theorem \ref{TheoremA}.
%the semi-local ring of finitely many {\bf closed} points
%on a $k$-smooth irreducible $k$-variety $X$ and let $K$ be its field of fractions of $\mathcal O$.
Let $G$ be a semi-simple $\mathcal O$-group scheme.
Let
$i: Z \hra G$ be a closed subgroup scheme of the center $Cent(G)$.
{\it It is known that $Z$ is of multiplicative type}.
Let $G'=G/Z$ be the factor group,
$\pi: G \to G'$ be the projection.
It is known that $\pi$ is finite surjective and strictly flat. Thus
the sequence of $\mathcal O$-group schemes
\begin{equation}
\label{ZandGndGprime}
\{1\} \to Z \xra{i} G \xra{\pi} G^{\prime} \to \{1\}
\end{equation}
induces an exact sequence of group sheaves in $\text{fppt}$-topology.
Thus for every $\mathcal O$-algebra $R$ the sequence
(\ref{ZandGndGprime})
gives rise to a boundary operator
\begin{equation}
\label{boundary}
\delta_{\pi,R}: G'(R) \to \textrm{H}^1_{\text{fppt}}(R,Z)
\end{equation}
One can check that it is a group homomorphism
(compare \cite[Ch.II, \S 5.6, Cor.2]{Se}).
Set
\begin{equation}
\label{AnotherFunctor}
{\cal F}(R)= \textrm{H}^1_{\text{fttp}}(R,Z)/ Im(\delta_{\pi,R}).
\end{equation}

\begin{thm}
\label{TheoremA1}
Let $k$, $\mathcal O$ and $K$ be as in Theorem \ref{TheoremA}.
The following sequence
\begin{equation}
\label{Aus_Buks_sequence_2}
 \frac{\textrm{H}^1_{\text{fttp}}(\mathcal O,Z)}{ Im(\delta_{\pi,\mathcal O})} \to
\frac{\textrm{H}^1_{\text{fttp}}(K,Z)}{Im(\delta_{\pi,K})}  \xrightarrow{\sum res_{\mathfrak p}} \bigoplus_{\mathfrak p}
\frac{\textrm{H}^1_{\text{fttp}}(K,Z)}{[\textrm{H}^1_{\text{fttp}}(\mathcal O,Z)+ Im(\delta_{\pi,K})]}
\end{equation}
is exact.
\end{thm}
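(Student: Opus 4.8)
\emph{Plan of proof.} The strategy is to deduce Theorem~\ref{TheoremA1} from Theorem~\ref{TheoremA} by choosing a quasi-trivial (``flasque'') resolution of the finite $\mathcal O$-group scheme $Z$ of multiplicative type, in the spirit of Colliot-Th\'el\`ene--Sansuc. Since $G$ is semi-simple, $Cent(G)$, hence $Z$, is finite of multiplicative type, and since $\mathcal O$ is a connected regular semi-local ring, $Z$ is isotrivial; it is therefore well known that $Z$ fits into a short exact sequence of $\mathcal O$-group schemes
\[
 1 \to Z \xrightarrow{\ j\ } T_1 \xrightarrow{\ p\ } T_2 \to 1 ,
\]
with $T_1$ a quasi-trivial torus --- a finite product of Weil restrictions $\mathrm{R}_{L/\mathcal O}\mathbb G_m$ along finite \'etale $\mathcal O$-algebras $L$, obtained by dualizing a surjection of a finite free module over the relevant group ring onto the character module $\widehat Z$ --- and with $T_2:=T_1/Z$ again a torus, because dually $\widehat{T_2}=\ker(\widehat{T_1}\to\widehat Z)$ is a submodule of the finitely generated free module $\widehat{T_1}$. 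The point of taking $T_1$ quasi-trivial is that $H^1_{\mathrm{fppf}}(R,T_1)=0$ for every semi-local ring $R$ over $\mathcal O$: being the cohomology of a smooth group scheme it coincides with \'etale cohomology, and by Shapiro's lemma and Hilbert~$90$ it is controlled by the Picard groups of the finite \'etale $R$-algebras $L\otimes_{\mathcal O}R$, which vanish. This applies in particular to $R\in\{\mathcal O,\,K,\,\mathcal O_{\mathfrak p}\}$.

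Next I set $\mathbf G:=(G\times_{\mathcal O}T_1)/Z$, the fppf quotient by the central finite flat closed subgroup scheme $Z$ embedded diagonally as $z\mapsto(i(z),j(z))$; I put $\mathbf C:=T_2$, and let $\mu\colon\mathbf G\to\mathbf C$ be the homomorphism induced by $(g,t)\mapsto p(t)$. By the quotient theorems of SGA~$3$, $\mathbf G$ is a smooth affine $\mathcal O$-group scheme, and over each algebraically closed point it is the quotient of a connected reductive group by a central subgroup of multiplicative type, hence connected reductive; so $\mathbf G$ is reductive over $\mathcal O$. Base-changing $\mu$ along the faithfully flat morphism $p\colon T_1\to T_2$ turns it into the projection $G\times T_1\to T_1$, whence $\mu$ is smooth and surjective, and its kernel is the image of the closed immersion $g\mapsto(g,1)\bmod Z$, which is isomorphic to $G$ and in particular reductive. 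Thus $(\mathbf G,\mathbf C,\mu)$ satisfies all the hypotheses of Theorem~\ref{TheoremA}.

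It then suffices to identify, functorially in $R\in\{\mathcal O,\,K,\,\mathcal O_{\mathfrak p}\}$, the group $\mathbf C(R)/\mu(\mathbf G(R))$ with $\mathcal F(R)=H^1_{\mathrm{fppf}}(R,Z)/\mathrm{Im}(\delta_{\pi,R})$, compatibly with restriction maps and projections; granting this, the sequence (\ref{Aus_Buks_sequence_2}) becomes the sequence (\ref{Aus_Buks_sequence}) for the triple $(\mathbf G,\mathbf C,\mu)$, and its exactness follows from Theorem~\ref{TheoremA}. From the resolution of $Z$ and the vanishing $H^1_{\mathrm{fppf}}(R,T_1)=0$, the connecting map $\delta\colon\mathbf C(R)=T_2(R)\to H^1_{\mathrm{fppf}}(R,Z)$ is surjective with kernel $p(T_1(R))$. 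The closed immersion $\lambda\colon T_1\hookrightarrow\mathbf G$, $t\mapsto(1,t)\bmod Z$, together with $p$ and $\mathrm{id}_{T_2}$, is a morphism of short exact sequences from $1\to Z\to T_1\xrightarrow{p}T_2\to1$ to $1\to G\to\mathbf G\xrightarrow{\mu}T_2\to1$, and on $Z$ it is the homomorphism $z\mapsto i(z)^{-1}$. Naturality of the connecting maps gives $\delta_\mu=\varphi\circ\delta$, where $\delta_\mu$ is the connecting map of the second sequence and $\varphi\colon H^1_{\mathrm{fppf}}(R,Z)\to H^1_{\mathrm{fppf}}(R,G)$ is induced by $z\mapsto i(z)^{-1}$; since $\mu(\mathbf G(R))$ is the preimage under $\delta_\mu$ of the distinguished point, we get $\mu(\mathbf G(R))=\delta^{-1}\big(\varphi^{-1}(\mathrm{pt})\big)$. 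Now $\varphi^{-1}(\mathrm{pt})$ consists of the inverses of the elements of $\ker(i_{\ast})$, where $i_{\ast}\colon H^1_{\mathrm{fppf}}(R,Z)\to H^1_{\mathrm{fppf}}(R,G)$ is induced by $i$, and $\ker(i_{\ast})=\mathrm{Im}(\delta_{\pi,R})$ by exactness of the cohomology sequence of (\ref{ZandGndGprime}); since $\delta_{\pi,R}$ is a group homomorphism (as recalled after (\ref{boundary})), $\mathrm{Im}(\delta_{\pi,R})$ is a subgroup of the abelian group $H^1_{\mathrm{fppf}}(R,Z)$, hence stable under inversion. Therefore $\mu(\mathbf G(R))=\delta^{-1}\big(\mathrm{Im}(\delta_{\pi,R})\big)$, and $\delta$ induces the required natural isomorphism $\mathbf C(R)/\mu(\mathbf G(R))\xrightarrow{\ \sim\ }\mathcal F(R)$. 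A parallel bookkeeping, using $p(T_1(K))\subseteq\mu(\mathbf G(K))$ and $\delta\big(T_2(\mathcal O_{\mathfrak p})\big)=res_{\mathfrak p}\big(H^1_{\mathrm{fppf}}(\mathcal O_{\mathfrak p},Z)\big)$, identifies $\mathbf C(K)/[\mathbf C(\mathcal O_{\mathfrak p})\cdot\mu(\mathbf G(K))]$ with $H^1_{\mathrm{fppf}}(K,Z)/\big[\,res_{\mathfrak p}H^1_{\mathrm{fppf}}(\mathcal O_{\mathfrak p},Z)+\mathrm{Im}(\delta_{\pi,K})\,\big]$. The one place needing real care is this last identification, because $H^1_{\mathrm{fppf}}(R,G)$ is merely a pointed set and $G$ is non-abelian: the diagram chase must be run with preimages of the distinguished point rather than with genuine kernels, and it is exactly the facts that $\delta_{\pi,R}$ is a group homomorphism and that $\mathrm{Im}(\delta_{\pi,R})$ is the preimage of the distinguished point under $i_{\ast}$ that make the argument consistent; the verifications that $\mathbf G$ is reductive and $\mu$ smooth rely on the quotient theorems of SGA~$3$ together with faithfully flat descent.
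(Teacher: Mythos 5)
Your proof is correct and follows essentially the same route as the paper: you embed $Z$ into a quasi-trivial (permutation) torus $T^{+}$, form $G^{+}=(G\times T^{+})/Z$ with its smooth map $\mu^{+}$ onto $T=T^{+}/Z$ whose kernel is $G$, identify $\textrm{H}^1_{\text{fppf}}(R,Z)/\mathrm{Im}(\delta_{\pi,R})$ with $T(R)/\mu^{+}(G^{+}(R))$ functorially, and then invoke Theorem \ref{TheoremA}. Your diagram chase via naturality of the connecting maps is just a slightly more explicit rendering of the paper's chain of isomorphisms (\ref{F=T/mu_+(G_+)}), so no further comment is needed.
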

These two theorems
yield the following result.
\begin{thm}\label{MainThmGeometric}
Let $k$ be a field.
Let $\mathcal O$ be the semi-local ring of finitely many closed points
on a $k$-smooth irreducible affine $k$-variety $X$.
%where $k$ is {\bf a finite field}.
Let $K=k(X)$.
Assume that
%for any irreducible $k$-smooth affine variety $X$ and any
%finite family of its closed points $x_1, x_2,\dots, x_n$ and the semi-local $k$-algebra
%$\mathcal O:= \mathcal O_{X,x_1, x_2,\dots, x_n}$
%and
for all semi-simple simply connected reductive $\mathcal O$-group schemes $H$
the pointed set map
$$H^1_{\text{\rm\'et}}(\mathcal O, H) \to H^1_{\text{\rm\'et}}(k(X), H),$$
\noindent
induced by the inclusion of $\mathcal O$ into its fraction field $k(X)$, has trivial kernel.
Then for
%any regular semi-local domain $\mathcal O$ of the form
%$\mathcal O_{X,x_1, x_2,\dots, x_n}$ above
%and
any reductive $\mathcal O$-group scheme $G$
the pointed set map
$$H^1_{\text{\rm\'et}}(\mathcal O, G) \to H^1_{\text{\rm\'et}}(K, G),$$
\noindent
induced by the inclusion of $\mathcal O$ into its fraction field $K$, has trivial kernel.
\end{thm}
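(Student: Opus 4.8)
\emph{Proof plan.} The strategy is a two-step dévissage through the class of semisimple group schemes. Step~1 deduces from the hypothesis (which concerns only semisimple \emph{simply connected} groups) that $H^1_{\text{\rm\'et}}(\mathcal O,G')\to H^1_{\text{\rm\'et}}(K,G')$ has trivial kernel for \emph{every} semisimple $\mathcal O$-group scheme $G'$, using Theorem~\ref{TheoremA1}; Step~2 then deduces the statement for arbitrary reductive $G$ from the semisimple case, using Theorem~\ref{TheoremA}. Besides the two purity theorems the only inputs are standard: (a)~the Grothendieck--Serre conjecture for \emph{tori} over regular semilocal domains; (b)~the conjecture over the discrete valuation rings $\mathcal O_{\mathfrak p}$ (Nisnevich); and (c)~injectivity of $H^2_{\mathrm{fppf}}(\mathcal O,Z)\to H^2_{\mathrm{fppf}}(K,Z)$ for a finite $\mathcal O$-group scheme $Z$ of multiplicative type, which follows from (a) together with $\mathrm{Pic}(\mathcal O)=0$ and $\mathrm{Br}(\mathcal O)\hookrightarrow\mathrm{Br}(K)$ after embedding $Z$ into a quasi-trivial torus. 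Throughout I use two elementary identifications of pointed sets, natural in the $\mathcal O$-algebra $R$: for $\mu:\mathbf G\to\mathbf C$ as in Theorem~\ref{TheoremA}, with kernel $\mathbf N$, the connecting map gives $\mathbf C(R)/\mu(\mathbf G(R))\cong\ker\bigl(H^1(R,\mathbf N)\to H^1(R,\mathbf G)\bigr)$; and for $\pi:G\to G'$ with central kernel $Z$ as in Theorem~\ref{TheoremA1}, one has $\mathcal F(R)=H^1(R,Z)/\mathrm{Im}(\delta_{\pi,R})\cong\ker\bigl(H^1(R,G)\to H^1(R,G')\bigr)$ (injectivity in both cases uses that $\mathbf C$, resp.\ $Z$, is abelian).

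\emph{Step~1.} Let $G$ be semisimple, $\pi:\widetilde G\to G$ its simply connected central cover, $Z=\ker\pi$, and $\xi\in\ker\bigl(H^1(\mathcal O,G)\to H^1(K,G)\bigr)$. The obstruction to lifting $\xi$ along $\pi$ lies in $H^2_{\mathrm{fppf}}(\mathcal O,Z)$ and restricts over $K$ to the (vanishing) obstruction for $\xi_K=*$, hence vanishes by (c); so $\xi=\pi_*\widetilde\xi$ for some $\widetilde\xi\in H^1(\mathcal O,\widetilde G)$. Then $\widetilde\xi_K$ defines a class in $\ker\bigl(H^1(K,\widetilde G)\to H^1(K,G)\bigr)\cong\mathcal F(K)$; by (b) we have $\xi_{\mathcal O_{\mathfrak p}}=*$ for every height one prime $\mathfrak p$, so $\widetilde\xi_{\mathcal O_{\mathfrak p}}$ represents a class of $\mathcal F(\mathcal O_{\mathfrak p})$ restricting to it, i.e.\ the class of $\widetilde\xi_K$ is annihilated by every $res_{\mathfrak p}$ in (\ref{Aus_Buks_sequence_2}). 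By Theorem~\ref{TheoremA1} it comes from $\mathcal F(\mathcal O)\cong\ker\bigl(H^1(\mathcal O,\widetilde G)\to H^1(\mathcal O,G)\bigr)$: there is $\widetilde\eta_0\in H^1(\mathcal O,\widetilde G)$ with $\pi_*\widetilde\eta_0=*$ and $(\widetilde\eta_0)_K=\widetilde\xi_K$. Now twist by $\widetilde\eta_0$: since $\pi_*\widetilde\eta_0$ is trivial, ${}_{\widetilde\eta_0}G\cong G$, and the twisting bijection $H^1(-,\widetilde G)\xrightarrow{\sim}H^1(-,{}_{\widetilde\eta_0}\widetilde G)$ carries $\widetilde\xi$ to a class $\widetilde\xi^{\#}$ which is still a lift of $\xi$ but whose restriction over $K$ is the twist of $\widetilde\xi_K$ by $(\widetilde\eta_0)_K=\widetilde\xi_K$, hence $\widetilde\xi^{\#}_K=*$. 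As ${}_{\widetilde\eta_0}\widetilde G$ is again semisimple simply connected, the hypothesis gives $\widetilde\xi^{\#}=*$, whence $\xi=\pi_*\widetilde\xi^{\#}=*$.

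\emph{Step~2.} Let $G$ be reductive, $N=\mathcal D(G)$ its derived group (semisimple), $\mathbf C=G/N$ the associated torus, $\mu:G\to\mathbf C$ the quotient; $\mu$ is smooth and $\ker\mu=N$ is reductive, so Theorem~\ref{TheoremA} applies to $\mu$. Let $\xi\in\ker\bigl(H^1(\mathcal O,G)\to H^1(K,G)\bigr)$. Its image in $H^1(\mathcal O,\mathbf C)$ lies in $\ker\bigl(H^1(\mathcal O,\mathbf C)\to H^1(K,\mathbf C)\bigr)=\{*\}$ by (a), so $\xi=i_*\eta$ for some $\eta\in H^1(\mathcal O,N)$, $i:N\hookrightarrow G$. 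Then $\eta_K\in\ker\bigl(H^1(K,N)\to H^1(K,G)\bigr)$, and by (b), $\xi_{\mathcal O_{\mathfrak p}}=*$ gives $\eta_{\mathcal O_{\mathfrak p}}\in\ker\bigl(H^1(\mathcal O_{\mathfrak p},N)\to H^1(\mathcal O_{\mathfrak p},G)\bigr)$, which under the identification above says precisely that the class of $\eta_K$ in $\mathbf C(K)/\mu(G(K))$ is annihilated by every $res_{\mathfrak p}$ in (\ref{Aus_Buks_sequence}). By Theorem~\ref{TheoremA} it comes from $\mathbf C(\mathcal O)/\mu(G(\mathcal O))\cong\ker\bigl(H^1(\mathcal O,N)\to H^1(\mathcal O,G)\bigr)$: there is $\eta_0\in H^1(\mathcal O,N)$ with $i_*\eta_0=*$ and $(\eta_0)_K=\eta_K$. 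Twisting by $\eta_0$ (again ${}_{\eta_0}G\cong G$, since $i_*\eta_0=*$) carries $\eta$ to a lift $\eta^{\#}\in H^1(\mathcal O,{}_{\eta_0}N)$ of $\xi$ with $\eta^{\#}_K=*$. Since ${}_{\eta_0}N$ is a semisimple $\mathcal O$-group scheme, Step~1 gives $\eta^{\#}=*$, hence $\xi=*$; this is the desired conclusion.

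\emph{Where the difficulty sits.} All the geometric content is carried by the purity theorems~\ref{TheoremA} and~\ref{TheoremA1}, which are given; within the reduction itself the delicate, non-formal part is the non-abelian cohomological bookkeeping. One must make precise and functorial the pointed-set identifications of $\mathbf C(R)/\mu(\mathbf G(R))$ and of $\mathcal F(R)$ with the kernels $\ker(H^1(N)\to H^1(G))$ (in particular the injectivity, which rests on the abelianness of $\mathbf C$, resp.\ $Z$), verify that the third terms of (\ref{Aus_Buks_sequence}) and (\ref{Aus_Buks_sequence_2}) genuinely measure the failure of the relevant ``unramifiedness'', and check that the twist by $\eta_0$ (resp.\ $\widetilde\eta_0$) has the asserted effect \emph{simultaneously} over $\mathcal O$, over $K$, and over all the $\mathcal O_{\mathfrak p}$. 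The single non-formal external ingredient, beyond purity and standard torus/Brauer facts, is the Grothendieck--Serre conjecture over discrete valuation rings, invoked once in each step to guarantee $\xi_{\mathcal O_{\mathfrak p}}=*$.
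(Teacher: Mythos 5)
Your proposal is correct and follows essentially the same route as the paper: the two-step d\'evissage announced in Remark 1.5 (semisimple case from the simply connected cover via Theorem \ref{TheoremA1}, then reductive case from the derived group via Theorem \ref{TheoremA}), with the same auxiliary inputs (Colliot-Th\'el\`ene--Sansuc for tori, Nisnevich over the $\mathcal O_{\mathfrak p}$, injectivity of $H^2_{\mathrm{fppf}}(-,Z)$ via an embedding of $Z$ into a quasi-trivial torus). Your twisting by $\widetilde\eta_0$, resp.\ $\eta_0$, is exactly the ``inner forms of $G^{sc}$, resp.\ of $G_{der}$'' maneuver that the paper packages in Claims \ref{simplyconnectedsemisimpleinjective} and \ref{semisimpleinjective} and otherwise delegates to \cite[Sect.~11]{Pa2}, so your write-up reconstructs the intended argument rather than replacing it.
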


\begin{rem}
The proof of the latter theorem is subdivided into two steps.
Firstly, given a semi-simle
$\mathcal O$-group scheme $G$
we prove that the Grothendieck--Serre conjecture holds for $\mathcal O$-group scheme $G$
providing it holds for its simply-connected cover
$G^{sc}$ and all inner forms of that
simply-connected
$\mathcal O$-group scheme
$G^{sc}$.

Secondly, given a reductive $\mathcal O$-group scheme $G$
we prove that
the Grothendieck--Serre conjecture holds for $\mathcal O$-group scheme $G$
providing it holds for
the derived $\mathcal O$-group scheme $G_{der}$ of $G$
and for all inner forms of that
derived $\mathcal O$-group scheme $G_{der}$ of $G$.
\end{rem}

%To state our purity Theorems recall certain notions.
%Let  ${\cal F}$ be  a covariant functor from the category of
%commutative noetherian $R$-algebras to the category
%of abelian groups. For any commutative noetherian $R$-algebra $S$ which is a domain consider  the sequence
%$${\cal F}(S)\to {\cal F}(L)\to \bigoplus_{\mathfrak p}\mathcal F(L)/{\cal F}(S_{\mathfrak p})$$
%where $\mathfrak p$ runs over
%the height $1$ primes of $S$ and
%$L$ is the field of fractions of $S$.
%We say that
%{\it $\mathcal F$ satisfies purity for $S$} if this sequence --- which is
%clearly a complex --- is exact. The purity for the commutative noetherian $R$-algebra $S$ is equivalent to the following
%property
%$$
%\bigcap_{\textrm{ht} \mathfrak p=1} Im[{\cal F}(S_{\mathfrak p}) \to {\cal F}(L)]=
%Im[{\cal F}(S) \to {\cal F}(L)].
%$$

%Another functor satisfying purity is described by the formulae
%(\ref{AnotherFunctor}) in Section
%\ref{SectionOneMorePurityTheorem}.
%The respecting purity result is Theorem \ref{PurityForSubgroup}.

After the first articles
\cite{C-T/P/S}
and
\cite{R}
on purity theorems for algebraic groups,
various versions of purity theorems were proved in
\cite{C-TO},
\cite{PS},
\cite{Z},
\cite{Pan2}.
The most general result in the so called constant case was given in
\cite[Exm.3.3]{Z}. This result follows now from our Theorem (A) by
 taking $G$ to be a $k$-rational reductive group, $C=\mathbb G_{m,k}$
and
$\mu: G \to \mathbb G_{m,k}$ a dominant $k$-group morphism. The papers
\cite{PS},
\cite{Z},
\cite{Pan2}
contain results for the nonconstant case. However they only consider specific examples
of algebraic scheme morphisms $\mu: G \to C$.

It seems plausible to expect a purity theorem in the following context.
Let $R$ be a regular local ring. Let
$\mu: G \to T$ be a smooth morphism of reductive $R$-group schemes
with an $R$-torus $T$.
Let  ${\cal F}$ be the covariant functor from the category of
commutative rings to the category of abelian groups
given by
$S \mapsto T(S)/\mu(G(S))$. Then ${\cal F}$ should satisfies purity for $R$.

Let us to point out that
we use transfers for the functor
$R \mapsto C(R)$, but {\it we do not use at all the norm principle}
for the homomorphism $\mu: G \to C$.
It is a big open question: whether the morphism $\mu$ from Theorem
\ref{TheoremA}
satisfies the norm principle even for finite separable field extensions.
So, we are not able to say that the functor $\mathcal F$ is a presheaf
with transfers in the sense of Voevodsky or in any other weaker sense. {\it That is the major trouble
for any attempt to prove Theorem
\ref{TheoremA}}.

{\it Here is our approach}.
Given an element $\xi \in C(K)$ such that its image
$\bar \xi \in \mathcal F(K)$ is an
$\mathcal O$-unramified element
%$\xi \in \mathcal F(K)$
one can find a rather refined finite correspondence of the form
\begin{equation}\label{finite correspondence_1}
\Aff^1_{\mathcal O} \xleftarrow{\sigma} \mathcal X^{\prime} \xrightarrow{q^{\prime}_X} X
\end{equation}
(see the diagram (\ref{DeformationDiagram0}))
and use it in the "constant group" case to write down a good candidate
$\bar \xi_{\mathcal O} \in \mathcal F(\mathcal O)$
(see (\ref{U_alpha}))
for a lift of the element
$\bar \xi$ to $\mathcal F(\mathcal O)$.
In general, since the group-scheme $G$ does not come from the ground field
we need to equate two its pull-backs
$(pr_{\mathcal O} \circ \sigma)^*(G)$
and
$(q^{\prime}_{X})^*(G)$
over
$\mathcal X^{\prime}$.
Here $pr_{\mathcal O}: \Aff^1_{\mathcal O} \to \text{Spec}(\mathcal O)$
is the projection.
The same we need to do with the torus $C$.
Due to these requirements our
construction of the finite correspondence
and of a good candidate
$\bar \xi_{\mathcal O} \in \mathcal F(\mathcal O)$
(see (\ref{U_alpha_2}))
is quite involved.
It is especially quite involved in the case of the finite base field.
Surely, we use Bertini type results which are due to
Poonen. Even this does not resolve all
the difficulties.
Nevertheles there is a construction of the desired finite correspondence as
for the "constant group" case, so for the general case.
It is done below in Theorem
\ref{equating3}.
%\cite[Thm.5.1]{Pan1}.
%Some recollection concerning those constructions is given
%in Section \ref{Reducing Theorem_MainHomotopy} below.

The finite surjective morphism $\sigma$ of the $\mathcal O$-schemes
has the following property: for the corresponding fraction field extension
$K(u) \subset \mathcal K$
the element
$$\zeta_u:=N_{\mathcal K/K(u)}((q^{\prime}_X)^*(\xi)) \in C(K(u))$$
is such that its image
$\bar \zeta_u \in \mathcal F(K(u))$
is $K[u]$-unramified.
The latter yields that the element
$\bar \zeta_u$ is constant that is it belongs to
$\mathcal F(K)$. Thus the evaluations of
$\bar \zeta_u$ at $u=0$ and at $u=1$ coincide.
Now simple computations show that the element
$\bar \xi_{\mathcal O} \in \mathcal F(\mathcal O)$
is indeed a lift of the element
$\bar \xi \in \mathcal F(K)$.
Details are given in Section
\ref{Section_Aus_Buksbaum}.

The article is organized as follows.
In Sections \ref{NiceTriplesAndGr_Schemes} and \ref{First Application}
geometric results from \cite{P} are used to prove stronger versions
of some results from 
\cite{Pan1}.
In Section \ref{SectNorms} the construction of norm maps is recalled.
In Section \ref{SectUnramifiedElements} groups of unramified elements are defined
and it is proved that in certain cases the norm map takes
a specific unramified element to unramified one
(see Lemma \ref{KeyUnramifiedness}).
In Section
\ref{SectSpecializationMaps}
specialization maps are defined
and a homotopy invariance theorem for the group of unramified elements is proved
(see Theorem \ref{HomInvNonram}).
In Section \ref{Section_Aus_Buksbaum}
Theorem \ref{TheoremA} is proved.
In Section \ref{OneMorePurity}
Theorem \ref{TheoremA1} is proved.
Finally, in section \ref{sec:MainThmGeometric}
Theorem \ref{MainThmGeometric} is proved.

%In Section \ref{ElementaryFibrations} elementary fibrations are discussed.
%In Section \ref{NiceTriples} the concept of a nice triple is recalled and Theorems
%\ref{ThEquatingGroups} and \ref{ElementaryNisSquareNew}
%are formulated.
%In Section \ref{SectElemNisnevichSquare} Theorem \ref{ElementaryNisSquareNew} is proved.
%In Section \ref{ProofThEquatingGroups} Theorem \ref{ThEquatingGroups} is proved.
%In Section \ref{BasicTriple} a basic nice triple is constructed.
%In Section \ref{MainConstruction} Theorem
%\ref{MainHomotopy}
%is proved.
%The derivation of Theorem~\ref{MainThm1} from Theorems~\ref{MainHomotopy} and~\ref{th:psv} is based on
%\ref{MainHomotopy} proven in the present paper and on
%\cite[Thm.3]{FP}, Proposition \ref{SchemeY} proven below in the present article and on
%\cite[Thm.1.0.1]{Pan2}.
%and~\cite[Thm.1.1]{Pan1}.
%results of the second author, A.~Stavrova, and N.~Vavilov, namely, on~\cite{Pa2} and~\cite[Theorem~1.2]{PSV}.
%Let $Y$ be a semi-local scheme. We will call a simple $Y$-group scheme
%quasi-split if its restriction to each connected component of $Y$ contains {\bf a Borel subgroup scheme}.
%The history of the topic is described in
%\cite{FP}.

The author thanks A.~Suslin for his interest in the topic of the present article. He
also thanks to A.Stavrova for paying his attention to Poonen's works on Bertini type theorems
for varieties over finite fields. He thanks D.Orlov for useful comments concerning
the weighted projective spaces tacitely involved in the construction of elementary fibrations.
He thanks M.Ojanguren for many inspiring ideas arising from our joint works with him.
%Finally, the author thanks the Referee for many valuable comments which resulted
%in the significant improvement of the final version of the present paper.

%\section{Theorems \ref{ElementaryNisSquare_1}}
%\label{Reducing Theorem_MainHomotopy}

\section{Equating group scheme morphisms}
\label{EqGroupSchemeMorph}
%The results of this section in their complete generality
%will be used in a forthcoming paper
%concerning two purity theorems and
%the reduction of the Grothendieck--Serre conjecture
%to simple simply-connected group case.

Let $S$ be a regular semi-local
{\it irreducible scheme.}
%such that the residue fields at all its closed points are {\bf finite}.
Let
$\mu_1: G_1\to C_1$ and
$\mu_2: G_2\to C_2$
be two smooth $S$-group scheme morphisms with tori $C_1$ and $C_2$.
Suppose that $G_1$ and $G_2$ are reductive $S$-group schemes
which are forms of each other and suppose that $C_1$ and $C_2$ are forms of each other.
Let $T \subset S$ be a connected non-empty closed
sub-scheme of $S$, and
$\varphi: G_1|_T \to G_2|_T$,
$\psi: C_1|_T \to C_2|_T$
be $T$-group scheme isomorphisms.
By Theorem
\cite[Thm. 3.1]{Pan1}
%{PropEquatingGroups2}
there exists a finite \'{e}tale morphism
$\tilde S \xra{\pi} S$
with an {\it irreducible scheme} $\tilde S$ and
a section $\delta: T \to \tilde S$ of $\pi$ over $T$ and $\tilde S$-group scheme isomorphisms
$$\Phi: G_{1, \tilde S} \to G_{2, \tilde S} \ \ \text{and} \
\Psi: C_{1, \tilde S} \to C_{2, \tilde S}$$
such that
$\delta^*(\Phi)= \varphi$, $\delta^*(\Psi)= \psi$.
For those $\tilde S$, $T$, $\delta$, $\Phi$ and $\Psi$
the following result holds.

\begin{thm}
\label{PropEquatingGroups_1_1_1}
Let $S$ be the regular semi-local
{\it irreducible scheme.}
Let $T \subset S$ be the connected non-empty closed
sub-scheme of $S$.
Suppose that the morphisms $\phi$ and $\psi$, $(\mu_1|_{T})$ and $(\mu_2|_{T})$ are such that
%$(\mu_2|_{T}) \circ \varphi = \psi \circ (\mu_1|_{T})$.
the diagram commutes
\begin{equation}
\label{phi_psi_mu_1_mu_2}
    \xymatrix{
     G_1|_T  \ar[rr]^{\phi} \ar[d]_{\mu_1|_T} &&
     G_2|_T \ar[d]^{\mu_2|_T}  &\\
     C_1|_T \ar[rr]^{\psi} && C_2|_T. &\\
    }
\end{equation}
Then
$\mu_{2, \tilde S} \circ \Phi = \Psi \circ \mu_{1, \tilde S} : G_{1,\tilde S} \to C_{2,\tilde S}$
as the $\tilde S$-group scheme morphisms.
\end{thm}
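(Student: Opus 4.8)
\emph{Proof sketch.} The plan is to compare the two $\tilde S$-group scheme morphisms $f:=\mu_{2,\tilde S}\circ\Phi$ and $g:=\Psi\circ\mu_{1,\tilde S}$, both going from $G_{1,\tilde S}$ to $C_{2,\tilde S}$, by a rigidity argument; the commutative square \eqref{phi_psi_mu_1_mu_2} will be used only to guarantee that $f$ and $g$ agree after restriction along the section $\delta\colon T\to\tilde S$.

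First I would record that $\delta^{*}(f)=\delta^{*}(g)$. Since $\delta$ is a section of $\pi$ over $T$, pulling the $\tilde S$-objects $G_{i,\tilde S}$, $C_{i,\tilde S}$, $\mu_{i,\tilde S}$ back along $\delta$ returns $G_i|_T$, $C_i|_T$, $\mu_i|_T$; moreover $\delta^{*}(\Phi)=\varphi$ and $\delta^{*}(\Psi)=\psi$ by the very choice of $\tilde S$, $\Phi$, $\Psi$. Hence $\delta^{*}(f)=(\mu_2|_T)\circ\varphi$ and $\delta^{*}(g)=\psi\circ(\mu_1|_T)$, and these two $T$-group scheme morphisms coincide by the commutativity of \eqref{phi_psi_mu_1_mu_2}.

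Next I would reduce to a statement about tori. As $C_2$ is a torus it is commutative, so both $f$ and $g$ are trivial on the derived subgroup scheme $\mathcal D(G_{1,\tilde S})$ of $G_{1,\tilde S}$ and therefore factor through the coradical $q\colon G_{1,\tilde S}\to\mathrm{corad}(G_{1,\tilde S})=G_{1,\tilde S}/\mathcal D(G_{1,\tilde S})$, which is an $\tilde S$-torus by \cite[Exp.~XXII]{D-G}. Writing $f=\bar f\circ q$ and $g=\bar g\circ q$, it then suffices to prove $\bar f=\bar g$ as morphisms of $\tilde S$-tori $\mathrm{corad}(G_{1,\tilde S})\to C_{2,\tilde S}$. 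By \cite[Exp.~XI]{D-G} the covariant functor on $\tilde S$-schemes sending $S'$ to the group of $S'$-group scheme homomorphisms $\mathrm{corad}(G_{1,\tilde S})_{S'}\to C_{2,S'}$ is representable by an $\tilde S$-scheme $\mathcal H$ that is separated and unramified over $\tilde S$ (even \'etale, since $\tilde S$ is regular, hence normal, so both tori are isotrivial). The morphisms $\bar f$ and $\bar g$ are then two sections $\tilde S\to\mathcal H$, and by the previous paragraph they agree after composition with $\delta$; as $T$ is nonempty, $\bar f$ and $\bar g$ take the same value at the point $\delta(t)$ for any $t\in T$.

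It remains to invoke the elementary rigidity principle: for a separated unramified morphism $Y\to\tilde S$ with $\tilde S$ connected, two sections of $Y\to\tilde S$ agreeing at a single point are equal, because the locus where they agree is open in $\tilde S$ (the diagonal of $Y\times_{\tilde S}Y$ being open, as $Y\to\tilde S$ is unramified) and closed in $\tilde S$ (as $Y\to\tilde S$ is separated), hence, being nonempty, all of the connected scheme $\tilde S$. Since $\tilde S$ is irreducible and thus connected, this yields $\bar f=\bar g$, whence $f=g$, i.e. the asserted equality $\mu_{2,\tilde S}\circ\Phi=\Psi\circ\mu_{1,\tilde S}$. The step needing genuine care is the reduction to tori together with the appeal to the representability and unramifiedness of the scheme of homomorphisms between groups of multiplicative type over a normal base; the rest is formal.
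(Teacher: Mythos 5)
Your proposal is correct and follows essentially the same route as the paper: factor both composites through the coradical torus of $G_{1,\tilde S}$ (using commutativity of $C_{2}$, respectively the canonical factorization $\mu_r=\bar\mu_r\circ can_r$ and its compatibility with $\Phi$), check agreement after pull-back along $\delta$ from the commutative square (\ref{phi_psi_mu_1_mu_2}), and conclude by rigidity for morphisms of group schemes of multiplicative type over the irreducible base $\tilde S$. The only difference is that where the paper simply invokes \cite[Prop.3.5]{Pan1} (the statement that two such morphisms agreeing over the connected nonempty closed subscheme $T$ coincide), you sketch a proof of that rigidity via the separated unramified $\underline{\mathrm{Hom}}$-scheme and the open--closed agreement locus, which is a legitimate filling-in of the cited lemma rather than a different argument.
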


\begin{proof}
%[Proof of Theorem \ref{PropEquatingGroups_1_1_1}]
%\ref{PropEquatingGroups}]
Recall that
$\mu_r$ can be naturally presented as a composition
$$G_r \xrightarrow{can_r} Corad(G_r) \xrightarrow{{\bar \mu}_r} C_r.$$
Since
$can_{2, \tilde S} \circ \Phi = Corad(\Phi) \circ can_{1, \tilde S}$
it remains to check that
${\bar \mu}_{2, \tilde S} \circ Corad(\Phi)=\Psi \circ {\bar \mu}_{1, \tilde S}$.

The equality
$(\mu_2|_{T}) \circ \varphi = \psi \circ (\mu_1|_{T})$
holds
by the assumption of the Theorem. It yields
the equality
$({\bar \mu_2}|_{T}) \circ Corad(\varphi) = \psi \circ ({\bar \mu_1}|_{T})$.
The equality
${\bar \mu}_{2, \tilde S} \circ Corad(\Phi)=\Psi \circ {\bar \mu}_{1, \tilde S}$
follows now from
\cite[Prop.3.5]{Pan1}
%Proposition
%\cite[Prop. 8.5]{Pan3}
%\ref{PropEquatingGroups5},
since $\tilde S$ is irreducible.
%and which holds since
%$(\mu_2|_{T}) \circ \varphi = \psi \circ (\mu_1|_{T})$
%by the very assumption of the Theorem.
Whence the theorem.
\end{proof}

\section{Nice triples and group scheme morphisms}
\label{NiceTriplesAndGr_Schemes}
See \cite[Defn.3.1]{PSV} for the definition of a nice triple and
see \cite[Defn.3.1]{PSV} for the definition of a morphism
between nice triples.
Those definitions are reproduced in
\cite[Defn.3.1 and 3.3]{P}.
The notion of a special nice triple
is given in \cite[Defn.3.4]{P}.
We need in an extension of theorems
\cite[Thm. 3.9]{P}, \cite[Thm. 3.9]{Pan1}.

For that it is convenient to give two definitions
under the following set up.
Let $k$ be a field and $\mathcal O$ be
{\it the semi-local ring of finitely many closed points
on a $k$-smooth irreducible affine $k$-variety $X$.
}
Let $U=\text{Spec}(\mathcal O)$.
%\ref{DefnNiceTriple}.
Let $(\mathcal X,f,\Delta)$ be a special nice triple
over $U$ and let
$G_{\mathcal X}$ be a reductive
$\mathcal X$-group scheme and
$G_U:= \Delta^*(G_{\mathcal X})$
and
$G_{\text{const}}:=q^*_U(G_U)$.
Let
$\theta: (q^{\prime}: \mathcal X^{\prime} \to U, f^{\prime}, \Delta^{\prime}) \to (q: \mathcal X \to U, f, \Delta)$
be a morphism between nice triples over $U$.
The following definition is from \cite[Defn. 4.1]{Pan1}
\begin{defn}{\rm
\label{def:q_constant}
%Let $U$ be as in Definition
%\ref{DefnNiceTriple}. Let $(\mathcal X,f,\Delta)$ be a special nice triple
%over $U$ and let
%$$\theta: (q^{\prime}: \mathcal X^{\prime} \to U, f^{\prime}, \Delta^{\prime}) \to (q: \mathcal X \to U, f, \Delta)$$
%be a morphism between nice triples over $U$.
%Let $G_{\mathcal X}$ be a reductive
%$\mathcal X$-group scheme and
%$G_U:= \Delta^*(G_{\mathcal X})$
%and
%$G_{\text{const}}:=q^*_U(\bG_U)$.
We say that
the morphism $\theta$ {\it equates}
the reductive
$\mathcal X$-group schemes
$G_{\mathcal X}$ and $G_{\text{const}}$, if
%is {\it $\theta$-constant} if
there is an
$\mathcal X^{\prime}$-group scheme isomorphism
$\Phi: \theta^*(G_{\text{const}}) \to \theta^*(G_{\mathcal X})$
with
$(\Delta^{\prime})^*(\Phi)= id_{G_U}$.
}
\end{defn}
Further, let $C_{\mathcal X}$ be an
$\mathcal X$-tori and
$C_U:= \Delta^*(C_{\mathcal X})$
and
$C_{\text{const}}:=q^*_U(\mathbf C_U)$.
Let
$\mu_{\mathcal X}: G_{\mathcal X} \to C_{\mathcal X}$
be an $\mathcal X$-group scheme morphism smooth as a scheme morphism.
Let
$\mu_U = \Delta^*(\mu_{\mathcal X})$
and
$\mu_{\const}: G_{\const} \to C_{\const}$
be the the pull-back of $\mu_U$ to
$\mathcal X$.

\begin{defn}[Equating morphisms]
\label{def:q_constant_mu}{\rm
%Let $U$ be as in Definition
%\ref{DefnNiceTriple}. Let $(\mathcal X,f,\Delta)$ be a special nice triple
%over $U$ and let
%$$\theta: (q^{\prime}: \mathcal X^{\prime} \to U, f^{\prime}, \Delta^{\prime}) \to (q: \mathcal X \to U, f, \Delta)$$
%be a morphism between nice triples over $U$.
%Further, let $C_{\mathcal X}$ be an
%$\mathcal X$-tori and
%$C_U:= \Delta^*(C_{\mathcal X})$
%and
%$C_{\text{const}}:=q^*_U(\mathbf C_U)$.
%Let
%$\mu_{\mathcal X}: G_{\mathcal X} \to C_{\mathcal X}$
%be an $\mathcal X$-group scheme morphism smooth as a scheme morphism.
%Let
%$\mu_U = \Delta^*(\mu_{\mathcal X})$
%and
%$\mu_{\const}: G_{\const} \to C_{\const}$
%be the the pull-back of $\mu_U$ to
%$\mathcal X$.
We say that
the morphism $\theta$ {\it equates}
the reductive
$\mathcal X$-group scheme morphisms
$\mu_{\const}$
and
$\mu_{\mathcal X}$
%is {\it $\theta$-constant}
if there are
$\mathcal X^{\prime}$-group schemes isomorphisms
$$\Phi: \theta^*(G_{\text{const}}) \to \theta^*(G_{\mathcal X}) \ \ \text{and} \ \ \Psi: \theta^*(C_{\text{const}}) \to \theta^*(C_{\mathcal X})$$
with
$(\Delta^{\prime})^*(\Phi)= id_{G_U}$,
$(\Delta^{\prime})^*(\Phi)= id_{G_U}$
and
%\begin{equation}
%\label{PhiAndPsiRelation}
$\theta^*(\mu_{\mathcal X}) \circ \Phi = \Psi \circ \theta^*(\mu_{const})$.
%\end{equation}
Clearly, if the morphism $\theta$ {\it equates} morphisms
$\mu_{\const}$
and
$\mu_{\mathcal X}$,
then it {\it equates}
$G_{\text{const}}$ with $G_{\mathcal X}$ and $C_{\text{const}}$ with $C_{\mathcal X}$.
}
\end{defn}

\begin{rem}\label{rem:theta_circ_theta'}
Let $\rho: (\mathcal X'',f'',\Delta'') \to (\mathcal X',f',\Delta')$ and $\theta: (\mathcal X',f',\Delta') \to (\mathcal X,f,\Delta)$
be morphisms of nice triples over $U$.
If $\theta$ equates $\mu_{\text{const}}$ with $\mu_{\mathcal X}$,
then $\theta \circ \rho$ equates
$\mu_{\text{const}}$ and $\mu_{\mathcal X}$ also.
%and $\theta'$ equates $\theta^*(\mu_{\text{const}})$ with $\theta^*(\mu_{\mathcal X})$,
%then $(\theta \circ \theta')(\mu_{\mathcal X})$ is $(\theta \circ \theta')$-constant.
\end{rem}

\begin{thm}
\label{equating3_triples}
Let $U$ be as above in this section. Let $(\mathcal X,f,\Delta)$ be a special nice triple
over $U$.
Let $G_{\mathcal X}$ be a reductive
$\mathcal X$-group scheme and
$G_U:= \Delta^*(G_{\mathcal X})$
and
$G_{\text{const}}:=q^*_U(G_U)$.
Let $C_{\mathcal X}$ be an
$\mathcal X$-tori and
$C_U:= \Delta^*(C_{\mathcal X})$
and
$C_{\text{const}}:=q^*_U(C_U)$.
Let
$\mu_{\mathcal X}: G_{\mathcal X} \to C_{\mathcal X}$
be an $\mathcal X$-group scheme morphism smooth as a scheme morphism.
Let
$\mu_U = \Delta^*(\mu_{\mathcal X})$
and
$\mu_{\const}: G_{\const} \to C_{\const}$
be the the pull-back of $\mu_U$ to
$\mathcal X$.
Then
there exist a morphism
$\theta'': (q^{\prime\prime}: \mathcal X^{\prime\prime} \to U, f^{\prime\prime}, \Delta^{\prime\prime}) \to
(q: \mathcal X \to U, f, \Delta)$
between nice triples over $U$ such that
\begin{itemize}
\item[(i)]
the morphism $\theta''$ equates the reductive $\mathcal X$-group scheme morphisms
$\mu_{\const}$ and $\mu_{\mathcal X}$;
\item[(ii)]
the triple
$(\mathcal X^{\prime\prime},f^{\prime\prime},\Delta^{\prime\prime})$
is a special nice triple
over $U$ subjecting to the conditions
$(1^*)$ and $(2^*)$ from \cite[Defn. 3.7]{P}.
\end{itemize}
\end{thm}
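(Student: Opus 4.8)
\medskip

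The plan is to build the required morphism $\theta''$ in two stages, separating the purely geometric construction of a special nice triple from the algebraic task of equating the group-scheme morphisms. First I would invoke the known extension of \cite[Thm.~3.9]{P} together with \cite[Thm.~3.9]{Pan1} to produce a morphism $\theta_0: (\mathcal X_0,f_0,\Delta_0) \to (\mathcal X,f,\Delta)$ of nice triples over $U$ such that $(\mathcal X_0,f_0,\Delta_0)$ is a special nice triple subject to conditions $(1^*)$ and $(2^*)$ of \cite[Defn.~3.7]{P}; this gives clause (ii) but not yet clause (i). The point of doing this step first is that once we are on a special nice triple $\mathcal X_0$ satisfying $(1^*)$, $(2^*)$, the scheme $\mathcal X_0$ is (essentially) smooth affine over a semi-local base and the section $\Delta_0$ lands in a nice closed subscheme, which is exactly the setting in which the equating machinery of \S\ref{EqGroupSchemeMorph} applies.

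\medskip

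For the second stage I would apply the equating argument of Theorem~\ref{PropEquatingGroups_1_1_1}, or rather its relative version adapted to nice triples from \cite[Defn.~4.1]{Pan1} and the discussion there, with $S = \mathcal X_0$ (which is irreducible, being a special nice triple) and $T = \Delta_0(U) \subset \mathcal X_0$ the closed subscheme carrying the section. On $T$ we have the canonical identifications $\theta_0^*(G_{\text{const}})|_T \cong G_U \cong \theta_0^*(G_{\mathcal X})|_T$ and likewise for $C$; under these identifications the two morphisms $\theta_0^*(\mu_{\mathcal X})$ and $\theta_0^*(\mu_{\text{const}})$ restrict to the \emph{same} morphism $\mu_U$ over $T$, so the compatibility diagram \eqref{phi_psi_mu_1_mu_2} holds over $T$ tautologically (with $\varphi = \mathrm{id}_{G_U}$, $\psi = \mathrm{id}_{C_U}$). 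Hence the cited result furnishes a finite \'etale cover $\tilde S \to \mathcal X_0$ with a section over $T$ and isomorphisms $\Phi: \theta_0^*(G_{\text{const}})_{\tilde S} \to \theta_0^*(G_{\mathcal X})_{\tilde S}$, $\Psi: \theta_0^*(C_{\text{const}})_{\tilde S} \to \theta_0^*(C_{\mathcal X})_{\tilde S}$ restricting to the identity over $T$ and satisfying $\theta_0^*(\mu_{\mathcal X}) \circ \Phi = \Psi \circ \theta_0^*(\mu_{\text{const}})$ over $\tilde S$. Here Theorem~\ref{PropEquatingGroups_1_1_1} is used precisely to get the \emph{commutativity} of the $\mu$-square on $\tilde S$ from its commutativity on $T$, via \cite[Prop.~3.5]{Pan1} and the irreducibility of $\tilde S$.

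\medskip

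The remaining work is to promote the finite \'etale cover $\tilde S$ into an honest morphism of nice triples over $U$, i.e.\ to equip $\mathcal X'' := \tilde S$ with $f''$ and $\Delta''$ so that $(\mathcal X'',f'',\Delta'')$ is again a special nice triple satisfying $(1^*)$ and $(2^*)$ and so that $\theta'' := (\text{the composite }\tilde S \to \mathcal X_0 \to \mathcal X)$ is a morphism of nice triples. One sets $\Delta'' := (\text{the section }T \to \tilde S) \circ (\Delta_0$ viewed as an iso $U \xrightarrow{\sim} T)$ and pulls back $f_0$ along $\tilde S \to \mathcal X_0$; one must then check that the defining axioms of a (special) nice triple, and conditions $(1^*)$, $(2^*)$, are stable under such finite \'etale base change respecting the section — this is where I expect the main technical obstacle to lie, since some of the conditions in \cite[Defn.~3.4, 3.7]{P} are not obviously preserved by arbitrary finite \'etale covers, and one may need to shrink $\tilde S$ or compose with a further morphism of nice triples (using Remark~\ref{rem:theta_circ_theta'}, which guarantees the equating property survives such a composition) to restore them. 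Once this bookkeeping is done, clause (i) holds for $\theta''$ by construction of $\Phi$, $\Psi$ together with $(\Delta'')^*(\Phi) = \mathrm{id}_{G_U}$ and $(\Delta'')^*(\Psi) = \mathrm{id}_{C_U}$, and clause (ii) holds by the stability check, completing the proof.
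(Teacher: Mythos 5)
There is a genuine gap at the heart of your second stage. Theorem \ref{PropEquatingGroups_1_1_1} and \cite[Thm. 3.1]{Pan1} are stated, and proved, only for a regular \emph{semi-local} irreducible scheme $S$: their proofs rest on Chinese-remainder and Poonen--Bertini arguments carried out at the finitely many closed points of $S$. You propose to apply them with $S=\mathcal X_0$, which is an affine $U$-smooth relative curve over the semi-local base $U$ and is therefore not semi-local; being ``essentially smooth over a semi-local base'' is not the hypothesis of the equating machinery. Consequently the finite \'etale cover $\tilde S\to\mathcal X_0$ with a section over $T=\Delta_0(U)$ and with \emph{global} isomorphisms $\Phi,\Psi$ satisfying $\theta_0^*(\mu_{\mathcal X})\circ\Phi=\Psi\circ\theta_0^*(\mu_{\const})$ is not furnished by the results you invoke. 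The paper gets around exactly this point: it takes $S=\Spec(\mathcal O_{\mathcal X,y_1,\dots,y_n})$, the semi-local scheme at the closed points of $\mathcal Y=\Pi^{-1}(\Pi(\mathcal Z\cup\Delta(U)))$, with $T=\Delta(U)$, equates there via Theorem \ref{PropEquatingGroups_1_1_1}, spreads the cover and the isomorphisms out to a Zariski neighborhood $\mathcal V$ of those points, and then uses \cite[Constr. 4.2]{P} and \cite[Prop. 4.3]{P} (in particular a statement in the style of \cite[Lemma 8.2]{OP1} that produces a \emph{new} finite surjective $U$-morphism to $\Aff^1\times U$ on a smaller open set) to cut this back down to a special nice triple $(\mathcal X',f',\Delta')$ whose structure morphism $\theta$ to $(\mathcal X,f,\Delta)$ already equates $\mu_{\const}$ and $\mu_{\mathcal X}$. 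What you dismiss as ``bookkeeping'' --- upgrading the cover to a nice triple --- is precisely this substantive geometric step: after shrinking, finiteness over $\Aff^1\times U$ is lost and must be reconstructed.

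Your ordering of the two stages is also the reverse of the one that works, and it makes your first stage redundant. Conditions $(1^*)$ and $(2^*)$ of \cite[Defn. 3.7]{P} are not stable under the passage to the cover, so after your equating step you would have to apply \cite[Thm. 3.9]{P} again anyway; that second application is harmless exactly because Remark \ref{rem:theta_circ_theta'} preserves the equating property under post-composition with a further morphism of nice triples. This is the order the paper takes: first equate (over the semi-local $S$, then spread out and restore the nice-triple structure via \cite[Constr. 4.2]{P}, \cite[Prop. 4.3]{P}), and only then apply \cite[Thm. 3.9]{P} once to the equated triple, concluding with Remark \ref{rem:theta_circ_theta'}. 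So the architecture of your argument can be salvaged, but only after replacing the illegitimate application of the equating theorem to the non-semi-local $\mathcal X_0$ by the localize--equate--spread-out--reconstruct procedure described above.
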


\begin{proof}[Proof of Theorem \ref{equating3_triples}]
%%%Clearly Theorem \ref{equating3_triples} yields Theorem \ref{ThEquatingGroups_1}.
%%%Thus, it sufficient to prove Theorem \ref{equating3_triples}.
%We can start by
%almost literally repeating arguments from the proof of \cite[Lemma
%8.1]{OP1}, which involve the following purely geometric lemma
%\cite[Lemma 8.2]{OP1}.
%\par
%For reader's convenience below we state that Lemma adapting
%notation to the ones of Section \ref{NiceTriples}.
%and
%\cite[Lemma 8.3]{OP}.
%One should replace notation as follows:
%the finite surjective map
%$q: \mathcal X \to \Aff^1 \times U$
%from
%\cite[Lemma 8.2]{OP}
%replace with our finite surjective map
%$\Pi: \mathcal X \to \Aff^1 \times U$
%from the condition of Theorem
%\ref{EquatingGroups}.
%The following lemma is proved in
%\cite[Lemma 8.2]{OP}
Let $U$ be as in the theorem. Let
$(\mathcal X,f,\Delta)$
be a special nice triple
over $U$ as in the theorem.
By the definition
of a nice triple there exists a finite surjective morphism
$\Pi:\mathcal X\to\Aff^1\times U$ of $U$-schemes.
The construction \cite[Constr. 4.2]{P}
gives us now the data
$(\mathcal Z,\mathcal Y, S, T)$,
where $\mathcal Z, \mathcal Y, T$
are closed subsets as in $\mathcal X$ finite over $U$.
Particularly,  they are semi-local. If
$y_1,...,y_n$ are all closed points of $Y$,
then $S=\text{Spec}(\mathcal O_{X,y_1,...,y_n})$.

Further, let $G_U=\Delta^*(G_{\mathcal
X})$ be as in the hypotheses of Theorem
\ref{equating3_triples} and
let
$G_{\const}$
be the pull-back of
$G_U$ to $\mathcal X$. Finally,
let
$\varphi:G_{\const}|_T \to G_{\mathcal X}|_T$
be the canonical
isomorphism. Recall that by assumption $\mathcal X$ is $U$-smooth and irreducible,
and thus $S$ is regular and irreducible.
By
\cite[Thm. 3.1]{Pan1}
and Theorem
\ref{PropEquatingGroups_1_1_1}
there exists a finite
\'etale morphism $\theta_0:S^{\prime}\to S$, a section
$\delta:T\to S^{\prime}$ of $\theta_0$ over $T$ and isomorphisms
$\Phi_0:\theta^*_0(G_{\const,S})\to\theta^*_0(G_{\mathcal X}|_S)$
and
$\Psi_0:\theta^*_0(C_{\const,S})\to\theta^*_0(C_{\mathcal X}|_S)$
such that
%\begin{itemize}
%\item[(i)]
$\delta^*(\Phi_0)=\varphi$,
%\item[(ii)]
$\delta^*(\Psi_0)=\psi$
and
%\item[(iii)]
\begin{equation}
\label{mu_X_const_0}
\theta^*_0(\mu_{\mathcal X}|_S) \circ \Phi_0 = \Psi_0 \circ \theta^*_0(\mu_{\const,S}): \theta^*_0(G_{\const,S}) \to \theta^*_0(C_{\mathcal X}|_S)
\end{equation}
%\pi^*(G_1) \to \pi^*(C_2)$$
{\it where the scheme $S^{\prime}$ is irreducible.
}
Consider now the diagram (4) from the construction
\cite[Constr. 4.2]{P}.
We may and will now suppose that
the neighborhood $\mathcal V$ of the points
$\{y_1,\dots,y_n\}$
from that diagram is chosen such that
there are $\mathcal V'$-group schemes isomorphisms
$\Phi: \theta^*(G_{\const,\mathcal V})\to\theta^*(G_{\mathcal X}|_{\mathcal V})$
and
$\Psi: \theta^*(C_{\const,\mathcal V})\to\theta^*(C_{\mathcal X}|_{\mathcal V})$
with $\Phi|_{S'}=\Phi_0$ and $\Psi|_{S'}=\Psi_0$.
Clearly,
$\delta^*(\Phi)=\varphi$
and
$\delta^*(\Psi)=\psi$.
It is less clear, however it is still true that
\begin{equation}
\label{mu_X_const_1}
\theta^*(\mu_{\mathcal X}|_{\mathcal V}) \circ \Phi = \Psi \circ \theta^*(\mu_{\const,\mathcal V}):
\theta^*(G_{\const,\mathcal V}) \to \theta^*(C_{\mathcal X}|_\mathcal V)
\end{equation}

Applying the second part of the construction
\cite[Constr. 4.2]{P} and also
the proposition
\cite[Prop. 4.3]{P}
%\ref{prop:refining_triples}
to the finite
\'etale morphism $\theta: \mathcal V^{\prime}\to \mathcal V$ and to the section
$\delta:T\to \mathcal V^{\prime}$ of $\theta$ over $T$
we get \\
1) firstly, a triple $(\mathcal X^{\prime},f^{\prime},\Delta^{\prime})$;\\
2) secondly, the \'{e}tale morphism of $U$-schemes $\theta: \mathcal X^{\prime} \to \mathcal X$;\\
3) thirdly, inclusions of $U$-schemes $S\subset \mathcal W$ and $S' \subset \mathcal X'$.\\
Further we get \\
(i) the special nice triple $(q_U\circ \theta: \mathcal X^{\prime}\to U,f^{\prime},\Delta^{\prime})$
over $U$;\\
(ii) the morphism $\theta$ is a morphism
$(\mathcal X^{\prime},f^{\prime},\Delta^{\prime}) \to (\mathcal X,f,\Delta)$
between the nice triples and it {\it equates} the $\mathcal X$-group scheme morphisms
$\mu_{\const}$
and
$\mu_{\mathcal X}$;\\
(iii) the equality $f^{\prime}=\theta^*(f)$.

To complete the proof of the theorem just apply
the theorem \cite[Thm. 3.9]{P} to the
the special nice triple
$(\mathcal X^{\prime},f^{\prime},\Delta^{\prime})$
and use the remark
\ref{rem:theta_circ_theta'}.
%\cite[Rem. 4.3]{Pan1}.
\end{proof}

\section{A strong form of the theorem 5.1 from \cite{Pan1}}
\label{First Application}
Let $X$ be an affine $k$-smooth irreducible $k$-variety, and let $x_1,x_2,\dots,x_n$ be closed points in $X$.
Let $U=Spec(\mathcal O_{X,\{x_1,x_2,\dots,x_n\}})$.
Let $\bG$ be a reductive
$X$-group scheme
and let
$\bG_U= can^*(\bG)$
be the pull-back of $\bG$ to $U$.
Let $\mathbf C$ be an $X$-torus
and let
$\mathbf C_U= can^*(\mathbf C)$
be the pull-back of $\bG$ to $U$.
Let $\mu: \bG \to \mathbf C$
be an $X$-group scheme morphism which is smooth as an $X$-scheme morphism.
Let $\mu_U=can^*(\mu)$.
The following result is a strong form of the theorem 5.1 from
\cite{Pan1}.

\begin{thm}\label{equating3}
Given a non-zero function $\textrm{f}\in k[X]$ vanishing at each point $x_i$,
there is a diagram of the form
\begin{equation}
\label{DeformationDiagram0}
    \xymatrix{
\Aff^1 \times U\ar[drr]_{\pr_U}&&\mathcal X \ar[d]^{}
\ar[ll]_{\sigma}\ar[d]_{q_U}
\ar[rr]^{q_X}&&X &\\
&&U \ar[urr]_{\can}\ar@/_0.8pc/[u]_{\Delta} &\\
    }
\end{equation}
with an irreducible {\bf affine} scheme $\mathcal X$, a smooth morphism $q_U$, a finite surjective $U$-morphism $\sigma$ and an essentially smooth morphism $q_X$,
and a function $f^{\prime} \in q^*_X(\textrm{f} \ )k[\mathcal X]$,
%$f^{\prime}=\theta^{*}(f)\cdot h^{\prime}$ for an element
%$h^{\prime}\in\Gamma(\mathcal X^{\prime},\mathcal O_{\mathcal X^{\prime}})$,
which enjoys the following properties:
\begin{itemize}
\item[\rm{(a)}]
if
%$f=q^*_X(\textrm{f})$ and
$\mathcal Z^{\prime}$ is the closed subscheme of $\mathcal X$ defined by the principal ideal
$(f^{\prime})$, the morphism
$\sigma|_{\mathcal Z^{\prime}}: \mathcal Z^{\prime} \to \Aff^1\times U$
is a closed embedding and the morphism
$q_U|_{\mathcal Z^{\prime}}: \mathcal Z^{\prime} \to U$ is finite;
\item[\rm{(a')}] $q_U\circ \Delta=id_U$ and $q_X\circ \Delta=can$ and $\sigma\circ \Delta=i_0$ \\
(the first equality shows that $\Delta(U)$ is a closed subscheme in $\mathcal X$);
\item[\rm{(b)}] $\sigma$
is \'{e}tale in a neighborhood of
$\mathcal Z^{\prime}\cup \Delta(U)$;
\item[\rm{(c)}]
$\sigma^{-1}(\sigma(\mathcal Z^{\prime}))=\mathcal Z^{\prime}\coprod \mathcal Z^{\prime\prime}$
scheme theoretically
for some closed subscheme $\mathcal Z^{\prime\prime}$
\\ and
$\mathcal Z^{\prime\prime} \cap \Delta(U)=\emptyset$;
\item[\rm{(d)}]
$\mathcal D_0:=\sigma^{-1}(\{0\} \times U)=\Delta(U)\coprod \mathcal D^{\prime}_0$
scheme theoretically
for some closed subscheme $\mathcal D^{\prime}_0$
and $\mathcal D^{\prime}_0 \cap \mathcal Z^{\prime}=\emptyset$;
\item[\rm{(e)}]
for $\mathcal D_1:=\sigma^{-1}(\{1\} \times U)$ one has
%$D^{\prime}_1 \cap \mathcal Z^{\prime}=\emptyset$.
$\mathcal D_1 \cap \mathcal Z^{\prime}=\emptyset$.
\item[\rm{(f)}]
there is a monic polinomial
$h \in \mathcal O[t]$
such that
%$(h)=Ker[\mathcal O[t] \xrightarrow{{\bar \sigma}^*}\Gamma(\mathcal X, \mathcal O_{\mathcal X})/(f^{\prime})]$, \\
$(h)=Ker[\mathcal O[t] \xrightarrow{\sigma^*} k[\mathcal X] \xrightarrow{-} k[\mathcal X]/(f^{\prime})]$, \\
where $\mathcal O:=k[U]$ and the map bar takes any $g\in k[\mathcal X]$ to ${\bar g}\in k[\mathcal X]/(f^{\prime})$;\\
\item[\rm{(g)}] there are $\mathcal X$-group scheme isomorphisms
$\Phi:  p^*_U(\bG_U)\to p^*_X(\bG)$,
$\Psi:  p^*_U(\mathbf C_U)\to p^*_X(\mathbf C)$
with
$\Delta^*(\Phi)= id_{\bG_U}$,
$\Delta^*(\Psi)= id_{\mathbf C_U}$
and
$p^*_X(\mu) \circ \Phi=\Psi \circ p^*_U(\mu_U)$.
\end{itemize}
\end{thm}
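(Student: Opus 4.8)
The plan is to deduce this from the "triples" version, Theorem \ref{equating3_triples}, by starting from a special nice triple that packages the data of the diagram (\ref{DeformationDiagram0}). First I would invoke the results of Section \ref{First Application}'s predecessor, namely the machinery of \cite[Thm. 5.1]{Pan1} together with the geometric input from \cite{P}, to produce a special nice triple $(\mathcal X_0, f_0, \Delta_0)$ over $U$ together with the reductive $\mathcal X_0$-group scheme $G_{\mathcal X_0} = q_X^*(\bG)$, the torus $C_{\mathcal X_0} = q_X^*(\mathbf C)$ and the morphism $\mu_{\mathcal X_0} = q_X^*(\mu)$, arranged so that $\Delta_0^*$ of these recovers $\bG_U$, $\mathbf C_U$, $\mu_U$. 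At this stage one has, essentially for free, properties (a')--(f) of the statement (the finiteness of $\sigma$, the étaleness near $\mathcal Z' \cup \Delta(U)$, the splitting of the preimages, and the monic polynomial $h$ of item (f)): these are exactly the structural features encoded in the definition of a special nice triple subject to conditions $(1^*)$ and $(2^*)$ of \cite[Defn. 3.7]{P}, and in the construction \cite[Constr. 4.2]{P}.

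Next I would apply Theorem \ref{equating3_triples} to this special nice triple. It furnishes a morphism of nice triples $\theta'' \colon (\mathcal X'', f'', \Delta'') \to (\mathcal X_0, f_0, \Delta_0)$ over $U$ which \emph{equates} the group scheme morphisms $\mu_{\const}$ and $\mu_{\mathcal X_0}$; that is, it supplies $\mathcal X''$-isomorphisms $\Phi \colon \theta''^*(G_{\const}) \to \theta''^*(G_{\mathcal X_0})$ and $\Psi \colon \theta''^*(C_{\const}) \to \theta''^*(C_{\mathcal X_0})$ with $(\Delta'')^*(\Phi) = \id_{\bG_U}$, $(\Delta'')^*(\Psi) = \id_{\mathbf C_U}$ and $\theta''^*(\mu_{\mathcal X_0}) \circ \Phi = \Psi \circ \theta''^*(\mu_{\const})$. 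Setting $\mathcal X := \mathcal X''$, $f' := f'' = \theta''^*(f_0)$, $\Delta := \Delta''$, $q_U := q_U^{0} \circ \theta''$, $q_X := q_X^{0} \circ \theta''$ and $\sigma := \sigma_0 \circ (\text{the map to } \Aff^1 \times U \text{ induced by } \theta'')$, the isomorphisms $\Phi$, $\Psi$ become precisely the data required in item (g), since $p_U^*(\bG_U) = \theta''^*(G_{\const})$ and $p_X^*(\bG) = \theta''^*(G_{\mathcal X_0})$ by definition of the pull-backs, and similarly for $\mathbf C$ and $\mu$. Because $\theta''$ is a morphism of nice triples it is compatible with the $\Delta$'s, so $q_U \circ \Delta = \id_U$, $q_X \circ \Delta = \can$, $\sigma \circ \Delta = i_0$, giving (a'); and because $\theta''$ is étale (indeed it is a composition of an étale morphism with projections, as in the proof of Theorem \ref{equating3_triples}) and $(\mathcal X'', f'', \Delta'')$ is again special nice satisfying $(1^*), (2^*)$, properties (a), (b), (c), (d), (e), (f) are inherited: one pulls back the corresponding properties of $(\mathcal X_0, f_0, \Delta_0)$ along $\theta''$, using that $\mathcal Z' = \theta''^{-1}(\mathcal Z_0')$ scheme-theoretically, that $\theta''$ restricts to an isomorphism near $\mathcal Z' \cup \Delta(U)$ onto its image, and that the monic polynomial $h$ of (f) is unchanged since $\theta''^*(f_0) = f''$ and $\theta''$ is the identity over $T \supseteq \mathcal Z_0'$ in the relevant sense. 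Finally, irreducibility and affineness of $\mathcal X$ are guaranteed because Theorem \ref{equating3_triples}(ii) keeps the triple special nice (hence $\mathcal X$ is $U$-smooth and irreducible) and the construction \cite[Constr. 4.2]{P} stays within affine schemes; essential smoothness of $q_X$ follows from $q_X^0$ being essentially smooth and $\theta''$ being étale.

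The main obstacle is the verification of the compatibility (\ref{mu_X_const_1}) — that the isomorphisms $\Phi$, $\Psi$ constructed over the neighborhood $\mathcal V$ (extending $\Phi_0$, $\Psi_0$ from the irreducible semi-local scheme $S'$) still intertwine the two pull-backs of $\mu$. Over $S'$ this is the output (\ref{mu_X_const_0}) of Theorem \ref{PropEquatingGroups_1_1_1}, but spreading the identity $\theta^*(\mu_{\mathcal X}) \circ \Phi = \Psi \circ \theta^*(\mu_{\const})$ out from $S'$ to $\mathcal V'$ requires knowing that two $\mathcal V'$-morphisms of schemes agreeing on the schematically dense (because $\mathcal V'$ is irreducible and $S'$ meets every component) subscheme $S'$ must coincide, which in turn uses that the target torus $C$ is separated and that $\mathcal V'$ is reduced and irreducible. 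This is the delicate point that Theorem \ref{equating3_triples} already handles via \cite[Prop. 3.5]{Pan1} and the irreducibility hypotheses, so in the present proof it is legitimate to quote it; the remaining work here is purely bookkeeping — matching the abstract "nice triple" data to the concrete diagram (\ref{DeformationDiagram0}) and tracking the seven properties (a)--(g) through the specialization.
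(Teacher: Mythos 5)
Your overall strategy (first build a special nice triple from the geometric data, then invoke Theorem \ref{equating3_triples} to equate $\mu_{\const}$ with $\mu_{\mathcal X}$, and read off item (g) from the intertwining relation) matches the paper's route, but there is a genuine gap in how you obtain properties (a)--(f). You assert that the initial triple $(\mathcal X_0,f_0,\Delta_0)$ already carries, ``essentially for free,'' a finite surjective $\sigma_0$ with properties (a)--(f), these being ``structural features encoded in the definition of a special nice triple subject to $(1^*)$ and $(2^*)$,'' and you then transport them to $\mathcal X''$ by setting $\sigma:=\sigma_0\circ\theta''$. Neither step is correct. First, the triple produced by \cite[Prop.~3.6]{P} is special nice but is \emph{not} asserted to satisfy $(1^*)$ and $(2^*)$; securing those conditions is precisely part of the output of Theorem \ref{equating3_triples}(ii) (via \cite[Thm.~3.9]{P} and Remark \ref{rem:theta_circ_theta'}), and over a finite base field they are exactly what makes a $\sigma$ with properties (a)--(e) constructible at all. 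Second, even for a triple satisfying $(1^*)$ and $(2^*)$, the refined morphism $\sigma$ with (a)--(f) is not part of the definition of a nice triple (which only provides some finite surjective $\Pi$); it is the content of the nontrivial theorem \cite[Thm.~3.8]{P}.

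The transport step also fails on its own terms: a morphism of nice triples $\theta''$ is only \'etale, so $\sigma_0\circ\theta''$ is quasi-finite but in general not finite, and the scheme-theoretic splittings $\sigma^{-1}(\sigma(\mathcal Z'))=\mathcal Z'\sqcup\mathcal Z''$, $\sigma^{-1}(\{0\}\times U)=\Delta(U)\sqcup\mathcal D'_0$, as well as item (f), do not pull back along $\theta''$; your claim that $\theta''$ ``restricts to an isomorphism near $\mathcal Z'\cup\Delta(U)$'' is not established and is not available in general. The paper's proof avoids all of this by reversing the order: it first applies Theorem \ref{equating3_triples} to get the new special nice triple $(\mathcal X_{new},f_{new},\Delta_{new})$ satisfying $(1^*)$, $(2^*)$ together with $\Phi$, $\Psi$ and the relation $\theta^*(\mu_{\mathcal X})\circ\Phi=\Psi\circ\theta^*(\mu_{\const})$, and only then applies \cite[Thm.~3.8]{P} \emph{to the new triple} to construct $\sigma_{new}$ on $\mathcal X_{new}$ with properties (a)--(f); item (g) then follows from the already established relation between $\Phi$ and $\Psi$. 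To repair your argument, delete the claim that (a)--(f) hold on $\mathcal X_0$ and the pullback of $\sigma_0$, and instead construct $\sigma$ directly on $\mathcal X''$ by citing \cite[Thm.~3.8]{P}, which is legitimate exactly because Theorem \ref{equating3_triples}(ii) guarantees $(1^*)$ and $(2^*)$ for the new triple.
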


\begin{proof}[Proof of Theorem \ref{equating3}]
By \cite[Prop. 3.6]{P} one can shrink $X$ such that
$x_1,x_2, \dots , x_n$ are still in $X$ and $X$ is affine, and then to construct a special nice triple
$(q_U: \mathcal X \to U, \Delta, f)$ over $U$ and an essentially smooth morphism $q_X: \mathcal X \to X$ such that
$q_X \circ \Delta= can$, $f=q^*_X(\text{f})$ and the set of closed points of $\Delta(U)$ is
contained in the set of closed points of $\{f=0\}$.

Set $\bG_{\mathcal X}=q^*_X(\bG)$, then $\Delta^*(\bG_{\mathcal X})=can^*(\bG)$. Thus the $U$-group scheme
$\bG_U$ from Theorem \ref{equating3_triples} and the $U$-group scheme
$\bG_U$ from Theorem \ref{equating3} are the same. By Theorem
\ref{equating3_triples}
there exists a morphism
$\theta:(\mathcal X_{new},f_{new},\Delta_{new})\to(\mathcal X,f,\Delta)$
such that the triple
$(\mathcal X_{new},f_{new},\Delta_{new})$
%$(\mathcal X^{\prime},f^{\prime},\Delta^{\prime})$
is a special nice triple
over $U$
subject to the conditions
$(1^*)$ and $(2^*)$ from
\cite[Defn. 3.7]{P}.
And, additionally,
there are isomorphism
$$\Phi: (q_U\circ \theta)^*(\bG_U)=\theta^*(G_{\text{const}})
\to \theta^*(G_{\mathcal X})=(q_X \circ \theta)^*(\bG) \ \text{with} \ (\Delta_{new})^*(\Phi)= id_{G_U}$$
$$\Psi: (q_U\circ \theta)^*(\mathbf C_U)=\theta^*(C_{\text{const}}) \to \theta^*(C_{\mathcal X})=(q_X \circ \theta)^*(\mathbf G)$$
of
$\mathcal X_{new}$-group schemes such that
%\begin{itemize}
%\item[\rm{(1)}]
$(\Delta^{\prime})^*(\Phi)= id_{G_U}$,
$(\Delta^{\prime})^*(\Phi)= id_{G_U}$
and
\begin{equation}
\label{PhiAndPsiRelation}
\theta^*(\mu_{\mathcal X}) \circ \Phi = \Psi \circ \theta^*(\mu_{const}).
\end{equation}
%\end{itemize}

The triple
$(\mathcal X_{new},f_{new},\Delta_{new})$
%$(\mathcal X^{\prime},f^{\prime},\Delta^{\prime})$
is a special nice triple
{\bf over} $U$
subject to the conditions
$(1^*)$ and $(2^*)$ from Definition
\cite[Defn. 3.7]{P}.
%the condition $(2)$ to $(4)$
%from Theorem \ref{ThEquatingGroups_1}
%are full filled.
Thus by \cite[Thm. 3.8]{P}
%\ref{ElementaryNisSquare_1}
there is a finite surjective morphism
$\Aff^1\times U \xleftarrow{\sigma_{new}} \mathcal X_{new}$
of the $U$-schemes satisfying the conditions
$(a)$ to $(\textrm{f})$
from that Theorem. Hence one has a diagram of the form
\begin{equation}
\label{DeformationDiagram0_1}
    \xymatrix{
\Aff^1 \times U\ar[drr]_{\pr_U}&&\mathcal X_{new} \ar[d]^{}
\ar[ll]_{\sigma_{new}}\ar[d]_{q_U\circ \theta}
\ar[rr]^{q_X\circ \theta}&&X &\\
&&U \ar[urr]_{\can}\ar@/_0.8pc/[u]_{\Delta^{\prime}} &\\
    }
\end{equation}
with the irreducible scheme $\mathcal X_{new}$, the smooth morphism $q_{U,new}:=q_U\circ \theta$,
the finite surjective morphism $\sigma_{new}$ and the essentially smooth morphism $q_{X,new}:=q_X\circ \theta$
and with the function
$f_{new} \in (q_{X,new})^*(\textrm{f})k[\mathcal X_{new}]$,
which after identifying notation enjoy the properties
(a) to (\textrm{f}) from Theorem
\ref{equating3}.
%\ref{ElementaryNisSquare_1}.
%Moreover the isomorphism $\Phi$ of reductive $\mathcal X^{\prime}$-group schemes
%is such that $(\Delta^{\prime})^*(\Phi)=\id_{G_U}$.
The equality
(\ref{PhiAndPsiRelation})
shows that
the isomorphisms $\Phi$
and $\Psi$ subject to
the condition
(g).
Whence the Theorem \ref{equating3}.
\end{proof}

To formulate a consequence of the theorem
\ref{equating3} (see Corollary \ref{ElementaryNisSquareNew_1}),
note that using the items (b) and (c) of Theorem
\ref{equating3}
one can find an element
$g \in I(\mathcal Z^{\prime\prime})$
such that \\
(1) $(f^{\prime})+(g)=\Gamma(\mathcal X, \mathcal O_{\mathcal X})$, \\
(2) $Ker((\Delta)^*)+(g)=\Gamma(\mathcal X, \mathcal O_{\mathcal X})$, \\
(3) $\sigma_g=\sigma|_{\mathcal X_g}: \mathcal X_g \to \Aff^1_U$ is \'{e}tale.\\

Here is the corollary. It is proved in \cite[Cor. 7.2]{P}.
\begin{cor}
\label{ElementaryNisSquareNew_1}
The function $f^{\prime}$ from Theorem \ref{equating3}, the polinomial $h$ from the item $(\textrm{f} \ )$
of that Theorem, the morphism $\sigma: \mathcal X \to \Aff^1_U$
and the function
$g \in \Gamma(\mathcal X,\mathcal O_{\mathcal X} )$
defined just above
enjoy the following properties:
\begin{itemize}
\item[\rm{(i)}]
the morphism
$\sigma_g= \sigma|_{\mathcal X_g}: \mathcal X_g \to \Aff^1\times U $
is \'{e}tale,
\item[\rm{(ii)}]
data
$ (\mathcal O[t],\sigma^*_g: \mathcal O[t] \to \Gamma(\mathcal X,\mathcal O_{\mathcal X})_g, h ) $
satisfies the hypotheses of
\cite[Prop.2.6]{C-TO},
i.e.
$\Gamma(\mathcal X,\mathcal O_{\mathcal X} )_g$
is a finitely generated
$\mathcal O[t]$-algebra, the element $(\sigma_g)^*(h)$
is not a zero-divisor in
$\Gamma(\mathcal X,\mathcal O_{\mathcal X} )_g$
and
$\mathcal O[t]/(h)=\Gamma(\mathcal X,\mathcal O_{\mathcal X})_g/h\Gamma(\mathcal X,\mathcal O_{\mathcal X})_g$ \ ,
\item[\rm{(iii)}]
$(\Delta(U) \cup \mathcal Z') \subset \mathcal X_g$ \ and $\sigma_g \circ \Delta=i_0: U\to \Aff^1\times U$,
\item[\rm{(iv)}]
$\mathcal X_{gh} \subseteq \mathcal X_{gf^{\prime}}\subseteq \mathcal X_{f^{\prime}}\subseteq \mathcal X_{q^*_X(\textrm{f})}$ \ ,
\item[\rm{(v)}]
$\mathcal O[t]/(h)=\Gamma(\mathcal X,\mathcal O_{\mathcal X})/(f^{\prime})$
and
$h\Gamma(\mathcal X,\mathcal O_{\mathcal X})=(f^{\prime})\cap I(\mathcal Z^{\prime\prime})$
and
$(f^{\prime}) +I(\mathcal Z^{\prime\prime})=\Gamma(\mathcal X,\mathcal O_{\mathcal X})$.
\end{itemize}
\end{cor}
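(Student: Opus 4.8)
The plan is to deduce all five assertions directly from Theorem~\ref{equating3}, from the monic polynomial $h$ of its item~(f), and from the three properties (1)--(3) of the element $g$ recalled just above the statement; no further geometric construction is needed, only bookkeeping with ideals. I would begin by recording two facts coming from item~(f). Since $\mathcal X$ is $U$-smooth and irreducible, $\Gamma(\mathcal X,\mathcal O_{\mathcal X})$ is a domain, and since $\sigma$ is finite surjective the ring map $\sigma^*:\mathcal O[t]\to\Gamma(\mathcal X,\mathcal O_{\mathcal X})$ is injective; as $h\neq 0$ this gives $\sigma^*(h)\neq 0$. By item~(a) the morphism $\sigma|_{\mathcal Z'}$ is a closed embedding, so $\mathcal O[t]\to\Gamma(\mathcal X,\mathcal O_{\mathcal X})/(f')$ is surjective with kernel exactly $(h)$, whence the isomorphism $\mathcal O[t]/(h)\cong\Gamma(\mathcal X,\mathcal O_{\mathcal X})/(f')$; this is the first equality in~(v). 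For the second equality, $\sigma(\mathcal Z')$ equals $V(h)$ scheme-theoretically in $\Aff^1\times U$, so $\sigma^{-1}(\sigma(\mathcal Z'))$ is cut out by $\sigma^*(h)\,\Gamma(\mathcal X,\mathcal O_{\mathcal X})$; comparing this with the scheme-theoretic identity $\sigma^{-1}(\sigma(\mathcal Z'))=\mathcal Z'\coprod\mathcal Z''$ of item~(c) yields $h\,\Gamma(\mathcal X,\mathcal O_{\mathcal X})=(f')\cap I(\mathcal Z'')$, and the comaximality $(f')+I(\mathcal Z'')=\Gamma(\mathcal X,\mathcal O_{\mathcal X})$ follows from property~(1) together with $g\in I(\mathcal Z'')$.

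The remaining items (i), (iii), (iv) are then immediate. Item~(i) is precisely property~(3). For~(iii): property~(2) says $\Delta(U)\cap V(g)=\emptyset$, hence $\Delta(U)\subset\mathcal X_g$; property~(1) says $V(f')\cap V(g)=\emptyset$, i.e.\ $\mathcal Z'\subset\mathcal X_g$; and $\sigma_g\circ\Delta=\sigma\circ\Delta=i_0$ by item~(a'). For~(iv): the last inclusion holds because $f'\in q^*_X(\textrm{f})\,k[\mathcal X]$, the first because $\sigma^*(h)\in(f')$ by item~(f), so $\mathcal X_{g\sigma^*(h)}\subseteq\mathcal X_{gf'}$, and the middle one is trivial.

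For item~(ii): $\Gamma(\mathcal X,\mathcal O_{\mathcal X})$ is a finite $\mathcal O[t]$-module via the finite morphism $\sigma$, hence a finitely generated $\mathcal O[t]$-algebra, and this survives localization at $g$; next, $(\sigma_g)^*(h)$ is a non-zero-divisor because $\sigma^*(h)\neq 0$ and $\Gamma(\mathcal X,\mathcal O_{\mathcal X})_g$ is a domain; finally, since $V(f')=\mathcal Z'\subset\mathcal X_g$ (established in~(iii)), localizing at $g$ does not change the quotient modulo $(f')$, so $\Gamma(\mathcal X,\mathcal O_{\mathcal X})_g/h\,\Gamma(\mathcal X,\mathcal O_{\mathcal X})_g=\Gamma(\mathcal X,\mathcal O_{\mathcal X})_g/(f')\Gamma(\mathcal X,\mathcal O_{\mathcal X})_g\cong\Gamma(\mathcal X,\mathcal O_{\mathcal X})/(f')\cong\mathcal O[t]/(h)$, the first equality using the second equality in~(v) and $g\in I(\mathcal Z'')$. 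The only delicate point is that the decomposition in item~(c), and hence~(v), must be used as an equality of ideals and not merely of closed sets: it is exactly the ideal identity $h\,\Gamma(\mathcal X,\mathcal O_{\mathcal X})=(f')\cap I(\mathcal Z'')$ that makes the localization step in~(ii) legitimate. Everything else is routine; this is carried out in \cite[Cor.~7.2]{P}.
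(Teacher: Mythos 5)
Your argument is correct and follows essentially the same route as the proof the paper points to (\cite[Cor.~7.2]{P}): items (i), (iii), (iv) are read off from the properties (1)--(3) of $g$ together with items (a'), (f) of Theorem~\ref{equating3}, and item (ii) rests on the chain $\mathcal O[t]/(h)=\Gamma(\mathcal X,\mathcal O_{\mathcal X})/(f^{\prime})=\Gamma(\mathcal X,\mathcal O_{\mathcal X})_g/f^{\prime}\Gamma(\mathcal X,\mathcal O_{\mathcal X})_g=\Gamma(\mathcal X,\mathcal O_{\mathcal X})_g/h\Gamma(\mathcal X,\mathcal O_{\mathcal X})_g$, exactly as in that reference. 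Your use of the ideal identity $h\Gamma(\mathcal X,\mathcal O_{\mathcal X})=(f^{\prime})\cap I(\mathcal Z^{\prime\prime})$ localized at $g$ is just the ideal-theoretic form of the product decomposition $\Gamma/(f^{\prime})\times\Gamma/I(\mathcal Z^{\prime\prime})=\Gamma/(h)$ used there, so no genuinely new route is involved.
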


\section{Norms}
\label{SectNorms}
In sections 12, 13, 14 we prove few results which are used below to prove Theorems
%the rest of the paper we prove few results which we refer to reducing Theorems
\ref{Aus_Buksbaum}, \ref{PurityForSubgroup}
and
\ref{MainThmGeometric}.
%to Theorem \ref{equating3}.

Let $k\subset K\subset L$ be field extensions and assume that $L$
is finite separable over $K$. Let $K^{sep}$ be a separable closure
of $K$ and $\sigma_i:K\to K^{sep},\enspace 1\leq i\leq n$
the different embeddings of $K$ into $L$. Let $C$ be a $k$-smooth
commutative algebraic group scheme defined over $k$. One can define
a norm map
$${\mathcal N}_{L/K}:C(L)\to C(K)$$
by
${\mathcal N}_{L/K}(\alpha)=\prod_i C(\sigma_i)(\alpha) \in C(K^{sep})^{{\mathcal G}(K)} =C(K)\ .
$
In \cite{Pa2} following Suslin and Voevodsky
\cite[Sect.6]{SV} we
generalize this construction to finite flat ring extensions.
\smallskip
Let $p:X\to Y$ be a finite flat morphism of affine schemes.
Suppose that its rank is constant, equal to $d$. Denote by
$S^d(X/Y)$ the $d$-th symmetric power of $X$ over $Y$.

\smallskip
Let $k$ be a field. Let $\mathcal O$ be the semi-local ring of finitely many {\bf closed} points
on a smooth affine irreducible $k$-variety $X$.
Let $C$ be an affine smooth commutative $\mathcal O$-group scheme,
Let $p:X\to Y$ be a finite flat morphism of affine $\mathcal O$-schemes
and $f:X\to C$ any $\mathcal O$-morphism. In \cite{Pa2} {\it the norm} $N_{X/Y}(f)$ of $f$ {\it is defined as the composite map}
\begin{equation}
\label{NormMap}
Y \xra{N_{X/Y}}
S^d(X/Y) \to S^d_{\mathcal O}(X) \xra{S^d_{\mathcal O}(f)} S^d_{\mathcal O}(C)\xra{\times} C
\end{equation}
Here we write $"\times"$ for the group law on $C$.
The norm maps $N_{X/Y}$ satisfy the following conditions
\begin{itemize}
\item[(i')]
Base change: for any map $f:Y'\to Y$ of affine schemes, putting
$X'=X\times_Y Y'$ we have a commutative diagram
$$
\begin{CD}
C(X)                  @>{(id \times f)^{*}}>>            C(X^{\prime})      \\
@V{N_{X/Y}}VV @VV{N_{X^{\prime}/Y^{\prime}}}V    \\
C(Y)                  @>{f^{*}}>>            C(Y^{\prime})
\end{CD}
$$
\item[(ii')]
multiplicativity: if $X=X_1 \amalg X_2$ then the diagram commutes
$$
\begin{CD}
C(X)                  @>{(id \times f)^{*}}>>            C(X_1) \times C(X_2)      \\
@V{N_{X/Y}}VV                              @VV{N_{X_1/Y}N_{X_2/Y}}V    \\
C(Y)                  @>{id}>>            C(Y)
\end{CD}
$$
\item[(iii')]
normalization: if $X=Y$ and the map $X \to Y$ is the identity then $N_{X/Y}=id_{C(X)}$.
\end{itemize}

\section{Unramified elements}
\label{SectUnramifiedElements}
Let $k$ be {\bf a field}, $\mathcal O$ be the $k$-algebra from Theorem
\ref{Aus_Buksbaum}
and $K$ be the fraction field of $\mathcal O$.
Let
$\mu: G\to C$
be the morphism of reductive $\mathcal O$-group schemes from Theorem
\ref{Aus_Buksbaum}.
We work in this section with {\it the category of commutative Noetherian $\mathcal O$-algebras}.
For a commutative $\mathcal O$-algebra $S$ set
\begin{equation}
\label{MainFunctor}
{\cal F}(S)=C(S)/\mu(G(S)).
\end{equation}
Let $S$ be an $\mathcal O$-algebra which is a domain and
let $L$ be its fraction field.
Define the {\it subgroup of $S$-unramified elements of $\mathcal F (L)$} as
\begin{equation}
\label{DefnUnramified}
\mathcal F_{nr,S}(L)=
\bigcap_{\mathfrak p \in Spec(S)^{(1)}} Im [ \mathcal F(S_{\mathfrak p})\to\mathcal F(L) ],
\end{equation}
where $Spec(S)^{(1)}$ is the set of hight $1$ prime ideals in $S$.
Obviously the image of $\mathcal F(S)$ in $\mathcal F(L)$ is contained in
$\mathcal F_{nr,S}(L)$. In most cases
$\mathcal F(S_{\mathfrak p})$
injects into
$\mathcal F(L)$
and
$\mathcal F_{nr,S}(L)$
is simply the intersection of all
$\mathcal F(S_{\mathfrak p})$.
For an element $\alpha \in C(S)$ we will write $\bar \alpha$ for its
image in ${\cal F}(S)$. {\it In this section we will write  ${\cal F}$
for the functor }
(\ref{MainFunctor}).
\begin{thm}[\cite{Ni}] %%%%%% This is due to Nisnevich, the label should be
%%%%%%%%%%%% put as a reference, not as an attribution
\label{NisnevichCor}
Let $S$ be a $\mathcal O$-algebra which is discrete valuation ring with fraction field $L$.
The map
${\cal F}(S) \to {\cal F}(L)$
is injective.
\end{thm}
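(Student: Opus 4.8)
The plan is to reduce the statement to Nisnevich's theorem on the Grothendieck--Serre property over a discrete valuation ring. Since $\mathbf C$ is a torus, $\mathcal F(S)=C(S)/\mu(G(S))$ is an abelian group, so it suffices to show that the map has trivial kernel: if $c\in C(S)$ has image $\bar c\in\mathcal F(S)$ that dies in $\mathcal F(L)$, I must show $\bar c=0$. The point is that the implication ``$c\in\mu(G(L))\Rightarrow c\in\mu(G(S))$'' is precisely a vanishing statement for the boundary map of the short exact sequence attached to $\mu$, applied to $S$ and to the kernel $H:=\ker\mu$.

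First I would record that $\mu$ is faithfully flat: it is flat because it is smooth, and it is surjective because fibrewise a smooth homomorphism from a connected reductive group onto a torus is surjective (the image is a closed subgroup onto which the homomorphism is faithfully flat, so smoothness descends to the closed immersion of the image, which must then be open and hence all of the torus by connectedness). Writing $H=\ker\mu$, which is a reductive $S$-group scheme by the hypotheses of Theorem~\ref{Aus_Buksbaum}, the scheme $G$ is then an $H$-torsor over $\mathbf C$, so
$$1\to H\to G\xrightarrow{\ \mu\ }\mathbf C\to1$$
is a short exact sequence of smooth affine $S$-group schemes. It produces, functorially in the $\mathcal O$-algebra, the usual exact sequence of pointed sets $G(-)\xrightarrow{\mu}C(-)\xrightarrow{\ \partial\ }H^1_{\et}(-,H)$; concretely, for $c\in C(S)$ the fibre $\mu^{-1}(c)$ is an $H$-torsor over $S$ whose isomorphism class is $\partial_S(c)$, and $\bar c=0$ in $\mathcal F(S)$ if and only if $\partial_S(c)=\ast$, i.e. if and only if this torsor is trivial.

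Now take $c\in C(S)$ whose class maps to $0$ in $\mathcal F(L)$, so that $\partial_L(c|_L)=\ast$. By functoriality of $\partial$ this says that the image of $\partial_S(c)\in H^1_{\et}(S,H)$ under restriction to $H^1_{\et}(L,H)$ is trivial, i.e. the torsor $\mu^{-1}(c)$ becomes trivial over the fraction field $L$. Since $S$ is a discrete valuation ring and $H$ is reductive, Nisnevich's theorem \cite{Ni} asserts that the kernel of $H^1_{\et}(S,H)\to H^1_{\et}(L,H)$ is trivial; hence $\partial_S(c)=\ast$, that is $c\in\mu(G(S))$ and $\bar c=0$ in $\mathcal F(S)$, as wanted. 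The essential input is Nisnevich's theorem; the remainder is the formalism of the cohomology sequence, and the only point requiring a little care is the surjectivity of $\mu$ (needed for the sequence above to be exact on the right and for $\mu^{-1}(c)$ to be a torsor rather than a mere pseudo-torsor), which is why I dispose of it at the outset.
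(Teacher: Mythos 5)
Your proof is correct and is essentially the argument the paper intends: the result is stated as a corollary of Nisnevich's theorem \cite{Ni} (the label is even ``NisnevichCor''), and your reduction via the exact sequence $1\to H\to G\xrightarrow{\mu} C\to 1$ and the boundary map $\partial$ is the standard derivation the paper leaves implicit (compare Lemma~\ref{BoundaryInj}). Your preliminary check that smoothness forces $\mu$ to be surjective, so that the sequence is exact as \'etale sheaves, is a sensible and correct addition.
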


\begin{lem}
\label{BoundaryInj}
Let $\mu: G \to C$ be the above morphism of our reductive group schemes.
Let $H= \ker (\mu)$.
Then for an $\mathcal O$-algebra $L$, where $L$ is a field, the boundary map
$\partial: C(L)/{\mu (G(L))} \to \textrm{H}^1_{\text{\'et}}(L,H)$
is injective.
\end{lem}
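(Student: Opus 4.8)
The plan is to identify the set $C(L)/\mu(G(L))$ with a piece of a long exact cohomology sequence and then observe that the boundary map $\partial$ is precisely the connecting map in that sequence, so that its injectivity is the standard exactness statement. First I would write $H=\ker(\mu)$ and recall that, since $\mu\colon G\to C$ is a smooth morphism of reductive $\mathcal O$-group schemes and $C$ is a torus, the sequence $\{1\}\to H\to G\xrightarrow{\mu} C'\to\{1\}$ is an exact sequence of group sheaves, where $C'=\mu(G)$ is the scheme-theoretic image; because $\mu$ is smooth and $C$ is smommoth affine, $C'$ is an open (hence closed, being a subgroup) subscheme of $C$, and in fact after replacing the target one may simply assume $\mu$ is surjective as a morphism of sheaves in the \'etale (equivalently fppf) topology. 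Over the field $L$ this gives the exact sequence of pointed sets
\begin{equation*}
G(L) \xrightarrow{\mu} C'(L) \xrightarrow{\partial} \mathrm{H}^1_{\text{\'et}}(L,H) \xrightarrow{} \mathrm{H}^1_{\text{\'et}}(L,G).
\end{equation*}

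Next I would use the fact, noted in the hypotheses, that $H$ is a reductive (in particular smooth affine) $\mathcal O$-group scheme, so that $\mathrm{H}^1_{\text{\'et}}$ computes the same thing as $\mathrm{H}^1_{\text{fppf}}$ and the connecting map $\partial$ is a genuine map of pointed sets with the usual property: two elements $c_1,c_2\in C'(L)$ have the same image under $\partial$ if and only if they lie in the same orbit of $G(L)$ acting by translation, i.e. $c_1 c_2^{-1}\in \mu(G(L))$. This is exactly the statement that $\partial$ descends to an \emph{injection} on the quotient $C'(L)/\mu(G(L)) = C(L)/\mu(G(L))$; the last equality holds because for a field $L$ the sheaf surjectivity of $\mu$ onto $C'$ together with $C'$ open-and-closed in $C$ forces $C'(L)=C(L)$ unless $C/C'$ has nontrivial $L$-points, and in the latter case one works with $C'$ throughout and the lemma is about the induced quotient anyway. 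I would phrase the argument so as to avoid this nuisance by defining $\partial$ on $C(L)$ via the composite $C(L)\to \mathrm{H}^1(L,H)$ coming from the exact sequence with $C'$, which is well-defined since every class in the relevant quotient is hit.

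The only real content is the injectivity on orbits, and that is the formal ``exactness at the middle term'' of the cohomology sequence associated to $\{1\}\to H\to G\to C'\to\{1\}$, valid because $H$ is smooth so $\mathrm{H}^1_{\text{\'et}}$ and torsor-theoretic $\mathrm{H}^1$ agree; concretely, if $\partial(c_1)=\partial(c_2)$ then the $H$-torsors obtained as fibres $\mu^{-1}(c_1)$ and $\mu^{-1}(c_2)$ are isomorphic, and trivialising that isomorphism produces $g\in G(L)$ with $\mu(g)=c_1 c_2^{-1}$. I do not expect a genuine obstacle here; the one point requiring a line of care is the reduction from $C$ to the image subgroup $C'=\mu(G)$ and checking it is an open and closed subgroup scheme of the torus $C$ (which follows from smoothness of $\mu$ and the fact that the image of a homomorphism of smooth affine group schemes over a field, spread out, is a closed subgroup scheme), so that the fppf-exactness of the three-term sequence is legitimate and the connecting map is defined on the nose.
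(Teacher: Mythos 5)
The paper states Lemma \ref{BoundaryInj} without proof, so there is no argument of the author's to compare against; your proposal is the standard connecting-map/twisting argument, and its core is correct. The fibre $\mu^{-1}(c)$ is an $H$-torsor over $L$ representing $\partial(\bar c)$, and an isomorphism of torsors $\mu^{-1}(c_1)\simeq\mu^{-1}(c_2)$, evaluated at an $L^{\mathrm{sep}}$-point and checked to commute with the Galois action, produces $g\in G(L)$ with $\mu(g)=c_2c_1^{-1}$; since $H=\ker(\mu)$ is reductive, hence smooth, $\textrm{H}^1_{\text{fppf}}$ and $\textrm{H}^1_{\text{\'et}}$ agree and this Galois-cohomological computation is legitimate. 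One terminological caution: injectivity on the quotient is \emph{not} literally ``exactness at the middle term'' of the pointed-set sequence (exactness only identifies the fibre of $\partial$ over the distinguished point); it is the stronger fibres-are-orbits statement, which you do in fact prove by the torsor argument, so the substance is fine.

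The place where your write-up goes astray is the treatment of the image $C'=\mu(G)$. The fallback ``in the latter case one works with $C'$ throughout and the lemma is about the induced quotient anyway'' is not acceptable: the lemma concerns $C(L)/\mu(G(L))$ for the given torus $C$, not $C'(L)/\mu(G(L))$, and if $C'(L)\subsetneq C(L)$ were possible your $\partial$ would not even be defined on all of $C(L)$. Fortunately this case cannot occur, and you already have every ingredient needed to rule it out: over the field $L$ the homomorphism $\mu_L$ is smooth, hence open, so its image is an open subgroup scheme of $C_L$; an open subgroup is also closed (its complement is a union of open cosets), and $C_L$ is a torus, hence connected, so the image is all of $C_L$. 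Thus $\mu_L$ is surjective, and since its fibres are smooth and non-empty it is even surjective on $L^{\mathrm{sep}}$-points, so the sequence $1\to H(L^{\mathrm{sep}})\to G(L^{\mathrm{sep}})\to C(L^{\mathrm{sep}})\to 1$ is exact and the twisting argument applies verbatim. With that one-line repair, replacing the hedged case distinction, your proof is complete.
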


Let $k$, $\mathcal O$ and $K$ be as above in this Section.
Let $\mathcal K$ be a field containing $K$ and
$x: \mathcal K^* \to \mathbb Z$
be a discrete valuation vanishing on $K$.
Let $A_x$ be the valuation ring of $x$. Clearly,
$\mathcal O \subset A_x$.
Let
$\hat A_x$ and $\hat {\mathcal K}_x$
be the completions of $A_x$ and $\mathcal K$ with respect to $x$.
Let
$i:\mathcal K \hookrightarrow \hat {\mathcal K}_x$
be the inclusion. By Theorem
\ref{NisnevichCor}
the map
${\cal F}(\hat A_x)\to {\cal F}(\hat{\mathcal K}_x)$
is injective. We will identify
${\cal F}(\hat A_x)$
with its image under this map. Set
$$
{\cal F}_x(\mathcal K)=i_*^{-1}({\cal F}(\hat A_x)).
$$

The inclusion
$A_x\hookrightarrow \mathcal K$
induces a map
$
{\cal F}(A_x) \to {\cal F}(\mathcal K)
$
which is injective by Theorem
\ref{NisnevichCor}.
So both groups
${\cal F}(A_x)$ and ${\cal F}_x(\mathcal K)$
are subgroups of
${\cal F}(\mathcal K)$.
The following lemma shows that
${\cal F}_x(\mathcal K)$
coincides with the subgroup of $
{\cal F}(\mathcal K)$
consisting of all elements {\it unramified} at $x$.

\begin{lem}
\label{TwoUnramified}
${\cal F}(A_x)={\cal F}_x(\mathcal K)$.
\end{lem}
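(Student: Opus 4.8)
The plan is to prove the two inclusions $\mathcal F(A_x) \subseteq \mathcal F_x(\mathcal K)$ and $\mathcal F_x(\mathcal K) \subseteq \mathcal F(A_x)$ separately, exploiting the functoriality of $\mathcal F$ and the injectivity statements already available (Theorem~\ref{NisnevichCor}). For the first inclusion, take $\bar\alpha \in \mathcal F(A_x)$ with $\alpha \in C(A_x)$. The inclusion $A_x \hookrightarrow \hat A_x$ gives, by functoriality, that the image of $\bar\alpha$ in $\mathcal F(\mathcal K)$ maps into the image of $\mathcal F(\hat A_x)$ inside $\mathcal F(\hat{\mathcal K}_x)$; that is precisely what $i_*^{-1}(\mathcal F(\hat A_x))$ records, so $\bar\alpha \in \mathcal F_x(\mathcal K)$. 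This direction is essentially formal once one notes that $\hat A_x$ is the completion and the square relating $A_x \to \hat A_x$, $A_x \to \mathcal K$, $\mathcal K \to \hat{\mathcal K}_x$, $\hat A_x \to \hat{\mathcal K}_x$ commutes.

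The substantive direction is $\mathcal F_x(\mathcal K) \subseteq \mathcal F(A_x)$. First I would take $\bar\xi \in \mathcal F_x(\mathcal K)$, represented by $\xi \in C(\mathcal K)$, with the property that the image of $\bar\xi$ in $\mathcal F(\hat{\mathcal K}_x)$ lies in (the image of) $\mathcal F(\hat A_x)$. The goal is to descend from $\hat A_x$ back to $A_x$. The key input should be a faithfully flat (or at least suitably effective) descent argument for the functor $\mathcal F$ along $A_x \to \hat A_x$: the ring $A_x$ is a discrete valuation ring and $\hat A_x$ is its completion, so $A_x \to \hat A_x$ is faithfully flat, and the pair $(A_x, \hat A_x)$ together with $\mathcal K, \hat{\mathcal K}_x$ forms an arithmetic/Cartesian square of rings. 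Because $G$ and $C$ are smooth affine $\mathcal O$-group schemes, $C(-)$ and $G(-)$ each send such a square to a Cartesian square of sets/groups (this is the standard patching statement: $C(A_x) = C(\mathcal K) \times_{C(\hat{\mathcal K}_x)} C(\hat A_x)$, and likewise for $G$). From this I would deduce the analogous statement for the quotient functor $\mathcal F = C/\mu(G)$: an element of $C(\mathcal K)$ whose class in $\mathcal F(\mathcal K)$ becomes, over $\hat{\mathcal K}_x$, the image of something in $\mathcal F(\hat A_x)$, can — after modifying $\xi$ by an element of $\mu(G(\mathcal K))$, which does not change $\bar\xi$ — be arranged to have $\hat A_x$-integral image, hence by the Cartesian property be $A_x$-integral, giving $\bar\xi \in \mathcal F(A_x)$.

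The main obstacle will be the passage from the Cartesian square for the group-valued functors $C$ and $G$ to a Cartesian square, or at least the desired surjectivity statement, for the \emph{quotient} $\mathcal F$: quotients of Cartesian squares need not be Cartesian, and the obstruction is governed by the connecting map into $\mathrm{H}^1$ of the kernel $H = \ker(\mu)$, which by hypothesis is a reductive $\mathcal O$-group scheme. Here I expect to invoke the injectivity of the boundary map $\partial\colon C(L)/\mu(G(L)) \to \mathrm{H}^1_{\et}(L,H)$ from Lemma~\ref{BoundaryInj}, applied over $\mathcal K$ and $\hat{\mathcal K}_x$, together with the known injectivity/patching for $\mathrm{H}^1_{\et}(-,H)$ of a reductive group scheme over a discrete valuation ring versus its completion and fraction fields (a Nisnevich-type / Harder-type statement). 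Combining these identifies the obstruction class as trivial, so the integral lift over $A_x$ exists. An alternative, perhaps cleaner, route is to argue entirely at the level of $\mathcal F$: use Theorem~\ref{NisnevichCor} to view $\mathcal F(A_x)$, $\mathcal F(\hat A_x)$ as subgroups of $\mathcal F(\mathcal K)$, $\mathcal F(\hat{\mathcal K}_x)$ respectively, and show directly that $\mathcal F(\hat A_x) \cap \mathcal F(\mathcal K) = \mathcal F(A_x)$ inside $\mathcal F(\hat{\mathcal K}_x)$ using that $A_x = \hat A_x \cap \mathcal K$ inside $\hat{\mathcal K}_x$ and the patching for $C$ and $G$; this reduces the whole lemma to the elementary fact about the arithmetic square of rings plus the already-cited smoothness-based patching. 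Either way, the crux is the effectivity of descent for $\mathcal F$ along $A_x \to \hat A_x$, and I would organize the write-up around making that precise.
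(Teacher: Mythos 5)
The paper itself states Lemma \ref{TwoUnramified} without proof (the argument is deferred to the predecessor paper \cite{Pa2}), so there is no in-paper proof to compare with; judging your proposal on its own, the easy inclusion $\mathcal F(A_x)\subseteq \mathcal F_x(\mathcal K)$ is fine, but the hard inclusion is not actually established. You correctly isolate the crux: from $i(\xi)=\mu(\hat\gamma)\cdot\hat c$ with $\hat\gamma\in G(\hat{\mathcal K}_x)$, $\hat c\in C(\hat A_x)$ one must produce $g\in G(\mathcal K)$ with $\mu(g)^{-1}\xi\in C(\hat A_x)$ (and then $C(\mathcal K)\cap C(\hat A_x)=C(A_x)$ finishes), and you rightly observe that the Cartesian property of the arithmetic square for the group-valued functors $C$ and $G$ does not descend to the quotient $\mathcal F=C/\mu(G)$. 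But your two proposed ways of closing this gap do not close it. The ``alternative, cleaner route'' is exactly the non-sequitur you flagged two sentences earlier: $A_x=\mathcal K\cap\hat A_x$ plus patching for $C$ and $G$ gives nothing about $\mathcal F(\hat A_x)\cap\mathcal F(\mathcal K)$, because $\xi$ is only congruent to an integral element modulo $\mu(G(\hat{\mathcal K}_x))$, so there is nothing to intersect; some genuinely non-formal input (an approximation statement for $G(\mathcal K)$ inside $G(\hat{\mathcal K}_x)$, or a cohomological descent argument) is unavoidable.

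Your main route, via the boundary map into $H^1(-,H)$, can in principle be made to work, but as written it is missing its essential second half. Granting a formal-glueing/patching statement producing, from $\partial(\xi)\in H^1(\mathcal K,H)$ unramified at $x$, a class $\eta\in H^1(A_x,H)$ restricting to it (this is a Beauville--Laszlo type statement for $H$-torsors over the square $(A_x,\hat A_x;\mathcal K,\hat{\mathcal K}_x)$, which you neither prove nor cite precisely --- note that injectivity of $H^1(A_x,H)\to H^1(\mathcal K,H)$ is not what is needed here), you still cannot conclude that $\bar\xi\in\mathcal F(A_x)$: you must show that $\eta$ lies in the image of $\partial_{A_x}\colon C(A_x)\to H^1(A_x,H)$, i.e.\ that its image in $H^1_{\text{\'et}}(A_x,G)$ is trivial. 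Its image over $\mathcal K$ is trivial, so this requires the triviality of $\ker\bigl(H^1_{\text{\'et}}(A_x,G)\to H^1_{\text{\'et}}(\mathcal K,G)\bigr)$ --- a Nisnevich/Grothendieck--Serre theorem for the reductive group scheme $G$ over the discrete valuation ring $A_x$ --- an ingredient your sketch never invokes; only after that can one use Lemma \ref{BoundaryInj} over $\mathcal K$ (which, note, is stated only for fields) together with Theorem \ref{NisnevichCor} to identify $\bar\xi$ with the image of a class from $\mathcal F(A_x)$. The sentence ``combining these identifies the obstruction class as trivial'' conflates these distinct steps, and that is where the proof currently has a genuine gap.
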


Let $k$, $\mathcal O$ and $K$ be as above in this Section.
\begin{lem}
\label{KeyUnramifiedness}
Let $B \subset A$ be a finite extension of $K$-smooth algebras, which are domains
and each has dimension one.
Let $0 \neq f \in A$ and
%be such that $A/fA$ is finite over $K$.
% and reduced.
let $h \in B\cap fA$ be such that the induced map
$B/hB\to A/fA$ is an isomorphism.
Suppose
$hA=fA\cap J^{\prime\prime}$
for an ideal
$J^{\prime\prime} \subseteq A$
co-prime to the ideal $fA$.

%Suppose $N_{B/A}(f)=fg \in B$ for a certain $g \in B$ coprime
%with $f$. Suppose the composite map
%$A/N(f)A \to B/N(f)B \to B/fB$
%is an isomorphism.
Let $E$ and $F$ be the field of fractions of $B$ and $A$ respectively.
Let $\alpha \in C(A_f)$ be such that
$\bar \alpha \in {\cal F}(F)$
is $A$-unramified. Then, for
$\beta= N_{F/E}(\alpha)$,
the class
$\bar \beta \in {\cal F}(E)$
is $B$-unramified.
\end{lem}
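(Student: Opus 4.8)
The plan is to reduce the $B$-unramifiedness of $\bar\beta$ at an arbitrary height-one prime $\mathfrak q$ of $B$ to the $A$-unramifiedness of $\bar\alpha$, using the norm maps and their base-change property from Section~\ref{SectNorms}, together with the injectivity results of Theorems~\ref{NisnevichCor}–\ref{TwoUnramified}. Fix a height-one prime $\mathfrak q\subset B$, i.e.\ a closed point of $\Spec(B)$ (both $A$ and $B$ are one-dimensional), and let $v$ be the associated discrete valuation on $E$ with valuation ring $B_{\mathfrak q}$. By Lemma~\ref{TwoUnramified} it suffices to show that $\bar\beta$ lies in $\cF_v(E)=\cF(\widehat{B_{\mathfrak q}})$ viewed inside $\cF(\widehat E_v)$. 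Since $A$ is finite over $B$, the semilocal ring $A\otimes_B \widehat{B_{\mathfrak q}}$ is a finite product of complete discrete valuation rings $\widehat{A_{\mathfrak p_1}},\dots,\widehat{A_{\mathfrak p_r}}$ corresponding to the primes $\mathfrak p_1,\dots,\mathfrak p_r$ of $A$ over $\mathfrak q$ (after localizing away from the finitely many primes where $f$ or the conductor live; these cause no trouble, as explained below). Correspondingly $F\otimes_E\widehat E_v=\prod_j \widehat F_{\mathfrak p_j}$.

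The key point is to analyze the two cases depending on whether $\mathfrak q$ contains the image of $h$. If $v(h)=0$, then none of the $\mathfrak p_j$ divides $f$ either: indeed $hA=fA\cap J''$ with $J''$ coprime to $fA$, so $v_{\mathfrak p_j}(h)=\min(v_{\mathfrak p_j}(f),v_{\mathfrak p_j}(J''))$, and $v(h)=0$ forces $v_{\mathfrak p_j}(f)=0$ for all $j$ (a unit in $B_{\mathfrak q}$ pulls back to a unit in each $A_{\mathfrak p_j}$). Hence $\alpha\in C(A_f)$ is defined at each $\mathfrak p_j$, so $\alpha$ gives a class in $C(\widehat{A_{\mathfrak p_j}})$, and applying the norm map $N_{A\otimes_B\widehat{B_{\mathfrak q}}/\widehat{B_{\mathfrak q}}}$ together with the base-change property~(i$'$) of Section~\ref{SectNorms} to the square over $\widehat{B_{\mathfrak q}}\to \widehat E_v$, we get that $\beta=N_{F/E}(\alpha)$ extends to a class in $C(\widehat{B_{\mathfrak q}})$, whose image in $\cF(\widehat{B_{\mathfrak q}})\subset\cF(\widehat E_v)$ witnesses that $\bar\beta$ is unramified at $v$. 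In the remaining case $v(h)>0$: the isomorphism $B/hB\xrightarrow{\ \sim\ }A/fA$ shows that $hB$ is a radical-by-nothing ideal cutting out exactly the points of $A$ lying over $\{v(h)>0\}$ that also lie on $\{f=0\}$; since $h$ generates $\mathfrak q$-primarily and $B/hB\cong A/fA$ is reduced at $\mathfrak q$ (it's a field there, as $\mathfrak q$ is a closed point and the quotient is finite), there is exactly one prime $\mathfrak p$ of $A$ over $\mathfrak q$ with $f\in\mathfrak p$, it is unramified over $\mathfrak q$ with $v_{\mathfrak p}(f)=1$, and the residue extension is trivial; all other $\mathfrak p_j$ over $\mathfrak q$ have $f\notin\mathfrak p_j$. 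Thus $F\otimes_E\widehat E_v=\widehat F_{\mathfrak p}\times\prod_{j\ge2}\widehat F_{\mathfrak p_j}$ with $\widehat F_{\mathfrak p}\cong\widehat E_v$ canonically. By multiplicativity~(ii$'$) of the norm, $N_{F/E}(\alpha)$ over $\widehat E_v$ factors as the product of $\alpha|_{\widehat F_{\mathfrak p}}$ (transported to $\widehat E_v$ via the isomorphism, which is just the base change $i^*\colon \cF(\widehat A_{\mathfrak p})\to\cF(\widehat F_{\mathfrak p})$ precomposed with identification) and norms of the $\alpha|_{\widehat F_{\mathfrak p_j}}$ for $j\ge2$. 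The factors with $j\ge2$ are handled as in the first case (those $\mathfrak p_j$ don't divide $f$, so $\alpha$ is regular there, and the norm lands in $C(\widehat{B_{\mathfrak q}})$). For the distinguished factor, $A$-unramifiedness of $\bar\alpha$ means precisely $\bar\alpha\in\cF(\widehat A_{\mathfrak p})$ inside $\cF(\widehat F_{\mathfrak p})$ (Lemma~\ref{TwoUnramified} again, now on the $A$-side), and under the isomorphism $\widehat{A_{\mathfrak p}}\cong\widehat{B_{\mathfrak q}}$ this transports to $\cF(\widehat{B_{\mathfrak q}})$. Combining, $\bar\beta\in\cF(\widehat{B_{\mathfrak q}})\subset\cF(\widehat E_v)$, i.e.\ $\bar\beta$ is unramified at $v$.

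Finally, one must dispose of the finitely many bad primes $\mathfrak q$ of $B$ that were localized away above — those lying under a prime of $A$ in the support of the conductor of $A/B$ or for which the decomposition $A\otimes_B\widehat{B_{\mathfrak q}}$ is not a product of DVRs. Since $A$ and $B$ are normal one-dimensional domains and $A$ is finite over $B$, the extension is automatically étale (hence $A\otimes_B\widehat{B_{\mathfrak q}}$ is a product of complete DVRs) away from the finitely many branch points; for those branch points one argues identically, tracking ramification indices $e_j$ and residue degrees $f_j$ in the computation $v(h)=\sum e_j v_{\mathfrak p_j}(h)$ — the normalization property~(iii$'$) and base change still give that $N_{F/E}(\alpha)$ extends over $\widehat{B_{\mathfrak q}}$ whenever every $\mathfrak p_j$ over $\mathfrak q$ avoids $\{f=0\}$, and when some $\mathfrak p$ over $\mathfrak q$ hits $\{f=0\}$ the hypothesis $B/hB\cong A/fA$ still forces this $\mathfrak p$ to be the unique such and to be unramified with trivial residue extension, so the argument goes through verbatim. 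I expect the main obstacle to be exactly this bookkeeping: carefully using the hypothesis $hA=fA\cap J''$ with $J''$ coprime to $fA$ together with $B/hB\cong A/fA$ to pin down, at each closed point $\mathfrak q$ of $B$, that at most one point of $A$ over $\mathfrak q$ meets the divisor $\{f=0\}$ and that it does so transversally and unramifiedly over $B$ — this is what makes the norm of an unramified class unramified rather than acquiring ramification from wild splitting behavior. Everything else is a formal consequence of the base-change and multiplicativity properties of $N$ and the injectivity theorems~\ref{NisnevichCor} and~\ref{TwoUnramified}.
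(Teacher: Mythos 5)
Your proposal follows essentially the same route as the paper's proof: restrict attention to the height-one primes of $B$ dividing $h$, decompose $A\otimes_B\hat B_{\mathfrak q}$ (hence $F\otimes_E\hat E_v$) into the completions at the primes of $A$ over $\mathfrak q$, use the hypotheses $B/hB\cong A/fA$ and $hA=fA\cap J^{\prime\prime}$ to single out the unique prime over $\mathfrak q$ meeting $\{f=0\}$ and identify its completion with $\hat B_{\mathfrak q}$, handle that factor by the $A$-unramifiedness of $\bar\alpha$ and the remaining factors by regularity of $\alpha$ there, and finish with base change and multiplicativity of the norm together with Theorem \ref{NisnevichCor} and Lemma \ref{TwoUnramified}. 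Two small caveats: your claims that $B/hB$ is a field (hence reduced) at $\mathfrak q$ and that $v_{\mathfrak p}(f)=1$ are false in general, since $h$ need not be square-free; but they are also not needed, because the hypotheses already give exactly one prime $\mathfrak p$ over $\mathfrak q$ containing $f$, the equality $hA_{\mathfrak p}=fA_{\mathfrak p}$, and $\hat A_{\mathfrak p}=\hat B_{\mathfrak q}$ (a Nakayama argument in the complete local ring), which is all the argument uses. Likewise your closing discussion of ``bad primes'' is vacuous: since $A$ is regular of dimension one and finite over $B$, the ring $A\otimes_B\hat B_{\mathfrak q}$ is a product of complete discrete valuation rings at every closed point, ramified or not, so no primes need to be excluded.
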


\begin{proof}
The only primes at which $\bar \alpha$ could be ramified are those which divide
$hA$.
%$N(f):=N_{A/B}(f)$.
Let $\mathfrak p$ be one of them. Check that $\bar \alpha$ is
unramified at $\mathfrak p$.

To do this we consider all primes
$\mathfrak q_1, \mathfrak q_2, \dots, \mathfrak q_n$
in $A$ lying over
$\mathfrak p$.
Let
$\mathfrak q_1$
be the unique prime dividing $f$ and lying over
$\mathfrak p$.
Then
$$
A\otimes_B  \hat B_{\mathfrak p} = \hat A_{\mathfrak q_1} \times \prod_{i \neq 1} \hat A_{\mathfrak q_i}
$$
%$\hat B_{\mathfrak p}$
with
$\hat A_{\mathfrak q_1}=\hat B_{\mathfrak p}$.
If $F$, $E$ are the fields of fractions of $A$ and $B$
then
$$
F \otimes_{B} \hat B_{\mathfrak p}=\hat F_{\mathfrak q_1} \times \prod_{i \neq 1}  \hat F_{\mathfrak q_n}
$$
%$\times \dots \times \hat E_{\mathfrak q_n}$
and
$\hat F_{\mathfrak q_1}=\hat E_{\mathfrak p}$.
We will write
$\hat F_i$ for $\hat F_{\mathfrak q_i}$
and
$\hat A_i$ for $\hat A_{\mathfrak q_i}$.
Let
$$\alpha \otimes 1= (\alpha_1,\dots, \alpha_n)
\in C(\hat F_1) \times \dots \times  C(\hat F_n).$$
Clearly for $i \geq 2$ one has
$\alpha_i \in C(\hat A_i)$
and
$\alpha_1=\mu(\gamma_1)\alpha^{\prime}_1$
with
$\alpha^{\prime}_1 \in C(\hat A_1)=C(\hat B_{\mathfrak p})$
and
$\gamma_1 \in G(\hat F_1)=G(\hat E_{\mathfrak p})$.
Now
$\beta \otimes 1 \in C(\hat E_{\mathfrak p})$
coincides with the product
$$
\alpha_1N_{\hat F_2/\hat E_{\mathfrak p}}(\alpha_2)\cdots N_{\hat F_n/\hat E_{\mathfrak p}}(\alpha_n)=
\mu(\gamma_1)
[\alpha^{\prime}_1N_{\hat F_2/\hat E_{\mathfrak p}}(\alpha_2)\cdots N_{\hat F_n/\hat E_{\mathfrak p}}(\alpha_n)].
$$
Thus
$\overline {\beta \otimes 1}=\bar \alpha^{\prime}_1\overline {N_{\hat F_2/\hat E_{\mathfrak p}}(\alpha_2)}
\cdots \overline {N_{\hat F_n/\hat E_{\mathfrak p}}(\beta_n)} \in {\cal F}(\hat B_{\mathfrak p})$.
Let
$i: E \hra \hat E_{\mathfrak p}$
be the inclusion and
$i_*:{\cal F}(E) \to {\cal F}(\hat E_{\mathfrak p})$
be the induced map.
Clearly
$i_*(\bar \beta)=\overline {\beta \otimes 1}$
in
${\cal F}(\hat E_{\mathfrak p})$.
Now
Lemma
\ref{TwoUnramified}
shows that
the element
$\bar \beta \in {\cal F}(E)$
belongs to
${\cal F}(B_{\mathfrak p})$.
Hence $\bar \beta$ is $B$-unramified.

\end{proof}

\section{Specialization maps}
\label{SectSpecializationMaps}
Let $k$ be {\bf a field}, $\mathcal O$ be the $k$-algebra from Theorem
\ref{Aus_Buksbaum}
and $K$ be the fraction field of $\mathcal O$.
Let
$\mu: G\to C$
be the morphism of reductive $\mathcal O$-group schemes from Theorem
\ref{Aus_Buksbaum}.
We work in this section with {\it the category of commutative $K$-algebras} and with
the functor
\begin{equation}
\label{MainFunctor2}
\mathcal F: S\mapsto C(S)/\mu(G(S))
\end{equation}
defined on the category of $K$-algebras. So, we assume in this Section that
each ring from this Section is equipped with a distinguished $K$-algebra structure and
each ring homomorphism from this Section respects that structures.
Let $S$ be an $K$-algebra which is a domain and
let $L$ be its fraction field.
Define the {\it subgroup of $S$-unramified elements $\mathcal F_{nr,S}(L)$ of $\mathcal F (L)$} by formulae
(\ref{DefnUnramified}).

For a regular domain $S$ with the fraction field $\cal K$
and each height one prime $\mathfrak p$ in $S$
we construct {\bf specialization maps}
$s_{\mathfrak p}: {\cal F}_{nr, S}({\cal K}) \to {\cal F} {(K(\mathfrak p))}$,
where $\cal K$ is the field of fractions of $S$ and
$K(\mathfrak p)$
is the residue field of
$R$ at the prime $\mathfrak p$.

\begin{defn}
\label{SpecializationDef}
Let
$Ev_{\mathfrak p}: C(S_{\mathfrak p}) \to C(K(\mathfrak p))$
and
$ev_{\mathfrak p}: {\cal F}(S_{\mathfrak p}) \to {\cal F}(K(\mathfrak p))$
be the maps induced by the canonical $K$-algebra homomorphism
$S_{\mathfrak p} \to K(\mathfrak p)$.
Define a homomorphism
$s_{\mathfrak p}: {\cal F}_{nr, S}({\cal K}) \to {\cal F} {(K(\mathfrak p))}$
by
$s_{\mathfrak p}(\alpha)= ev_{\mathfrak p}(\tilde \alpha)$,
where
$\tilde \alpha$
is a lift of $\alpha$ to
${\cal F}(S_{\mathfrak p})$.
Theorem
\ref{NisnevichCor}
shows that the map $s_{\mathfrak p}$ is well-defined.
It is called the specialization map. The map $ev_{\mathfrak p}$ is called the evaluation
map at the prime $\mathfrak p$.

Obviously for
$\alpha \in C(S_\mathfrak p)$
one has
$s_{\mathfrak p}(\bar \alpha)=\overline {Ev_{\mathfrak p}(\alpha)}
\in {\cal F}(K(\mathfrak p))$.
\end{defn}

We need the following theorem and its corollary.
\begin{thm}[Homotopy invariance]
\label{HomInvNonram}
Let $S \mapsto {\cal F}(S)$ be the functor defined by the formulae
(\ref{MainFunctor2})
and let
${\cal F}_{nr,K[t]}(K(t))$ be defined by the formulae
(\ref{DefnUnramified}).
Let
$K(t)$ be the rational function field in one variable.
Then one has
$$
{\cal F}(K)={\cal F}_{nr,K[t]}(K(t)).
$$
\end{thm}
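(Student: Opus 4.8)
The plan is to prove the two inclusions separately, the inclusion $\subseteq$ (i.e. ``unramified $\Rightarrow$ constant'') being the substantial one. First I record the auxiliary equality $\mathcal{F}(K)=\mathcal{F}(K[t])$. Because $C$ is a torus, $C(K[t])=C(K)$ (reduce to the split case, where this reads $(K[t]^{\times})^{n}=(K^{\times})^{n}$). Because $\mu$ is a group-scheme homomorphism and $C$ is commutative, for any $g\in G(K[t])$ the element $\mu(g)$ lies in $C(K[t])=C(K)$, i.e. is constant, hence equals its value at $t=0$, namely $\mu(g(0))\in\mu(G(K))$; thus $\mu(G(K[t]))=\mu(G(K))$ and $\mathcal{F}(K[t])=\mathcal{F}(K)$. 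The natural map $\mathcal{F}(K)=\mathcal{F}(K[t])\to\mathcal{F}(K(t))$ is injective (compose it with evaluation at $t=0$ of the lift to $\mathcal{F}(K[t]_{(t)})$, which is well defined by Theorem \ref{NisnevichCor}, to recover the identity), and its image consists of $K[t]$-unramified classes, since a class coming from $C(K)$ obviously lifts to $\mathcal{F}(K[t]_{\mathfrak p})$ for every height one prime $\mathfrak p$. Hence $\mathcal{F}(K)\subseteq\mathcal{F}_{nr,K[t]}(K(t))$, and it remains to prove that every $K[t]$-unramified class $\bar{\xi}\in\mathcal{F}(K(t))$ lifts to $\mathcal{F}(K[t])$; by the auxiliary equality such a $\bar{\xi}$ is then constant.

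For the lifting step, fix a representative $\xi\in C(K(t))$ and a nonzero squarefree $f\in K[t]$ with $\xi\in C(K[t]_{f})$; then $\bar{\xi}$ is automatically unramified at every height one prime outside $V(f)=\{\mathfrak p_{1},\dots,\mathfrak p_{r}\}$, and at each $\mathfrak p_{i}$ the unramifiedness hypothesis yields $\eta_{i}\in C(K[t]_{\mathfrak p_{i}})$ and $g_{i}\in G(K(t))$ with $\xi=\mu(g_{i})\,\eta_{i}$. I would then assemble these local data into a global representative. The natural device is Beauville--Laszlo formal glueing along $V(f)$: writing $A_{i}$ for the completion of $K[t]_{\mathfrak p_{i}}$ and $L_{i}$ for its fraction field, a section of $C$ over $K[t]$ is the same as a pair consisting of an element of $C(K[t]_{f})$ and one of $\prod_{i}C(A_{i})$ agreeing in $\prod_{i}C(L_{i})$. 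Since $\mu$ is smooth, Hensel's lemma over each complete local ring $A_{i}$ makes $\mu$ surjective on $A_{i}$-points modulo $H=\ker\mu$, so one may modify the completed side by $\mu(G(A_{i}))$; producing the lift then amounts to finding a single $g\in G(K[t]_{f})$ whose image under $\mu$ matches $\mu(g_{i})^{-1}$ modulo $\mu(G(A_{i}))$ at every $\mathfrak p_{i}$ — a finite simultaneous-approximation problem on the affine curve $\mathbb{A}^{1}_{K}\setminus V(f)$ over the infinite field $K$ (the case $\dim X=0$, where $\mathcal{O}=K$, is trivial).

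The step I expect to be the main obstacle is precisely this reconciliation of the finitely many local trivializations $g_{i}$ into one global element, i.e. a local-to-global principle for the (non-sheaf) functor $\mathcal{F}$ over $\mathbb{A}^{1}_{K}$. Formal descent alone does not suffice: for $\mathbf{G}_{a}$ the local-to-global principle holds yet $\mathcal{F}(K[t])\neq\mathcal{F}(K)$, so it is the interaction of that principle with the torus-and-kernel input of the first paragraph that must be exploited. Here the machinery prepared earlier should enter: Theorem \ref{NisnevichCor} on injectivity over discrete valuation rings, the norm maps of Section \ref{SectNorms}, and their compatibility with unramifiedness, Lemma \ref{KeyUnramifiedness}. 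The crucial feature is that, although the norm principle for $\mu$ — and hence genuine transfers for $\mathcal{F}$ — is not available, the norm maps on $C$ together with their behaviour on unramified classes are enough to straighten the local discrepancies and produce a global lift $\tilde{\xi}\in C(K[t])$ of $\bar{\xi}$. Once $\tilde{\xi}$ is at hand, its class lies in $\mathcal{F}(K[t])=\mathcal{F}(K)$ and maps to $\bar{\xi}$ in $\mathcal{F}(K(t))$, so $\bar{\xi}$ is constant, establishing the remaining inclusion $\mathcal{F}_{nr,K[t]}(K(t))\subseteq\mathcal{F}(K)$.
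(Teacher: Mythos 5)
Your first two steps are fine: $C(K[t])=C(K)$ for a torus, hence $\mu(G(K[t]))=\mu(G(K))$ and ${\cal F}(K[t])={\cal F}(K)$, and the inclusion ${\cal F}(K)\subseteq{\cal F}_{nr,K[t]}(K(t))$ together with injectivity via specialization at $t=0$ (Theorem \ref{NisnevichCor}) is correct. But the theorem's entire content is the reverse inclusion, and there your text stops being a proof exactly where the proof has to happen: you reduce it to matching the finitely many local trivializations $g_i\in G(K(t))$ by a single $g\in G(K[t]_f)$ inside a Beauville--Laszlo patching diagram, declare this ``the main obstacle'', and then merely assert that ``the norm maps \dots are enough to straighten the local discrepancies''. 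No argument is given, and the machinery you point to does not do this job: the norms of Section \ref{SectNorms} and Lemma \ref{KeyUnramifiedness} transfer unramified classes along finite morphisms of one-dimensional algebras (they are used later, in the proof of Theorem \ref{Aus_Buksbaum}, \emph{after} homotopy invariance is available), and they provide no mechanism for killing the obstruction to your patching problem. That obstruction is genuinely nontrivial: after applying the injective boundary map of Lemma \ref{BoundaryInj} it lives in cohomology of the reductive kernel $H=\ker\mu$ over (open subsets of) $\Aff^1_K$, and the known proofs of statements of this type (cf.\ \cite{Pa2}, where this theorem is established for the infinite field $K=k(X)$, the case used here) require real input about $H$-torsors over the affine line in the spirit of \cite{RR} and \cite{GilleTorseurs}, which you neither invoke nor replace by an argument. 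So the proposal has a genuine gap at its central step; note also that the paper itself states Theorem \ref{HomInvNonram} without proof, so your sketch cannot lean on anything proved in this text.

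A secondary inaccuracy: smoothness of $\mu$ plus Hensel's lemma does \emph{not} make $G(A_i)\to C(A_i)$ surjective for the complete local rings $A_i$; the fibre of $\mu$ over a point of $C(A_i)$ is an $H$-torsor whose special fibre may have no rational point, so all one gets is the injection $C(A_i)/\mu(G(A_i))\hookrightarrow \textrm{H}^1(A_i,H)$. This does not by itself invalidate your bookkeeping (you only need to work modulo $\mu(G(A_i))$), but as phrased the claim is false, and it illustrates that the local-to-global step cannot be dismissed as formal glueing: the whole difficulty of the theorem is concentrated there.
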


\begin{cor}
\label{TwoSpecializations}
Let $S \mapsto {\cal F}(S)$ be the functor defined in
(\ref{MainFunctor}).
Let
$$
s_0, s_1: {\cal F}_{nr, K[t]}(K(t)) \to {\cal F}(K)
$$
be the specialization maps at zero and at one
(at the primes (t) and (t-1)). Then $s_0=s_1$.
\end{cor}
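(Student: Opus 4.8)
The plan is to deduce the corollary directly from the homotopy invariance theorem \ref{HomInvNonram} together with the injectivity statement of Theorem \ref{NisnevichCor}, by showing that for every unramified class both specializations return the constant class from which it originates.

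First I would fix $\alpha \in {\cal F}_{nr, K[t]}(K(t))$. By Theorem \ref{HomInvNonram} we have ${\cal F}(K) = {\cal F}_{nr, K[t]}(K(t))$, the identification being the map induced by the inclusion $K \hookrightarrow K(t)$; hence there is $\beta \in {\cal F}(K)$ whose image under ${\cal F}(K) \to {\cal F}(K(t))$ equals $\alpha$. Write $\beta_{(t)} \in {\cal F}(K[t]_{(t)})$ for the image of $\beta$ under the map ${\cal F}(K) \to {\cal F}(K[t]_{(t)})$ induced by $K \hookrightarrow K[t]_{(t)}$. Since the composite $K \hookrightarrow K[t]_{(t)} \hookrightarrow K(t)$ is the standard inclusion, $\beta_{(t)}$ maps to $\alpha$ in ${\cal F}(K(t))$, so $\beta_{(t)}$ is a lift of $\alpha$ to ${\cal F}(K[t]_{(t)})$. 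As $K[t]_{(t)}$ is a discrete valuation ring, Theorem \ref{NisnevichCor} says ${\cal F}(K[t]_{(t)}) \to {\cal F}(K(t))$ is injective, so this lift is the unique one; thus in the definition of $s_0$ one may take $\tilde\alpha = \beta_{(t)}$.

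Next I would compute $ev_{(t)}(\beta_{(t)})$. The residue field of $K[t]$ at $(t)$ is $K$, and the evaluation homomorphism $K[t]_{(t)} \to K$ at $t=0$ splits the inclusion $K \hookrightarrow K[t]_{(t)}$; by functoriality of ${\cal F}$ the induced composite ${\cal F}(K) \to {\cal F}(K[t]_{(t)}) \to {\cal F}(K)$ is the identity, whence $s_0(\alpha) = ev_{(t)}(\beta_{(t)}) = \beta$. The identical argument applied to the prime $(t-1)$ and the evaluation at $t=1$, which again splits $K \hookrightarrow K[t]_{(t-1)}$, gives $s_1(\alpha) = \beta$. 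Therefore $s_0(\alpha) = s_1(\alpha)$ for every $\alpha$, i.e.\ $s_0 = s_1$.

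There is essentially no obstacle left once Theorem \ref{HomInvNonram} is available: the whole content is that an unramified class over $K(t)$ is ``constant'', and the specialization of a constant class is that constant. The only point needing a little care is to identify the lift $\tilde\alpha$ used in the definition of $s_{\mathfrak p}$ with the evident class $\beta_{(t)}$ coming from $K$, and that is exactly what the injectivity in Theorem \ref{NisnevichCor} provides; note moreover that running the argument for $s_0$ alone re-proves that ${\cal F}(K) \to {\cal F}(K(t))$ is injective, so $\beta$ is in fact uniquely determined by $\alpha$.
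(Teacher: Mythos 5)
Your proof is correct and is exactly the argument the paper intends (the paper states Corollary \ref{TwoSpecializations} without a written proof, as an immediate consequence of Theorem \ref{HomInvNonram}): an $K[t]$-unramified class is constant by homotopy invariance, the constant class over $K[t]_{(t)}$ (resp.\ $K[t]_{(t-1)}$) serves as the lift in Definition \ref{SpecializationDef}, and evaluation at $t=0$ (resp.\ $t=1$) splits the inclusion of $K$, so both specializations return the same element of ${\cal F}(K)$. The appeal to Theorem \ref{NisnevichCor} for well-definedness of the lift is exactly as in the paper's Definition \ref{SpecializationDef}, so nothing further is needed.
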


\section{A purity theorem}
\label{Section_Aus_Buksbaum}
Let $k$ be a field. The main result of the present section is the following
\begin{thm}
\label{Aus_Buksbaum}
Let $\mathcal O$ be the semi-local ring of finitely many closed points
on a $k$-smooth irreducible affine $k$-variety $X$.
%where $k$ is {\bf a finite field}.
Let $K=k(X)$.
Let
$$\mu: \bG \to \mathbf C$$
be a smooth $\mathcal O$-morphism of reductive
$\mathcal O$-group schemes, with a torus $\mathbf C$.
%Set
%$H= ker (\mu)$ and suppose additionally that
%$H$
Suppose additionally that
the kernel of $\mu$ is a reductive $\mathcal O$-group scheme.
Then the following sequence
\begin{equation}
\label{Aus_Buks_sequence}
\mathbf C(\mathcal O)/\mu(\bG(\mathcal O)) \to
\mathbf C(K)/\mu(\bG(K)) \xrightarrow{\sum res_{\mathfrak p}} \bigoplus_{\mathfrak p}
\mathbf C(K)/[\mathbf C(\mathcal O_{\mathfrak p})\cdot \mu(\bG(K))]
\end{equation}
is exact in the middle term, where $\mathfrak p$ runs over
the height $1$ primes of $\mathcal O$ and $res_{\mathfrak p}$ is the natural map (the projection to the factor group).
\end{thm}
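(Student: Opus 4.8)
The plan is to show exactness in the middle term, i.e.\ that an element $\bar\xi \in \mathcal F(K) = \mathbf C(K)/\mu(\bG(K))$ lying in the kernel of $\sum res_{\mathfrak p}$ — equivalently, an element $\bar\xi$ that is $\mathcal O$-unramified in the sense of \eqref{DefnUnramified} — comes from $\mathcal F(\mathcal O)$. So fix such $\bar\xi$, represented by $\xi \in \mathbf C(K)$. First I would invoke Theorem \ref{equating3} (with $\textrm f$ chosen as a nonzero function vanishing at the closed points $x_i$ whose semi-local ring is $\mathcal O$, and with the morphism $\mu$) to produce the deformation diagram \eqref{DeformationDiagram0}: an irreducible affine $\mathcal X$, a finite surjective $U$-morphism $\sigma\colon \mathcal X \to \Aff^1\times U$, an essentially smooth $q_X\colon \mathcal X \to X$, a section $\Delta$, a function $f'$, a monic $h \in \mathcal O[t]$, and crucially the group-scheme isomorphisms $\Phi,\Psi$ of item (g) identifying $p_X^*(\mu)$ with $p_U^*(\mu_U)$ over $\mathcal X$, normalized so that $\Delta^*(\Phi) = \id$, $\Delta^*(\Psi)=\id$. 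This is what lets us pretend, \emph{on $\mathcal X$}, that the group data is constant, i.e.\ pulled back from $U$.

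Next I would transport $\xi$ to $\mathcal X$: the element $q_X^*(\xi) \in \mathbf C(k(\mathcal X))$, reinterpreted via $\Psi$ as an element over the "constant" torus $C_{\const}$, can be pushed down along the finite extension $\mathcal K / K(u)$ attached to $\sigma$ (where $u$ is the coordinate on $\Aff^1$). Using the norm maps of Section \ref{SectNorms} I set $\zeta_u := N_{\mathcal K/K(u)}\bigl(q_X^*(\xi)\bigr) \in \mathbf C(K(u))$. The key point, supplied by Lemma \ref{KeyUnramifiedness} applied to the one-dimensional extension governed by $h$ and $f'$ (whose hypotheses are exactly the content of Corollary \ref{ElementaryNisSquareNew_1}(v): $h\Gamma = (f')\cap I(\mathcal Z'')$ with $(f') + I(\mathcal Z'') = \Gamma$, and $\mathcal O[t]/(h) \cong \Gamma/(f')$), together with the $\mathcal O$-unramifiedness of $\bar\xi$, is that $\bar\zeta_u \in \mathcal F(K(u))$ is $K[u]$-unramified. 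By the homotopy invariance Theorem \ref{HomInvNonram}, $\bar\zeta_u$ is then constant, i.e.\ lies in $\mathcal F(K) \subset \mathcal F(K(u))$; hence by Corollary \ref{TwoSpecializations} its specializations $s_0(\bar\zeta_u)$ and $s_1(\bar\zeta_u)$ at $u=0$ and $u=1$ coincide in $\mathcal F(K)$.

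Now I would compute the two specializations using the splittings (c), (d), (e) of Theorem \ref{equating3}. Over $u=0$: $\mathcal D_0 = \sigma^{-1}(\{0\}\times U) = \Delta(U) \amalg \mathcal D_0'$ with $\mathcal D_0' \cap \mathcal Z' = \emptyset$; by multiplicativity of norms (condition (ii') of Section \ref{SectNorms}) the specialization of $\zeta_u$ at $0$ factors through $\Delta^*$ on the $\Delta(U)$-component — and there, because $\Delta^*(\Psi)=\id$ and $q_X\circ\Delta = \can$, it reproduces (the image in $\mathcal F(\mathcal O)$, evaluated at $K$, of) $\bar\xi$ itself — times a contribution from $\mathcal D_0'$ that, by item (d) and base change along $U \hookrightarrow U$, already extends to $\mathcal O$ (it is the norm of an $\mathcal O$-section since $\mathcal D_0'$ is finite over $U$ and disjoint from the locus where $\xi$ could be singular). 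Over $u=1$: $\mathcal D_1 \cap \mathcal Z' = \emptyset$ by (e), so $q_X^*(\xi)$ is regular along all of $\mathcal D_1$ (the only obstruction to regularity was the divisor $\{f'=0\} \supset$ ramification locus, cf.\ (iv)), whence $s_1(\bar\zeta_u)$ visibly lies in the image of $\mathcal F(\mathcal O)$. Combining, I define $\bar\xi_{\mathcal O} \in \mathcal F(\mathcal O)$ as in the informal (\ref{U_alpha})-type formula — roughly, $s_1$ of the norm minus the $\mathcal D_0'$-correction — and the equality $s_0 = s_1$ forces its image in $\mathcal F(K)$ to equal $\bar\xi$. \textbf{The main obstacle} is the bookkeeping of the norm over the two fibers $\mathcal D_0$, $\mathcal D_1$: one must check that every auxiliary component ($\mathcal D_0'$, the extra primes $\mathfrak q_i$ in Lemma \ref{KeyUnramifiedness}) genuinely contributes an element defined over $\mathcal O$ and not merely over $K$, which is where disjointness from $\mathcal Z'$ and $\Delta(U)$ — guaranteed by (c)–(e) — is used repeatedly and where the absence of a norm principle for $\mu$ itself makes the argument delicate, since we may only use norms on $\mathbf C$, never on $\bG$.
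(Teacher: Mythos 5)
Your plan is correct and follows essentially the same route as the paper's own proof: Theorem \ref{equating3} supplies the finite correspondence and the equating isomorphisms $\Phi,\Psi$, the norm along $\sigma$ together with Lemma \ref{KeyUnramifiedness} and Corollary \ref{ElementaryNisSquareNew_1} yields an $\Aff^1_K$-unramified class, and homotopy invariance plus the equality of specializations at $0$ and $1$, combined with the fibre decompositions (c)--(e), identifies the candidate lift of the form (\ref{U_alpha})/(\ref{U_alpha_2}) with $\bar\xi$ over $K$. The only point to make explicit is the preliminary spreading-out step (shrinking $X$ so that $\xi$ extends to $X_{\textrm{f}}$ and its class is $k[X]$-unramified), which the paper performs before invoking Theorem \ref{equating3}.
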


%\begin{rem}
%\label{Aus_Buks_totally_exact}
%An extension of this theorem is proved in section \ref{Section_Aus_Buksbaum_3}.
%After the proof of Theorem \ref{MainThm1} will be completed it will have the following
%corollary: the sequence (\ref{Aus_Buks_sequence}) is exact.
%In fact, the exactness at the lefthand side  term is a direct consequence
%Theorem \ref{MainThm1}. The exactness at the righthand side term is
%proved in
%\cite{C-T-S}.
%\end{rem}

\begin{defn}
Let $a\in \mathbf C(k(X))$. Its class
$\bar a \in \bar {\mathbf C}(k(X)):=\mathbf C(K)/\mu(\bG(K))$
is called unramified at a height $1$ prime ideal $\mathfrak p$ of $k[X]$,
if the element $\bar a$ is in the image of the group
$\bar{\mathbf C}(\mathcal O_{\mathfrak p})$.
%$res_{\mathfrak p}(\bar a)=0$.
Let
$S\subset k[X]$ be a multiplicative system.
%$X'\subset X$ be an open subscheme.
The class
$\bar a \in \bar {\mathbf C}(k(X))$
is called $k[X]_S$~-unramified, if it is unramified at any codimension one prime ideal of $k[X]_S$.
{\bf Particularly, the class $\bar a \in \bar {\mathbf C}(k(X))$ is called $X$-unramified,
if it is unramified at any codimension one prime ideal of $k[X]$.
}
\end{defn}
%Let $\varphi: Y\to X$ be a flat morphism of $k$-schemes with an irreducible reduced scheme $Y$.
%Then
The following lemma is obvious.
\begin{lem}
\label{UnramifiedToUnramified}
Let $\phi: Y\to X$ be a smooth morphism of smooth irreducible affine $k$-varieties.
%and let $S\subset k[Y]$ be a multiplicative system.
%$k[X] \xrightarrow{\varphi^*} k[Y]$ be an injective $k$-algebra morphism and let $B$ be a regular domain with a fraction field $L$.
This morphism induces an obvious map
$\bar \varphi^*: \bar{\mathbf C}(k(X))\to \bar {\mathbf C}(k(Y))$,
which takes $X$-unramified elements to $Y$-unramified elements.
If $S\subset k[Y]$ be a multiplicative system, then
the homomorphism
$\bar \varphi^*$
takes $X$-unramified elements to $k[Y]_S$~-unramified elements.
\end{lem}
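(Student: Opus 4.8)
The plan is to reduce the statement to a prime-by-prime verification and to exploit flatness of $\varphi$. First I would fix a height one prime $\mathfrak q$ of $k[Y]$, let $y\in Y$ be the corresponding codimension one point and set $x:=\varphi(y)\in X$; the goal then becomes to show that $\bar\varphi^{*}(\bar a)$ lies in the image of $\bar{\mathbf C}(\mathcal O_{Y,\mathfrak q})\to\bar{\mathbf C}(k(Y))$. (Note that $\varphi$, being smooth with $X$ irreducible, has open dense image, hence is dominant, so that $k(X)\hookrightarrow k(Y)$ and $\bar\varphi^{*}$ is defined.) The crucial observation is that a smooth morphism is flat, so $\varphi^{\#}\colon\mathcal O_{X,x}\to\mathcal O_{Y,y}$ is a flat local homomorphism of Noetherian local rings, and the dimension formula $\dim\mathcal O_{Y,y}=\dim\mathcal O_{X,x}+\dim(\mathcal O_{Y,y}/\mathfrak m\mathcal O_{Y,y})$ (with $\mathfrak m$ the maximal ideal of $\mathcal O_{X,x}$), combined with $\dim\mathcal O_{Y,y}=1$, forces $\dim\mathcal O_{X,x}\in\{0,1\}$. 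So I would split into the only two possible cases: either $x$ is the generic point of $X$ (so $\mathcal O_{X,x}=k(X)$), or $x$ is a codimension one point, corresponding to a height one prime $\mathfrak p=(\varphi^{\#})^{-1}(\mathfrak q)$ of $k[X]$ with $\mathcal O_{X,x}=\mathcal O_{X,\mathfrak p}$.

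In the first case I would simply note that $\mathcal O_{Y,y}$ is a $k(X)$-algebra, so the inclusion $k(X)\hookrightarrow k(Y)$ factors through $\mathcal O_{Y,y}$; by functoriality of the quotient functor $R\mapsto\bar{\mathbf C}(R)=\mathbf C(R)/\mu(\bG(R))$, the element $\bar\varphi^{*}(\bar a)$, being the image of $\bar a\in\bar{\mathbf C}(k(X))$, is the image of an element of $\bar{\mathbf C}(\mathcal O_{Y,y})$, hence unramified at $\mathfrak q$. In the second case I would use $X$-unramifiedness of $\bar a$ to pick $\bar b\in\bar{\mathbf C}(\mathcal O_{X,\mathfrak p})$ mapping to $\bar a$, and then apply $\bar{\mathbf C}(-)$ to the commutative square of $X$-algebra maps with top row $\mathcal O_{X,\mathfrak p}\hookrightarrow k(X)$, bottom row $\mathcal O_{Y,y}\hookrightarrow k(Y)$, and vertical maps induced by $\varphi^{\#}$ (this square commutes because all four maps arise from $k[X]\to k[Y]$ by localization). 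This shows $\bar\varphi^{*}(\bar a)$ is the image of $\bar b$ under $\bar{\mathbf C}(\mathcal O_{X,\mathfrak p})\to\bar{\mathbf C}(\mathcal O_{Y,y})\to\bar{\mathbf C}(k(Y))$, so it again lies in the image of $\bar{\mathbf C}(\mathcal O_{Y,y})$ and is unramified at $\mathfrak q$. As $\mathfrak q$ was arbitrary, $\bar\varphi^{*}(\bar a)$ is $Y$-unramified.

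For the last assertion I would observe that if $S\subset k[Y]$ is a multiplicative system, the codimension one primes of $k[Y]_S$ are precisely the extensions of the height one primes $\mathfrak q$ of $k[Y]$ with $\mathfrak q\cap S=\emptyset$, and for such $\mathfrak q$ one has $(k[Y]_S)_{\mathfrak qS^{-1}}=k[Y]_{\mathfrak q}=\mathcal O_{Y,\mathfrak q}$; thus $k[Y]_S$-unramifiedness is merely a sub-condition of the $Y$-unramifiedness already established, and nothing further is needed. I do not expect a genuine obstacle here --- the lemma is indeed ``obvious'' --- but the one point that deserves care is justifying the dichotomy for $x$, which is exactly where flatness of $\varphi$ enters, through the dimension formula for flat local homomorphisms; the compatibility of the various localization squares, needed to invoke functoriality of $\bar{\mathbf C}$, is then purely formal.
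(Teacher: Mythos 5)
Your proof is correct, and since the paper simply declares this lemma obvious and gives no argument, your write-up supplies exactly the standard reasoning that is being taken for granted: flatness of the smooth morphism $\varphi$ forces the image of a codimension one point of $Y$ to have codimension at most one in $X$, and in either case functoriality of $\bar{\mathbf C}$ together with the factorization $\mathcal O_{X,x}\to\mathcal O_{Y,y}$ yields unramifiedness, with the $k[Y]_S$ statement following because its codimension one local rings are among those of $Y$. No gaps; the one point needing care (the dichotomy for $x$ via the dimension formula for flat local homomorphisms) is handled correctly.
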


\begin{proof}[Proof of Theorem \ref{Aus_Buksbaum}]
Assume firstly that $\mu$ is "constant", i.e
there are a reductive group $\bG_0$, a torus $\mathbf C_0$ over the field $k$ and an algebraic $k$-group morphism $\mu_0$
and $U$-group schemes isomorphisms
$$\Phi: \bG_{0,U}=\bG_0\times_{Spec(k)} U \to \bG \ \ \text{and} \ \ \Psi: \mathbf C_{0,U}=\mathbf C_0\times_{Spec(k)} U \to \mathbf C$$
such that
$\Psi \circ \mu_{0,U}=\mu\circ \Phi$.

Let $a_K \in \mathbf C(k(X))$ be such that its class in $\bar {\mathbf C}(K)$ is $\mathcal O$-unramified.
Then there is a non-zero function $\text{f}\in k[X]$ such that the element $a_K$
is defined over $X_{\text{f}}$, that is
there is given an element
$a \in \mathbf C(k[X_{\text{f}}])$
for a non-zero function
$\text{f} \in k[X]$
such that
the image of $a$ in $\mathbf C(K)$ coincides with the element $a_K$.
Shrinking $X$ we may assume further that $\text{f}$ vanishes at each $x_i$'s and
the $k$-algebra $k[X]/(\textrm{f})$ {\bf is reduced}.
Shrinking $X$ once again we may and will assume also that
$\bar a \in \bar {\mathbf C}(k[X]_\textrm{f})$
is $k[X]$-unramified.
By Theorem
\ref{equating3}
there is a diagram of the form
(\ref{DeformationDiagram0})
together with
with the scheme $\mathcal X$, the morphisms $q_U$, $\sigma$ and $q_X$,
and the function $f^{\prime} \in q^*_X(\textrm{f} \ )k[\mathcal X]$,
which enjoys the properties $(a)$ to $(g)$ from that Theorem.
From now on and till the end of this proof
we will use the notation from Theorem \ref{equating3}.

The morphism $\sigma$ from that theorem
is finite surjective and the schemes $\Aff^1_U$ and $\mathcal X$ are regular.
Thus by a theorem of Grothendieck
\cite[Thm. 18.17]{E}
the morphism $\sigma$ is flat and finite. Thus any base change of $\sigma$ is finite and flat.
%Let $a\in \mathbf C(X_{\textrm{f}})$ be such that the class
%$\bar a\in \bar {\mathbf C}(X_{\textrm{f}})$
%is $X$-unramified.
Set $\alpha:=q^*_f(a)\in \mathbf C(\mathcal X_f)$
where $q_f: \mathcal X_f \to X_{\textrm{f}}$ is the restriction of $f$ to $\mathcal X$ and set
\begin{equation}
\label{U_alpha}
a_U:=N_{\mathcal D_1/U}(\alpha|_{\mathcal D_1})\cdot N_{\mathcal D^{\prime}_0/U}(\alpha|_{\mathcal D^{\prime}_0})^{-1} \in \mathbf C(U).
\end{equation}

\begin{clm}
Let $\eta_U: Spec(k(X)) \to Spec(\mathcal O)=U$ and $\eta: Spec(k(X)) \to X_{\textrm{f}}$
be the generic points of $U$ and $X_{\textrm{f}}$
respectively. Then
$\eta^*_U(\bar {a_U})=\eta^*(\bar a) \in \bar {\mathbf C}(k(X))$.
\end{clm}
Since $\eta^*(a)=a_K$, this Claim completes the proof of Theorem \ref{Aus_Buksbaum} in the constant case.
To prove the Claim
consider the scheme $\mathcal X$ and its closed and open subschemes as $U$-schemes via
the morphism $q_U$. Set $K=k(X)$. Taking the base change of
$\mathcal X$, $\Aff^1_U$ and $\sigma$ via the
morphism
$\eta_U: Spec(K) \to U$
we get a morphism of the $K$-shemes
$\Aff^1_{K} \xleftarrow{\sigma_{K}} \mathcal X_{K}$.
Recall that
the class
$\bar a \in \bar {\mathbf C}(X_{\textrm{f}})$ is $X$-unramified.
%The morphism
%$q^*_X: k[X] \to k[\mathcal X]$ is of the form as the homomorphism
%from
By Lemma
\ref{UnramifiedToUnramified}
the class
$\bar \alpha\in \bar {\mathbf C}(\mathcal X_f)$
is
$\mathcal X$-unramified.
Hence its image $\bar \alpha_{K}$ in $\bar {\mathbf C}(K(\mathcal X_K))$
is $X_K$-unramified too.
%The properties
%$(a)$, $(f)$ and $(c)$
%of the morphism $\sigma$
The items $(ii)$, $(v)$ of Corollary
\ref{ElementaryNisSquareNew_1}
and Lemma
\ref{KeyUnramifiedness} show that for the element
$\beta_t:=N_{K(\mathcal X_K)/K(\Aff^1_K)}(\alpha_K) \in \mathbf C(K(t))$
the class $\bar \beta_t \in \bar {\mathbf C}(K(t))$
is $\Aff^1_K$-unramified.
By Theorem
\ref{HomInvNonram}
the class $\bar \beta_t$ is constant,
t.e. it comes from the field $K$.
By Corollary
\ref{TwoSpecializations}
its specializations at the $K$-points
$0$ and $1$ of the affine line $\Aff^1_K$ coincide:
$s_0(\bar \beta_t)= s_1(\bar \beta_t) \in \bar {\mathbf C}(K)$.
The properties
$(d)$,$(c)$ and $(e)$ and the equality $q_X\circ \Delta=can$
from Theorem
\ref{equating3}
show that
$\mathcal D_{1,K}, \mathcal D^{\prime}_{0,K}, \Delta(Spec(K)) \subset (\mathcal X_f)_K$.
Thus there is a Zariski open neighborhood $V$ of the $K$-points $0$ and $1$ in $\Aff^1_K$
such that
$W:=(\sigma_K)^{-1}(V) \subset (\mathcal X_f)_K$.
%Hence the element
%$\beta_t$ is "regular" in that $V$.
Hence for
$\beta_V:=N_{W/V}(\alpha|_W)$,
one has the equality $\beta_V=\beta_t$ in $\mathbf C(K(t))$.
Thus one has equalities
$$\overline {\beta(1)}=s_1(\bar \beta_t)=s_0(\bar \beta_t)=\overline {\beta(0)}$$
(see the remark at the end of Definition \ref{SpecializationDef}).
By the properties $(i')$, $(ii')$ and $(iii')$
of the norm maps (see Section \ref{SectNorms}) one has equalities
$$N_{\mathcal D_{1,K}/K}(\alpha|_{\mathcal D_{1,K}})= \beta(1) \ \ \text{and} \ \  \beta(0)=N_{\mathcal D_{0,K}/K}(\alpha|_{\mathcal D_{0,K}})=
N_{\mathcal D^{\prime}_{0,K}/K}(\alpha|_{\mathcal D^{\prime}_{0,K}}) \cdot s^*_K(\alpha_K)
$$
By the base change property of the norm maps one has the equality
$$\eta^*_U(a_U)=N_{\mathcal D_{1,K}/K}(\alpha|_{\mathcal D_{1,K}})\cdot [N_{\mathcal D^{\prime}_{0,K}/K}(\alpha|_{\mathcal D^{\prime}_{0,K}})]^{-1}$$
Hence $s^*_K(\bar \alpha_K)=\eta^*_U(\bar a_U)$ in $\bar {\mathbf C}(k(X))$.
Finally, the composite map
$Spec(K)\xrightarrow{\Delta_K} (\mathcal X_f)_K \to \mathcal X_f \xrightarrow{q_f} X_{\textrm{f}}$
coincides with the canonical map
$\eta: Spec(K) \to X_{\textrm{f}}$.
Hence
$s^*_K(\bar \alpha_K)=\eta^*(\bar a)$, which proves the Claim.
Whence the Theorem \ref{Aus_Buksbaum} in the constant case.

{\it In the general case} there are two functors on the category of $\mathcal X$-schemes. Namely,
$\bar C$
and
${_{U}}\bar C$.
If $r: \mathcal Y \to \mathcal X$ is a scheme morphism, then
$\bar C(\mathcal Y):=C(\mathcal Y)/(\mu(G(\mathcal Y)))$
and
${_{U}}\bar C(\mathcal Y):= {_{U}}C(\mathcal Y)/(\mu({_{U}}G(\mathcal Y)))$.
Here $\mathcal Y$ is regarded as an $X$-scheme via the morphism
$q_X\circ r$ and is regarded as an $U$-scheme via the morphism
$q_U\circ r$. The $\mathcal X$-group scheme isomorphisms
$\Phi$ and $\Psi$ from Theorem
\ref{equating3}
induce a group isomorphism
$$\bar \Psi_{\mathcal Y}: {_{U}}\bar C(\mathcal Y) \to \bar C(\mathcal Y),$$
which respect to $\mathcal X$-schemes morphisms.
Moreover, if the scheme $U$ is regarded as an $\mathcal X$-scheme via the morphism
$\Delta$, then the isomorphism $\bar \Psi_{\mathcal Y}$ is the identity.
And similarly for any $U$-scheme $g: W \to U$ regarded as an $\mathcal X$-scheme via
the morphism $\Delta\circ g$ the the isomorphism $\bar \Psi_{W}$ is the identity.

Set $\alpha:=q^*_f(a)\in \mathbf C(\mathcal X_f)$
where $q_f: \mathcal X_f \to X_{\textrm{f}}$ is above in this proof. Let
$_{_{U}}\alpha \in {_{U}}C(\mathcal X)$
be a unique element such that
$\bar \Psi_{\mathcal X}(_{_{U}}\alpha)=\alpha$.
Set
\begin{equation}
\label{U_alpha_2}
_{_{U}}a:=N_{\mathcal D_1/U}((_{_{U}}\alpha)|_{\mathcal D_1})\cdot N_{\mathcal D^{\prime}_0/U}((_{_{U}}\alpha)|_{\mathcal D^{\prime}_0})^{-1}
\in \ _{_{U}}\mathbf C(U) \ \ \text{and} \ \ a_U:=\Psi_U(_{_{U}}a) \in \mathbf C(U)
\end{equation}
We left to the reader to proof the following Claim
\begin{clm}
Let $\eta_U: Spec(k(X)) \to Spec(\mathcal O)=U$ and $\eta: Spec(k(X)) \to X_{\textrm{f}}$
be as above in this proof. Then
$$\eta^*_U(\bar {a_U})=\eta^*(\bar a) \in \bar {\mathbf C}(k(X))$$
\end{clm}
%(One should replace references to Theorem
%\ref{ElementaryNisSquare_1}
%with references to Theorem
%\ref{equating3}).
Since $\eta^*(a)=a_K$, this Claim completes the proof of Theorem \ref{Aus_Buksbaum}.

\end{proof}

\section{One more purity theorem}
\label{OneMorePurity}
%In this section we reduce another purity theorem for reductive group schemes to Theorem
%\ref{equating3}.
%Let $k$ be {\bf a finite field}. Let $\mathcal O$ be the semi-local ring of finitely many {\bf closed} points
%on a $k$-smooth {\bf irreducible affine} $k$-variety $X$ and let $K$ be the field of fractions of $\mathcal O$.
The main result of this section is theorem \ref{PurityForSubgroup}.
Let $k$, $\mathcal O$ and $K$ be as in Theorem
\ref{Aus_Buksbaum}.
Let $G$ be a semi-simple $\mathcal O$-group scheme.
Let
$i: Z \hra G$ be a closed subgroup scheme of the center $Cent(G)$.
{\bf It is known that $Z$ is of multiplicative type}.
Let $G'=G/Z$ be the factor group,
$\pi: G \to G'$ be the projection.
It is known that $\pi$ is finite surjective and faithfully flat. Thus
the sequence of $\mathcal O$-group schemes
\begin{equation}
\label{ZandGndGprime}
\{1\} \to Z \xra{i} G \xra{\pi} G^{\prime} \to \{1\}
\end{equation}
induces an exact sequence of group sheaves in $\text{fppt}$-topology.
Thus for every $\mathcal O$-algebra $R$ the sequence
(\ref{ZandGndGprime})
gives rise to a boundary operator
\begin{equation}
\label{boundary}
\delta_{\pi,R}: G'(R) \to \textrm{H}^1_{\text{fppf}}(R,Z)
\end{equation}
One can check that it is a group homomorphism
(compare \cite[Ch.II, \S 5.6, Cor.2]{Se}).
Set
\begin{equation}
\label{AnotherFunctor}
{\cal F}(R)= \textrm{H}^1_{\text{fttf}}(R,Z)/ Im(\delta_{\pi,R}).
\end{equation}
Clearly we get a functor on the category of $\mathcal O$-algebras.
\begin{thm}
\label{PurityForSubgroup}
Let $\mathcal O$ be the semi-local ring of finitely many {\bf closed} points
on a $k$-smooth irreducible {\bf affine} $k$-variety $X$.
Let $G$ be a semi-simple $\mathcal O$-group scheme.
Let
$i: Z \hra G$ be a closed subgroup scheme of the center $Cent(G)$.
Let
${\cal F}$
be the functor
on the category
$\mathcal O$-algebras
given by
(\ref{AnotherFunctor}).
%satisfies purity for the ring $\mathcal O$
%regarded as an
%$\mathcal O$-algebra
%via the identity map.
Then the sequence
\begin{equation}
\label{Aus_Buks_sequence_2}
{\cal F}(\mathcal O) \to
{\cal F}(K) \xrightarrow{\sum can_{\mathfrak p}} \bigoplus_{\mathfrak p}
{\cal F}(K)/Im[{\cal F}(\mathcal O_{\mathfrak p}) \to {\cal F}(K)]
\end{equation}
is exact, where $\mathfrak p$ runs over
the height $1$ primes of $\mathcal O$ and $can_{\mathfrak p}$ is the natural map (the projection to the factor group).
\end{thm}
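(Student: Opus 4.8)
The plan is to deduce Theorem~\ref{PurityForSubgroup} from Theorem~\ref{Aus_Buksbaum} (= Theorem~A) by an embedding trick that trades the multiplicative-type group $Z$ for a torus. First I would choose, over $\mathcal O$, a short exact sequence of $\mathcal O$-group schemes
$$1 \to Z \xrightarrow{j} T_1 \xrightarrow{\nu} T_2 \to 1$$
in which $T_1$ is a quasi-trivial $\mathcal O$-torus, i.e.\ a finite product of Weil restrictions of $\mathbb G_m$ along finite \'etale $\mathcal O$-algebras; then $T_2 = T_1/Z$ is again an $\mathcal O$-torus. Such a resolution exists because $Z$ is of multiplicative type: write $Z = \mathbf D(M)$ for a finitely generated sheaf of abelian groups $M$ on the small \'etale site of $\mathcal O$, choose a surjection $P \twoheadrightarrow M$ from a finitely generated permutation module $P$, note that $\ker$ is then a lattice, and apply Cartier duality to $0 \to \ker \to P \to M \to 0$ with $T_1 = \mathbf D(P)$.

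Next I would form the reductive $\mathcal O$-group scheme $H := (G\times T_1)/Z$, where $Z$ sits centrally via $z \mapsto (i(z), j(z)^{-1})$. Since the two resulting embeddings $Z \hookrightarrow G \hookrightarrow H$ and $Z \hookrightarrow T_1 \hookrightarrow H$ coincide, and the images of $G$ and of $T_1$ commute in $H$, one obtains two short exact sequences of $\mathcal O$-group schemes
$$1 \to G \to H \xrightarrow{\bar\mu} T_2 \to 1 \qquad\text{and}\qquad 1 \to T_1 \to H \xrightarrow{\bar\pi} G' \to 1,$$
where $\bar\mu\big(\overline{(g,t)}\big) = \nu(t)$ and $\bar\pi\big(\overline{(g,t)}\big) = \pi(g)$. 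In the first sequence $H$ is reductive, $T_2$ is a torus, $\bar\mu$ is smooth as a morphism (it is an fppf $G$-torsor and $G$ is $\mathcal O$-smooth), and $\ker(\bar\mu) = G$ is semi-simple, hence reductive; so Theorem~\ref{Aus_Buksbaum} applies to $\bar\mu\colon H \to T_2$ and gives exactness in the middle of
$$T_2(\mathcal O)/\bar\mu(H(\mathcal O)) \to T_2(K)/\bar\mu(H(K)) \xrightarrow{\sum res_{\mathfrak p}} \bigoplus_{\mathfrak p} T_2(K)\big/\big[T_2(\mathcal O_{\mathfrak p})\cdot\bar\mu(H(K))\big].$$

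It remains to identify, naturally in $R \in \{\mathcal O\}\cup\{\mathcal O_{\mathfrak p}\}\cup\{K\}$, the group $\mathcal F'(R) := T_2(R)/\bar\mu(H(R))$ with the functor $\mathcal F(R) = H^1_{\mathrm{fppf}}(R,Z)/Im(\delta_{\pi,R})$ of~(\ref{AnotherFunctor}); transporting the displayed sequence along such isomorphisms, and rewriting $T_2(K)/[T_2(\mathcal O_{\mathfrak p})\cdot\bar\mu(H(K))]$ as $\mathcal F(K)/Im[\mathcal F(\mathcal O_{\mathfrak p})\to\mathcal F(K)]$, then yields precisely~(\ref{Aus_Buks_sequence_2}). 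For each of these rings $R$ one has $H^1_{\mathrm{fppf}}(R,T_1) = 0$, since $T_1$ is quasi-trivial and the finite \'etale $R$-algebras involved are all semi-local; hence the connecting map of $1\to Z\to T_1\to T_2\to 1$ induces an isomorphism $T_2(R)/\nu(T_1(R)) \xrightarrow{\ \sim\ } H^1_{\mathrm{fppf}}(R,Z)$ and, from the second sequence, $\bar\pi\colon H(R)\to G'(R)$ is onto. A cocycle computation — given $h \in H(R)$, lift $\bar\pi(h)$ along $\pi$ to $\tilde g \in G(R')$ over an fppf cover $R'$, write $h = \tilde t\,\tilde g$ in $H(R')$ with $\tilde t \in T_1(R')$, and use $\bar\mu|_G = 1$ — shows that the class of $\bar\mu(h)$ in $H^1_{\mathrm{fppf}}(R,Z)$ equals $\delta_{\pi,R}(\bar\pi(h))$ up to sign, so that the image of $\bar\mu(H(R))$ in $H^1_{\mathrm{fppf}}(R,Z)$ is exactly $Im(\delta_{\pi,R})$; dividing out gives $\mathcal F'(R) \cong \mathcal F(R)$, functorially. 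I expect this last identification — matching the two boundary maps and pinning down the vanishing $H^1_{\mathrm{fppf}}(-,T_1)=0$ on the relevant semi-local rings — to be the only real point; everything else is formal once Theorem~\ref{Aus_Buksbaum} is available.
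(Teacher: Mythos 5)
Your proposal is correct and follows essentially the same route as the paper: there one embeds $Z$ into a permutation torus $T^{+}=R_{A/\mathcal O}(\mathbb G_{m,A})$, forms $G^{+}=(G\times T^{+})/Z$ and $T=T^{+}/Z$, identifies $\mathcal F(R)\cong T(R)/\mu^{+}(G^{+}(R))$ functorially using $H^1_{\text{fppf}}(R,T^{+})=\ast$ for the relevant semi-local rings, and then applies Theorem \ref{Aus_Buksbaum} to $\mu^{+}\colon G^{+}\to T$. Your cocycle computation matching the two boundary maps is just an explicit version of the paper's diagram chase, so the two arguments coincide in substance.
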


%\begin{rem}
%\label{Aus_Buks_totally_exact_2}
%An extension of this theorem is proved in section \ref{Section_Aus_Buksbaum_3}.
%\end{rem}

\begin{proof}[Proof of Theorem \ref{PurityForSubgroup}]
The group $Z$ is of  multiplicative type.
So we can find a finite \'{e}tale $\mathcal O$-algebra $A$ and
a closed embedding
$Z \hra R_{A/{\mathcal O}}(\mathbb G_{m,\ A})$
into the permutation torus
$T^{+}=R_{A/\mathcal O}(\mathbb G_{m,\ A})$.
Let
$G^{+}=(G \times T^{+})/Z$
and
$T=T^{+}/Z$,
where
$Z$ is embedded in
$G \times T^{+}$
diagonally.
Clearly
$G^{+}/G=T$.
Consider a commutative diagram
$$
\xymatrix {
{}           & \{1\}                       & \{1\} \\
{}           & {G'} \ar[r]^{id}  \ar[u]          & {G'}   \ar[u]          \\
\{1\} \ar[r] & {G} \ar[r]^{j^+} \ar[u]^{\pi} & {G^+} \ar[r]^{\mu^+} \ar[u]^{\pi^+} & {T} \ar[r] & \{1\} \\
\{1\} \ar[r] & {Z} \ar[r]^{j} \ar[u]^{i} & {T^+} \ar[r]^{\mu} \ar[u]^{i^+} & {T} \ar[u]_{id} \ar[r] &  \{1\} \\
{}           & \{1\} \ar[u]                      & \{1\} \ar[u]\\
}
$$
with exact rows and columns.
By \cite[Thm.11.7]{Gr3}
and Hilbert 90
for the semi-local $\mathcal O$-algebra $A$ one has
$\textrm{H}^1_{\text{fppf}}(\mathcal O,T^+)= \textrm{H}^1_{\text{\'et}}(\mathcal O,T^+)= \textrm{H}^1_{\text{\'et}}(A,\Bbb G_{m,A})=\{* \}$.
So, the latter diagram gives rise to
a commutative diagram of pointed sets
$$
\xymatrix {
{}           & {}                            & {\textrm{H}^1_{\text{fppt}}(\mathcal O,G')} \ar[r]^{id}            & {\textrm{H}^1_{\text{fppt}}(\mathcal O,G')}            \\
{G^+(\mathcal O)} \ar[r]^{\mu^+_\mathcal O} & {T(\mathcal O)} \ar[r]^{\delta^+_\mathcal O}  & {\textrm{H}^1_{\text{fppt}}(\mathcal O,G)} \ar[r]^{j^+_*} \ar[u]^{\pi_*} & {\textrm{H}^1_{\text{fppt}}(\mathcal O,G^+)} \ar[u]^{\pi^+_*}  \\
{T^+(\mathcal O)} \ar[r]^{\mu_\mathcal O} \ar[u]^{i^+_*} & {T(\mathcal O)} \ar[r]^{\delta_\mathcal O} \ar[u]^{id} & {\textrm{H}^1_{\text{fppt}}(\mathcal O,Z)} \ar[r]^{\mu} \ar[u]^{i_*} & {\{*\}} \ar[u]^{i^+_*}  \\
{}           & {} & {G'(\mathcal O)} \ar[u]^{\delta_{\pi}}                      & {} \\
}
$$
with exact rows and columns. It follows that
$\pi^+_*$ has  trivial kernel and one has a chain of group isomorphisms
\begin{equation}
\label{F=T/mu_+(G_+)}
\textrm{H}^1_{\text{fppf}}(\mathcal O,Z) / Im (\delta_{\pi,\mathcal O})= ker (\pi_*)= ker (j^+_*) = T(\mathcal O)/\mu^+ (G^+(\mathcal O)).
\end{equation}
Clearly these isomorphisms respect $\mathcal O$-homomorphisms of semi-local $\mathcal O$-algebras.

The morphism $\mu^{+}: G^{+} \to T$ is a smooth $\mathcal O$-morphism of reductive
$\mathcal O$-group schemes, with the torus $T$. The kernel
$ker(\mu^{+})$
is equal to $G$ and $G$ is a reductive $\mathcal O$-group scheme.
%The functor
%$\mathcal O^{\prime} \mapsto T(\mathcal O^{\prime})/\mu^+ (G^+(\mathcal O^{\prime}))$
%satisfies purity for the regular semi-local $\mathcal O$-algebra $\mathcal O$
Now by Theorem
\ref{Aus_Buksbaum}
the sequence
(\ref{Aus_Buks_sequence})
is exact.
Thus the sequence
(\ref{Aus_Buks_sequence_2})
is exact too.
%Hence the functor
%$\mathcal O^{\prime} \mapsto \textrm{H}^1_{\text{fppt}}(\mathcal O^{\prime},Z) / Im (\delta_{\pi,\mathcal O^{\prime}})$
%satisfies purity for $\mathcal O$.

\end{proof}

\section{Proof of Theorem \ref{MainThmGeometric}}\label{sec:MainThmGeometric}
%\ref{MainHomotopy}}
\begin{proof}[Proof of the semi-simple case of Theorem \ref{MainThmGeometric}]
Let $\mathcal O$ and $G$ be the same as in Theorem \ref{MainThmGeometric} and assume additionally that $G$ is semi-simple.
We need to prove that
\begin{equation}
\label{Gr_SerreForGsemisimple}
ker[H^1_{\text{\'et}}(\mathcal O,G) \to H^1_{\text{\'et}}(K,G)]=* .
\end{equation}
Let
$G^{sc}_{}$
be the corresponding simply-connected semi-simple $\mathcal O$-group scheme.
\begin{clm}
\label{simplyconnectedsemisimpleinjective}
Under the hypotheses of Theorem \ref{MainThmGeometric} for all semi-simple reductive $\mathcal O$-group scheme $G$
the map
%\begin{equation}
%\label{Gr_SerreForG_der}
$H^1_{\text{\'et}}(\mathcal O,G^{sc}) \to H^1_{\text{\'et}}(K,G^{sc})$
%\end{equation}
is injective.
\end{clm}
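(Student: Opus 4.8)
The plan is to deduce injectivity from the already-available triviality of the kernel by the standard twisting argument, crucially exploiting that the hypothesis of Theorem~\ref{MainThmGeometric} is quantified over \emph{all} semi-simple simply connected $\mathcal O$-group schemes at once, hence over all inner forms of $G^{sc}$ as well. Since $G^{sc}$ is itself a semi-simple simply connected $\mathcal O$-group scheme, that hypothesis already gives $\ker[H^1_{\text{\'et}}(\mathcal O,G^{sc}) \to H^1_{\text{\'et}}(K,G^{sc})] = \ast$; what remains is to promote triviality of the kernel to injectivity of the map.

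So let $\xi_1,\xi_2 \in H^1_{\text{\'et}}(\mathcal O,G^{sc})$ have equal image in $H^1_{\text{\'et}}(K,G^{sc})$, and choose a $G^{sc}$-torsor $P$ over $\mathcal O$ representing $\xi_1$. Let ${}_{P}G^{sc}$ denote the inner twist of $G^{sc}$ by $P$, i.e. the contracted product of $P$ with $G^{sc}$ acting on itself by conjugation. Since $G^{sc}$ is smooth affine over $\mathcal O$, so is $P$, and ${}_{P}G^{sc}$ is again a smooth affine $\mathcal O$-group scheme, an inner form of $G^{sc}$. An inner form of a semi-simple simply connected group scheme is again semi-simple simply connected --- this can be checked on geometric fibres, over which an inner form is (non-canonically) isomorphic to the original group --- so ${}_{P}G^{sc}$ is itself a semi-simple simply connected $\mathcal O$-group scheme, and the hypothesis of Theorem~\ref{MainThmGeometric} applies with $H = {}_{P}G^{sc}$.

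Next recall the twisting bijection $\tau_P \colon H^1_{\text{\'et}}(\mathcal O,{}_{P}G^{sc}) \xrightarrow{\ \sim\ } H^1_{\text{\'et}}(\mathcal O,G^{sc})$ with $\tau_P(\ast) = \xi_1$ (cf. \cite[Ch.~I, \S5.3]{Se} in the Galois setting, or Giraud's nonabelian cohomology in the \'etale, equivalently fppf, setting, which here agree since $G^{sc}$ is smooth), which is functorial in the base ring and hence compatible with restriction along $\mathcal O \hookrightarrow K$. Put $\eta := \tau_P^{-1}(\xi_2)$. Applying $\tau_{P_K}$ to the restriction of $\eta$ to $K$ yields the restriction of $\xi_2$ to $K$, which by assumption equals the restriction of $\xi_1$, that is $\tau_{P_K}(\ast)$; since $\tau_{P_K}$ is a bijection, the restriction of $\eta$ to $H^1_{\text{\'et}}(K,{}_{P}G^{sc})$ is the trivial class. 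By the hypothesis applied to ${}_{P}G^{sc}$ the map $H^1_{\text{\'et}}(\mathcal O,{}_{P}G^{sc}) \to H^1_{\text{\'et}}(K,{}_{P}G^{sc})$ has trivial kernel, so $\eta = \ast$ and hence $\xi_2 = \tau_P(\eta) = \tau_P(\ast) = \xi_1$.

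The argument is formal once the two structural inputs are secured: that the inner twist ${}_{P}G^{sc}$ remains within the class of semi-simple simply connected $\mathcal O$-group schemes (so that the hypothesis of Theorem~\ref{MainThmGeometric} may be reinvoked for it), and that the twisting bijection exists and commutes with base change. Neither is difficult; the first is the only place where the full, uniformly quantified form of the hypothesis is really used, and it is the conceptual crux of this reduction.
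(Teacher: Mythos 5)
Your proof is correct and is essentially the paper's argument: both reduce injectivity to trivial kernel by twisting with a torsor representing one of the two classes and invoking the hypothesis for the resulting inner form, which is again semi-simple simply connected. The paper merely packages the same twisting step concretely, viewing $\underline{Iso}({}_{\xi}G^{sc},{}_{\zeta}G^{sc})$ as a torsor under the twisted group $G^{sc}(\zeta)$ that is trivial over $K$, which is exactly your $\tau_P$-translation.
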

In fact, let $\xi, \zeta \in H^1_{\text{\'et}}(\mathcal O,G^{sc})$ be two elements such that its images $\xi_K, \zeta_K$ in
$H^1_{\text{\'et}}(K,G^{sc})$
are equal. Let $_{\xi}G^{sc}$, $_{\zeta}G^{sc}$ be the corresponding principal $G^{sc}$-bundles over $\mathcal O$ and $G^{sc}(\zeta)$ be
the inner form of the $\mathcal O$-group scheme $G^{sc}$ corresponding to $\zeta$. The $\mathcal O$-scheme
$\underline {Iso}(_{\xi}G^{sc}, \  _{\zeta}G^{sc})$
is a principal
$G^{sc}(\zeta)$-bundle over $\mathcal O$,
which is trivial over $K$. Since $G^{sc}(\zeta)$ is simly-connected semi-simple reductive over $\mathcal O$,
the $\mathcal O$-scheme
$\underline {Iso}(_{\xi}G^{sc}, \ _{\zeta}G^{sc})$
has an $\mathcal O$-point by the hypotheses of
Theorem \ref{MainThmGeometric}. Whence the Claim.

To finish the proof of the semi-simple case of Theorem \ref{MainThmGeometric}
it remains to repeat literally
the arguments from the proof of the semi-simple case of
\cite[Sect. 11, Proof of Thm. 1.0.3]{Pa2}.
%\end{proof}

To continue the proof of Theorem \ref{MainThmGeometric}
we need the following claim, which is proved exactly
as the previous one.
\begin{clm}
\label{semisimpleinjective}
Under the hypotheses of Theorem \ref{MainThmGeometric} for all semi-simple reductive $\mathcal O$-group scheme $G$
the map
%\begin{equation}
%\label{Gr_SerreForG_der}
$H^1_{\text{\'et}}(\mathcal O,G) \to H^1_{\text{\'et}}(K,G)$
%\end{equation}
is injective.
\end{clm}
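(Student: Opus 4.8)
The plan is to reduce Claim \ref{semisimpleinjective} to Claim \ref{simplyconnectedsemisimpleinjective} together with the purity theorem \ref{PurityForSubgroup}, following exactly the template the author has just used for the simply connected case. So first I would fix two classes $\xi,\zeta\in H^1_{\text{\'et}}(\mathcal O,G)$ with $\xi_K=\zeta_K$, form the inner twist $G(\zeta)$ of $G$, and observe that the $\mathcal O$-scheme $\underline{Iso}({}_\xi G,{}_\zeta G)$ is a principal $G(\zeta)$-bundle over $\mathcal O$ that becomes trivial over $K$. Thus, exactly as before, the problem is: a principal bundle $E$ under a semi-simple $\mathcal O$-group scheme $H:=G(\zeta)$ which is rationally trivial must be trivial. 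The point is that now $H$ is semi-simple but not necessarily simply connected, so I cannot invoke the hypothesis of Theorem \ref{MainThmGeometric} directly.

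The key step is to pass to the simply connected cover. Let $H^{sc}\to H$ be the simply connected central cover, with kernel $Z\hra Cent(H^{sc})$, a finite $\mathcal O$-group scheme of multiplicative type, and $H'=H^{sc}/Z=H$. The short exact sequence $\{1\}\to Z\to H^{sc}\xrightarrow{\pi} H\to\{1\}$ gives, for each $\mathcal O$-algebra $R$, the boundary map $\delta_{\pi,R}\colon H(R)\to\textrm{H}^1_{\text{fppf}}(R,Z)$ and a long exact sequence of pointed sets relating $H^1_{\text{\'et}}(R,H^{sc})$, $H^1_{\text{\'et}}(R,H)$ and $\textrm{H}^2$-type cohomology of $Z$; more usefully, it identifies the fibre of $H^1_{\text{\'et}}(\mathcal O,H^{sc})\to H^1_{\text{\'et}}(\mathcal O,H)$ through a given class, and shows that the obstruction to lifting a class in $H^1_{\text{\'et}}(R,H)$ to $H^1_{\text{\'et}}(R,H^{sc})$ lives in $\textrm{H}^1_{\text{fppf}}(R,Z)/Im(\delta_{\pi,R})={\cal F}(R)$, which is precisely the functor appearing in Theorem \ref{PurityForSubgroup}. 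Since our bundle $E$ over $\mathcal O$ is trivial over $K$, its image in ${\cal F}(K)$ vanishes; chasing the diagram one checks the image of $E$ in ${\cal F}(\mathcal O)$ lies in the kernel of $\bigoplus_{\mathfrak p}$ (being trivial over each $\mathcal O_{\mathfrak p}$ as well, by Nisnevich-type local triviality — rationally trivial bundles under semi-simple groups over a DVR are trivial), hence by the exactness of (\ref{Aus_Buks_sequence_2}) the class of $E$ in ${\cal F}(\mathcal O)$ is zero. Therefore $E$ lifts to a class $\tilde E\in H^1_{\text{\'et}}(\mathcal O,H^{sc})$, which is rationally trivial, hence trivial by Claim \ref{simplyconnectedsemisimpleinjective} (applied to the appropriate inner form $H^{sc}$, which is itself simply connected semi-simple); pushing back down, $E$ is trivial, giving $\xi=\zeta$.

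Concretely, to "repeat literally the arguments'' I would: (1) record the exact sequence of pointed sets coming from (\ref{ZandGndGprime}) for $R=\mathcal O$, $R=K$, and $R=\mathcal O_{\mathfrak p}$, compatibly, using that $Cent(H^{sc})$ is of multiplicative type and $\pi$ is finite faithfully flat; (2) use Theorem \ref{NisnevichCor}-style injectivity (or Nisnevich's theorem for semi-simple groups over DVRs) to see that a rationally trivial $H$-bundle is locally trivial at every height one prime, so its class in ${\cal F}(\mathcal O)$ maps to $0$ in every summand $\bigoplus_{\mathfrak p}{\cal F}(K)/Im[{\cal F}(\mathcal O_{\mathfrak p})\to{\cal F}(K)]$; (3) invoke Theorem \ref{PurityForSubgroup} to conclude the class already dies in ${\cal F}(\mathcal O)$; (4) deduce a lift $\tilde E$ to $H^1_{\text{\'et}}(\mathcal O,H^{sc})$ and invoke Claim \ref{simplyconnectedsemisimpleinjective}; (5) push forward along $\pi_*$ to conclude $E$ is trivial, i.e. $\xi=\zeta$. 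The main obstacle is the diagram chase in step (1)–(3): one must be careful that the boundary maps are genuine group homomorphisms (as remarked via \cite[Ch.II, \S 5.6, Cor.2]{Se}), that the identification of the lifting obstruction with an element of ${\cal F}$ is functorial in $\mathcal O\to\mathcal O_{\mathfrak p}\to K$, and that "rationally trivial'' really forces vanishing in every local summand of (\ref{Aus_Buks_sequence_2}) — this last point uses that over a regular local ring of dimension one a rationally trivial torsor under a reductive group is trivial, i.e. the one-dimensional case of Grothendieck--Serre, which is classical. Once this is in place, Theorem \ref{PurityForSubgroup} does the rest and the semi-simple case of Theorem \ref{MainThmGeometric} follows.
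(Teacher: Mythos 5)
Your first paragraph is exactly the paper's route for this Claim: fix $\xi,\zeta$ with equal restrictions, pass to $E=\underline{Iso}({}_{\xi}G,{}_{\zeta}G)$, a rationally trivial torsor under the inner twist $H=G(\zeta)$, and reduce injectivity to a trivial-kernel statement for $H$. At this point the paper simply invokes the semi-simple case of Theorem \ref{MainThmGeometric}, which has already been established (via Claim \ref{simplyconnectedsemisimpleinjective} and the arguments of \cite[Sect.~11]{Pa2}) and applies to $H$ since $H$ is again semi-simple. Your remaining paragraphs instead try to re-derive that trivial-kernel statement inline through the simply connected cover and Theorem \ref{PurityForSubgroup}, and there the argument has genuine gaps.

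Concretely: the obstruction to lifting a class of $H^1_{\text{\'et}}(R,H)$ along $\pi\colon H^{sc}\to H$ does not live in ${\cal F}(R)=\textrm{H}^1_{\text{fppf}}(R,Z)/Im(\delta_{\pi,R})$; it lives in $\textrm{H}^2_{\text{fppf}}(R,Z)$. As the paper's own identification (\ref{F=T/mu_+(G_+)}) shows, ${\cal F}(R)$ is canonically $\ker[H^1(R,H^{sc})\to H^1(R,H)]$, i.e.\ it measures classes upstairs that die downstairs; there is no natural map $H^1(R,H)\to{\cal F}(R)$, so ``the image of $E$ in ${\cal F}(K)$'' and ``in ${\cal F}(\mathcal O)$'' are undefined, and the existence of a lift of $E$ to $H^1_{\text{\'et}}(\mathcal O,H^{sc})$ requires the injectivity of $\textrm{H}^2_{\text{fppf}}(\mathcal O,Z)\to\textrm{H}^2_{\text{fppf}}(K,Z)$ (purity for groups of multiplicative type), which Theorem \ref{PurityForSubgroup} does not supply and which you never address. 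Moreover your appeal to (\ref{Aus_Buks_sequence_2}) is inverted: that sequence is exact at the middle term ${\cal F}(K)$, so it descends everywhere-unramified classes from ${\cal F}(K)$ to ${\cal F}(\mathcal O)$; it cannot show that a class in ${\cal F}(\mathcal O)$ vanishing in all local quotients is zero. Finally, even once some lift $\tilde E$ exists, there is no reason it is rationally trivial; the correct use of purity is the opposite of yours: $\tilde E_K$ defines a class in $\ker[H^1(K,H^{sc})\to H^1(K,H)]\cong{\cal F}(K)$, triviality of $E$ over each $\mathcal O_{\mathfrak p}$ (Nisnevich) makes this class unramified, Theorem \ref{PurityForSubgroup} descends it to an element of $\textrm{H}^1_{\text{fppf}}(\mathcal O,Z)$, and one corrects $\tilde E$ by that element so that the corrected lift is rationally trivial, after which Claim \ref{simplyconnectedsemisimpleinjective} (for the relevant simply connected form) finishes. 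These missing steps are precisely what is packed into ``repeat literally the arguments of \cite[Sect.~11]{Pa2}''; as written, your diagram chase does not close.
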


%\begin{proof}
{\it The end of the proof of Theorem \ref{MainThmGeometric}}.
%Proof of Theorem \ref{MainThmGeometric}
Just repeat literally the arguments from the
respecting part of the proof of
\cite[Section 11]{Pa2}.
\end{proof}
%(assuming exactness of the
%sequence (\ref{Aus_Buks_sequence}) in the middle term)}.
%That exactness is proved in Section \ref{Purity}.
%The proof of equality
%(\ref{Gr_SerreForG})
%is reduced to the proof of
%the equality
%(\ref{Gr_SerreForG_der}).

\end{document}